\documentclass[11pt,a4paper]{article}
\usepackage{amsmath,amssymb,amsthm}

\newcommand{\bim}[3]{\mathord{\raisebox{-0.4ex}[0ex][0ex]{\scriptsize $#1$}{#2}\hspace{-0.2ex}\raisebox{-0.4ex}[0ex][0ex]{\scriptsize $#3$}}}

\newcommand{\lmo}[2]{\mathord{\raisebox{-0.4ex}[0ex][0ex]{\scriptsize $#1$}{#2}}}

\newcommand{\rZ}{\operatorname{Z}}
\newcommand{\rL}{\operatorname{L}}
\newcommand{\Out}{\operatorname{Out}}
\newcommand{\Ad}{\operatorname{Ad}}

\newcommand{\Aut}{\operatorname{Aut}}
\newcommand{\FAlg}{\operatorname{FAlg}}
\newcommand{\SL}{\operatorname{SL}}
\newcommand{\QN}{\operatorname{QN}}
\newcommand{\B}{\operatorname{B}}
\newcommand{\Tr}{\operatorname{Tr}}

\newcommand{\M}{\operatorname{M}}
\newcommand{\Rep}{\operatorname{Rep}}

\newcommand{\morph}{\operatorname{Mor}}
\newcommand{\Mor}{\operatorname{Mor}}

\newcommand{\irr}{\operatorname{Irr}}

\newcommand{\lspan}{\operatorname{span}}

\newcommand{\Comm}{\operatorname{Comm}}
\newcommand{\eL}{\operatorname{L}}
\newcommand{\Bimod}{\operatorname{Bimod}}

\newcommand{\cT}{\mathcal{T}}

\newcommand{\cM}{\mathcal M}
\newcommand{\cF}{\mathcal{F}}
\newcommand{\cL}{\mathcal{L}}
\newcommand{\cA}{\mathcal{A}}
\newcommand{\cG}{\mathcal{G}}
\newcommand{\cH}{\mathcal{H}}
\newcommand{\cU}{\mathcal{U}}

\newcommand{\cK}{\mathcal{K}}
\newcommand{\cZ}{\mathcal{Z}}
\newcommand{\cC}{\mathcal{C}}

\newcommand{\N}{\mathbb{N}}
\newcommand{\Z}{\mathbb{Z}}
\newcommand{\R}{\mathbb{R}}
\newcommand{\C}{\mathbb{C}}
\newcommand{\Q}{\mathbb{Q}}

\newcommand{\id}{\mathord{\text{\rm id}}}

\newcommand{\si}{\sigma}
\newcommand{\be}{\beta}
\newcommand{\al}{\alpha}
\newcommand{\om}{\omega}

\newcommand{\vphi}{\varphi}

\newcommand{\eps}{\epsilon}

\newcommand{\recht}{\rightarrow}

\newcommand{\actson}{\curvearrowright}

\newcommand{\droite}{\rightarrow}

\newcommand{\ot}{\otimes}
\newcommand{\otvN}{\overline{\otimes}}

\newcommand{\ox}{\overline{x}}

\setlength{\oddsidemargin}{0pt}
\setlength{\topmargin}{5pt}
\setlength{\textheight}{650pt}
\setlength{\textwidth}{470pt}
\setlength{\headsep}{10pt}
\setlength{\parindent}{0pt}
\setlength{\parskip}{1ex plus 0.5ex minus 0.2ex}

\newtheorem{theo}{Theorem}[section]
\newtheorem{prop}[theo]{Proposition}
\newtheorem{lemm}[theo]{Lemma}

{\theoremstyle{definition}
\newtheorem{defi}[theo]{Definition}
\newtheorem{rema}[theo]{Remark}
\newtheorem{exam}[theo]{Example}
}

\begin{document}

\begin{center}
{\LARGE \bf {The representation category of any compact group \vspace{0.5ex} is the bimodule category of a II$_\mathbf{1}$ factor}}

\bigskip

{\sc by S\'{e}bastien Falgui\`{e}res$^{(1,2)}$ and Stefaan Vaes$^{(1,2)}$\setcounter{footnote}{1}\footnotetext{Partially
    supported by ERC Starting Grant VNALG-200749 and Research
    Programme G.0231.07 of the Research Foundation --
    Flanders (FWO)}\setcounter{footnote}{2}\footnotetext{Department of Mathematics;
    K.U.Leuven; Celestijnenlaan 200B; B--3001 Leuven (Belgium).
    \\ E-mail: sebastien.falguieres@wis.kuleuven.be, stefaan.vaes@wis.kuleuven.be}}
\end{center}

\begin{abstract}
\noindent We prove that given any compact group $G$, there exists a minimal action of $G$ on a II$_1$ factor $M$ such that the bimodule category of the fixed-point II$_1$ factor $M^G$ is naturally equivalent with the representation category of $G$. In particular, all subfactors of $M^G$ with finite Jones index can be described explicitly.
\end{abstract}

\section{Introduction and statements of main results}

One of the richest invariants of a II$_1$ factor $P$ is the bimodule category $\Bimod(P)$ consisting of all $P$-$P$-bimodules $\bim{P}{\cH}{P}$ (see \cite{connes NCG,Popa correspondences}) of finite Jones index: $\dim(\lmo{P}{\cH}) < \infty$ and $\dim(\cH_P) < \infty$. Equipped with the Connes tensor product, $\Bimod(P)$ is a C$^*$-tensor category. Note that $\Bimod(P)$ contains both the fundamental group $\cF(P)$ and the outer automorphism group $\Out(P)$, because every $*$-isomorphism $\pi : P \recht pPp$ yields the bimodule $\bim{P}{\eL^2(P)p}{\pi(P)}$. More precisely, the group-like elements in $\Bimod(P)$ form an extension of $\cF(P)$ by $\Out(P)$. Moreover, $\Bimod(P)$ encodes, in a certain sense, all subfactors $P_0 \subset P$ of finite Jones index \cite{jones}: performing Jones' basic construction, we get $P_0 \subset P \subset P_1$ and obtain the $P$-$P$-bimodule $\bim{P}{\eL^2(P_1)}{P}$. As a result, it seemed until recently quite hopeless to explicitly compute $\Bimod(P)$ for any II$_1$ factor $P$.

But, in \cite{P1,P2,P3}, Sorin Popa obtained several breakthrough rigidity results for II$_1$ factors, which allowed in particular to compute invariants like $\cF(P)$ and $\Out(P)$ for concrete II$_1$ factors $P$. Without being exhaustive, we mention the following results: in \cite{P3}, Popa obtained the first II$_1$ factors having trivial fundamental group, while in \cite{P1}, he constructed examples with prescribed countable fundamental group. Very recently, Popa and the second author \cite{PV2,PV3} proved that the invariant $\cF(P)$ actually ranges over a large family of uncountable subgroups of $\R_+$. In \cite{IPP}, Ioana, Peterson and Popa proved the existence of II$_1$ factors $P$ such that $\Out(P)$ is any prescribed second countable compact abelian group. In particular, this settled the long standing open problem of the possible existence of II$_1$ factors only having inner automorphisms. The first concrete computations of $\Out(P)$ were given by Popa and the second author in \cite{PV1} and later refined in \cite{V2}. On the other hand, we proved in \cite{FV} that also all non-abelian compact groups arise as $\Out(P)$.

The II$_1$ factors studied in \cite{IPP} are amalgamated free products $M = M_0 *_N M_1$ (see Section \ref{sec.amal} for definitions). The main result of \cite{IPP} says that a von Neumann subalgebra $Q$ of $M$ having the property (T) of Connes and Jones (see \cite{CJ} and Section \ref{sec.T} below), or just having the relative property (T) in the sense of Popa \cite{P3}, must essentially be contained in either $M_0$ or $M_1$. In particular, if $M_0$ has itself property (T) and $M_1$ has, say, the Haagerup property, every automorphism of $M$ must preserve $M_0$ globally. This is the starting point to compute $\Out(M)$ in certain particular cases, leading to the above mentioned results of \cite{FV,IPP}.

In \cite{V1}, the scope of the methods of \cite{IPP} was enlarged so that in certain cases not only $\Out(M)$ but also $\Bimod(M)$ could actually be computed. The main result of \cite{V1} proves the existence of II$_1$ factors $M$ having trivial bimodule category and hence also trivial subfactor structure, trivial fundamental group and trivial outer automorphism group. Note however that the results in \cite{FV,IPP,V1} are existence theorems. The first concrete II$_1$ factors with trivial bimodule category were given in \cite{V2}, which included as well concrete examples of II$_1$ factors where $\Bimod(P)$ is a Hecke-like category.

We prove in this paper that the representation category of an arbitrary compact group $G$ can be realized as the bimodule category of a II$_1$ factor. More precisely, we prove the existence of a minimal action $G \actson M$ of $G$ on a II$_1$ factor $M$, such that the bimodule category $\Bimod(M^G)$ of the fixed point algebra $M^G$ can be identified with the representation category $\Rep(G)$. Note that $G \actson M$ is called minimal if $G \recht \Aut(M)$ is injective and the fixed point algebra $M^G$ has trivial relative commutant in $M$, i.e.\ $M \cap (M^G)' = \C 1$. Whenever $G \actson M$ is a minimal action, there is a natural embedding of $\Rep(G)$ into $\Bimod(M^G)$ (see \cite{roberts} and Section \ref{subsec.minimal}). The striking point is that there exist minimal actions such that this embedding is surjective (up to unitary equivalence). As in \cite{FV,IPP,V1}, our result is an existence theorem, involving a Baire category argument (Theorem \ref{Gdelta}).

As in \cite{FV,IPP}, the II$_1$ factor $M$ in the previous paragraph is of the form $M = M_0 *_N M_1$ and the action $G \actson M$ is such that $G$ acts trivially on $M_0$, leaves $M_1$ globally invariant and satisfies $M_1^G = N$. Our main theorem is the following.

\begin{theo}\label{thm.main}
Let $G$ be a second countable compact group. There exists a II$_1$ factor $M$ and a minimal action $G \actson M$ such that, writing $P := M^G$, every finite index $P$-$P$-bimodule is isomorphic with $\bim{P}{\Mor(H_\pi,\eL^2(M))}{P}$ for a uniquely determined finite dimensional unitary representation $\pi : G \recht \cU(H_\pi)$.

More precisely, $\Rep(G) \recht \Bimod(P) : \pi \mapsto \bim{P}{\Mor(H_\pi,\eL^2(M))}{P}$ defines an equivalence of C$^*$-tensor categories.
\end{theo}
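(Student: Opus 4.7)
The plan is to construct $M$ as an amalgamated free product $M = M_0 *_N M_1$, where $M_0$ is a property (T) II$_1$ factor containing an irreducible subfactor $N$, and $(M_1, G \actson M_1)$ is a II$_1$ factor with the Haagerup property equipped with a minimal action of $G$ satisfying $M_1^G = N$. The $G$-action is extended to $M$ by acting trivially on $M_0$. A direct check using the reduced-word decomposition of $\eL^2(M)$ yields $P := M^G = M_0$: since $G$ fixes $N = M_1^G$ pointwise and has no non-zero invariant vectors in $\eL^2(M_1) \ominus \eL^2(N)$, every reduced word of positive length contributes nothing to the $G$-invariants. The minimality conditions $M \cap M_0' = \C 1$ and injectivity of $G \recht \Aut(M)$ will be arranged through an appropriate generic construction of the embedding $N \hookrightarrow M_0$.

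For every $\pi \in \Rep(G)$, the space $\Mor(H_\pi, \eL^2(M))$ of $G$-equivariant bounded operators $H_\pi \to \eL^2(M)$ is a $P$-$P$-bimodule via the $M_0$-actions on $\eL^2(M)$, which commute with $G$ since $G$ fixes $M_0$ pointwise. Minimality of $G \actson M$ together with the classical theory of \cite{roberts} recalled in Section \ref{subsec.minimal} then show that $\pi \mapsto \Mor(H_\pi, \eL^2(M))$ is a fully faithful C$^*$-tensor functor $\Rep(G) \hookrightarrow \Bimod(P)$, matching representation tensor product with Connes tensor product and preserving conjugates. A central ingredient is the resulting spectral decomposition $\eL^2(M) = \bigoplus_{\pi \in \irr(G)} \Mor(H_\pi, \eL^2(M)) \otimes H_\pi$ as $P$-$P$-bimodule with $G$-representation on the right factors, which also ensures that each $\Mor(H_\pi, \eL^2(M))$ is of finite Jones index over $P$. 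This decomposition will be the backbone of the essential surjectivity argument.

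The main part of the proof is essential surjectivity. Given a finite-index $P$-$P$-bimodule $\cH$, I would extract a finite-index subfactor inclusion encoding $\cH$ via Jones' basic construction and embed it inside a larger amalgamated free product $\widetilde M$ built on the same scheme as $M$, adapting the strategy of \cite{V1,FV}. The rigidity provided by the IPP theorem \cite{IPP} is decisive: because $P$ has property (T) while the complement inside $\widetilde M$ exhibits Haagerup-type behaviour, the copy of $P$ inside $\widetilde M$ can be unitarily conjugated into $M_0$, forcing the new finite-index data encoding $\cH$ to live in a $G$-spectral subspace of $\eL^2(M)$, which by the decomposition above means $\cH \iso \Mor(H_\pi, \eL^2(M))$ for some finite-dimensional $\pi$. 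The hardest part will be to exclude spurious finite-index extensions of $P$ not coming from $\Rep(G)$, for instance those arising from extra subfactors inside $M_0$ itself or from untwisted automorphisms; this is exactly where the Baire category argument of Theorem \ref{Gdelta} intervenes, ensuring that for a dense $G_\delta$ set of choices of the ingredients $N \subset M_0$, no such extraneous extensions exist. Uniqueness of $\pi$ and the C$^*$-tensor equivalence then follow from the full faithfulness established in the previous paragraph.
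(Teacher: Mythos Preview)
Your central structural claim is incorrect: it is \emph{not} true that $P := M^G$ equals $M_0$. Your argument for this --- that reduced words of positive length contribute no $G$-invariants because $(M_1 \ominus N)^G = 0$ --- only works for words containing a \emph{single} letter from $M_1 \ominus N$. As soon as two or more $M_1$-letters appear, the diagonal $G$-action on the word is the tensor product of the corresponding spectral representations, and this tensor product typically contains the trivial representation. Concretely, for $\pi \in \irr(G)\setminus\{\eps\}$ and $a \in M_0$, the element
\[
(\xi^* \ot 1)\,V_\pi (1 \ot a) V_\pi^*\,(\eta \ot 1)
\]
is $G$-invariant, lies in $M$, and is a linear combination of reduced words of length $\geq 2$ (it involves coefficients of $V_\pi \in \B(H_\pi)\ot M_1$ on both sides of $a$); hence it belongs to $P$ but not to $M_0$. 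In the paper's notation these are exactly the elements $(\xi^*\ot 1)\psi_\pi(a)(\eta\ot 1)$, and Lemma~\ref{lem.step-4} shows they generate $P$. So $M_0 \subsetneq P$, the inclusion has infinite index, and $P$ does \emph{not} inherit property~(T) from $M_0$. This breaks the rigidity step as you have stated it: you cannot apply the IPP theorem to $P$ itself to conjugate it into $M_0$.

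The paper's route around this is substantially more delicate than your sketch. One does not embed the bimodule problem into a larger amalgamated free product $\widetilde M$; one works inside $M$ throughout. Given a finite index $P$-$P$-bimodule $\cH$, one first decomposes it into irreducible $M_0$-$P$-subbimodules and applies IPP (via the property~(T) subfactor $N_0$ with $N \subset N_0 \subset M_0$, not $M_0$ itself) to locate inside each piece a finite index $M_0$-$\psi_\eta(M_0)$-subbimodule for some $\eta \in \irr(G)$ (Lemmas~\ref{lem.step-1} and \ref{lem.step-2}). The crucial and hardest step (Lemma~\ref{lem.step-3}) is to show that any such subbimodule is actually the \emph{trivial} $M_0$-$M_0$-bimodule; this uses the freeness of the fusion subalgebras (assumption~4, arranged via Theorem~\ref{Gdelta}) together with an additional ``no finite-dimensional representations'' condition on $M_0$ (assumption~2(b), verified in Example~\ref{exam.our-example}). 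Only then, using Lemma~\ref{lem.step-4}, can one conclude that the original $\psi:P\to pP^np$ agrees with some $\psi_\eta$ on all of $P$. Your proposal identifies the right cast of ingredients (amalgamated free product, IPP rigidity, Baire category/freeness), but the mistaken identification $P=M_0$ hides precisely the part of the argument that requires real work.
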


Theorem \ref{thm.main} provides examples of II$_1$ factors for which all finite index bimodules over $P = M^G$ can be listed explicitly, labeled by the finite dimensional unitary representations of $G$. Since the category of finite index bimodules over $P$ encodes in a certain way all finite index subfactors of $P$, these can be explicitly listed as well. In particular, Jones' invariant \cite{jones}
$$\cC(P) := \{[P : P_0] \mid P_0 \subset P \;\text{irreducible, finite index subfactor}\}$$
can be explicitly computed for the II$_1$ factors $P = M^G$ given by Theorem \ref{thm.main}. The precise result goes as follows. We make use of Jones' tunnel construction \cite[Corollary 3.1.9]{jones} saying that for every finite index inclusion of II$_1$ factors $P \subset N$, there exists a finite index subfactor $P_0 \subset P$ such that $P_0 \subset P \subset N$ is the basic construction. Moreover, $P_0$ is uniquely determined up to unitary conjugacy in $P$.

\begin{theo}\label{thm.subfactors}
Let $G \overset{\sigma}{\actson} M$ be a minimal action of the second countable compact group $G$ on the II$_1$ factor $M$ and write $P = M^G$. Assume that $\sigma$ satisfies the conclusion Theorem \ref{thm.main}, meaning that every finite index $P$-$P$-bimodule is of the form
$\bim{P}{\Mor(H_\pi,\eL^2(M))}{P}$ for some finite dimensional unitary representation $\pi$ of $G$.

Whenever $G \overset{\alpha}{\actson} A$ is an action on the finite dimensional von Neumann algebra $A$ with $\cZ(A)^G = \C 1$, define the finite index subfactor $P(\alpha) \subset P$ such that $1 \ot P(\alpha) \subset 1 \ot P \subset (A \ot M)^{\al \ot \si}$ is the basic construction. Here $(\al \ot \si)_g := \al_g \ot \si_g$ and we note that $P(\alpha)$ is uniquely defined up to unitary conjugacy in $P$.
\begin{itemize}
\item Every finite index subfactor of $P$ is unitarily conjugate to one of the $P(\alpha)$.
\item $[P : P(\alpha)] = \dim A$ and $P(\alpha) \subset P$ is irreducible iff $A^G = \C 1$.
\item If $G \overset{\alpha}{\actson} A$ and $G \overset{\beta}{\actson} B$ satisfy $\cZ(A)^G = \C1$ and $\cZ(B)^G = 1$, then the subfactors $P(\alpha)$ and $P(\beta)$ of $P$ are unitarily conjugate in $P$ iff there exists a $*$-isomorphism $\pi : A \recht B$ satisfying $\beta_g \circ \pi = \pi \circ \alpha_g$ for all $g \in G$.
\end{itemize}
In particular, the set of index values of irreducible finite index subfactors of $P$ is given by
$$\cC(P) = \{ \dim(A) \mid A \;\text{finite dimensional von Neumann algebra}, \; G \actson A \; , \; A^G = \C1 \} \; .$$
\end{theo}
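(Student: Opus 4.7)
The plan is to transport the classification of finite index subfactors of $P$ to a classification in $\Rep(G)$, using the C$^*$-tensor equivalence $\Rep(G) \iso \Bimod(P)$ from Theorem~\ref{thm.main}. By general subfactor theory, a finite index subfactor $P_0 \subset P$ is determined, up to unitary conjugacy in $P$, by the $P$-$P$-bimodule $\bim{P}{\eL^2(P_1)}{P}$ (where $P_0 \subset P \subset P_1$ is the basic construction) together with the Q-system structure coming from the algebra $P_1$. Under the tensor equivalence, Q-systems in $\Bimod(P)$ correspond bijectively to Q-systems in $\Rep(G)$, namely finite-dimensional $G$-C$^*$-algebras $(A,\alpha)$; the condition that $P_1$ be a factor translates to $\cZ(A)^G = \C 1$. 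The task is to identify, for each such $(A,\alpha)$, the resulting subfactor of $P$ with the concrete $P(\alpha)$ given in the statement.

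\textbf{The concrete construction.} Set $N := (A \ot M)^{\alpha \ot \sigma}$. Since $\cZ(N) = \cZ(A \ot M)^G = \cZ(A)^G$, the algebra $N$ is a II$_1$ factor iff $\cZ(A)^G = \C 1$. The trace-preserving expectation $\tau_A \ot \id : A \ot M \recht 1 \ot M$ restricts to a conditional expectation $N \recht 1 \ot P$, and a Pimsner--Popa basis computation with an orthonormal basis of $A$ gives $[N : 1 \ot P] = \dim A$. Decomposing $A \iso \bigoplus_\pi K_\pi \ot H_\pi$ as a unitary $G$-representation (where $K_\pi$ is the multiplicity space), one obtains
\[
\bim{P}{\eL^2(N)}{P} = \bim{P}{(A \ot \eL^2(M))^G}{P} \iso \bigoplus_\pi K_\pi \ot \bim{P}{\Mor(H_{\bar\pi}, \eL^2(M))}{P},
\]
so by Theorem~\ref{thm.main} this bimodule corresponds to the $G$-representation $\bar A$. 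Since $P(\alpha) \subset P \subset N$ is a basic construction, this yields $[P : P(\alpha)] = [N : P] = \dim A$, while Jones' tunnel construction makes $P(\alpha)$ well defined up to unitary conjugacy in $P$. Minimality of $G \actson M$ forces $M \cap P' = \C 1$, hence $(1 \ot P)' \cap (A \ot M) = A$ and therefore $P(\alpha)' \cap P = P' \cap N = A^G$; this proves the irreducibility criterion.

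\textbf{Exhaustiveness and uniqueness.} For an arbitrary finite index subfactor $P_0 \subset P$, take the basic construction $P_0 \subset P \subset P_1$. The multiplication $\eL^2(P_1) \ot_P \eL^2(P_1) \recht \eL^2(P_1)$ and the unit inclusion $\eL^2(P) \hookrightarrow \eL^2(P_1)$ make $\bim{P}{\eL^2(P_1)}{P}$ a Q-system in $\Bimod(P)$, which Theorem~\ref{thm.main} transports to a Q-system in $\Rep(G)$, that is, to a finite-dimensional $G$-C$^*$-algebra $(A,\alpha)$ with $\cZ(A)^G = \C 1$. By the previous paragraph, the basic construction $P(\alpha) \subset P \subset N$ realises an isomorphic Q-system, so uniqueness of the Jones tunnel forces $P_0$ and $P(\alpha)$ to be unitarily conjugate in $P$. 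The third bullet follows in the same fashion: $P(\alpha)$ and $P(\beta)$ are unitarily conjugate iff their Q-systems are isomorphic, which in $\Rep(G)$ is exactly a $G$-equivariant $*$-isomorphism $(A,\alpha) \iso (B,\beta)$. The formula for $\cC(P)$ is then immediate from the index formula and the irreducibility criterion. The main subtlety is the careful transport of the Q-system structure through the tensor equivalence and the matching with the explicit $P(\alpha)$; this reduces to bookkeeping once one notes that the equivalence of Theorem~\ref{thm.main} automatically preserves tensor products, units and duals.
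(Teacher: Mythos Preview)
Your argument is correct and takes a genuinely different route from the paper. You invoke the Longo--Rehren type correspondence between finite index subfactors and Q-systems in $\Bimod(P)$, then transport through the tensor equivalence of Theorem~\ref{thm.main} and use the (well-known) identification of Q-systems in $\Rep(G)$ with finite-dimensional $G$-C$^*$-algebras. The paper instead argues concretely: given $P_0 \subset P \subset P_1$, it realizes $\bim{P}{\eL^2(P_1)}{P}$ as $\bim{\gamma(P)}{(\C^n \ot \eL^2(P))}{P}$ with $\gamma(a)=V(1\ot a)V^*$, extends $\gamma$ to $P_1$ using the left $P_1$-action, and then appeals to a separate lemma (Lemma~\ref{lemma.intermediate}) classifying intermediate algebras $1\ot P \subset N \subset (\M_n(\C)\ot M)^{\Ad\pi \ot \sigma}$ as $(A\ot M)^{\alpha\ot\sigma}$ for globally $G$-invariant $*$-subalgebras $A\subset \M_n(\C)$. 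The uniqueness bullet is handled similarly, by unpacking what a $*$-isomorphism $(A\ot M)^{\alpha\ot\sigma}\to (B\ot M)^{\beta\ot\sigma}$ fixing $1\ot P$ must look like.

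What each approach buys: your Q-system argument is conceptually cleaner and makes transparent why the classification holds, but it imports nontrivial external facts (the subfactor/Q-system correspondence and the description of Q-systems in $\Rep(G)$) that are not developed in the paper. The paper's approach is more self-contained --- Lemma~\ref{lemma.intermediate} is proved from scratch using only spectral-subspace manipulations --- and in effect reproves, in this specific setting, exactly the piece of the Q-system correspondence that is needed. Your ``bookkeeping'' remark about matching the Q-system of $P(\alpha)\subset P$ with the $G$-C$^*$-algebra $A$ is fair, but note that this is precisely the content the paper isolates in Lemma~\ref{lemma.intermediate}; if you want your proof to be self-contained at the level of the paper, that verification should be written out.
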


\subsection*{Acknowledgment}

The authors would like to thank the referee for carefully reading the paper and making several suggestions that have lead to a considerable improvement of the exposition.

\section{Preliminaries}

\subsection{The $*$-algebra of operators affiliated with a II$_1$ factor} \label{subsec.affiliated}

Let $M \subset \B(H)$ be a II$_1$ factor with normal tracial state $\tau$. Denote by $\cM$ the closed densely defined operators affiliated with $M$. By
\cite[Theorem XV, page 229]{MvN1}, we know that $\cM$ is a $*$-algebra, where sum and product are defined as the closure of sum and product on the natural domains and where the adjoint is the usual adjoint of operators. Denote by $\cM^+$ the positive self-adjoint operators affiliated with $M$. Then, $\tau$ has a natural extension to a positive-linear map $\cM^+ \recht [0,+\infty]$. Define, for $x \in \cM$, $|x| := (x^*x)^{1/2}$, $\|x\|_2 := \tau(x^*x)^{1/2}$ and $\|x\|_1 := \tau(|x|)$. Put
$$\rL^2(M) := \{x \in \cM \mid \|x\|_2 < \infty \} \quad\text{and}\quad \rL^1(M) := \{x \in \cM \mid \|x\|_1 < \infty \} \; .$$
Actually, $\rL^1(M)$ is the linear span of $\{x \in \cM^+ \mid \tau(x) <\infty\}$ and $\tau$ extends to a linear map $\rL^1(M) \recht \C$. Both $\rL^2(M)$ and $\rL^1(M)$ are stable under the adjoint and are $M$-$M$-bimodules. Finally, the product of two elements in $\rL^2(M)$ belongs to $\rL^1(M)$, the Cauchy-Schwartz inequality holds and the scalar product $\langle x,y\rangle := \tau(x^*y)$ turns $\rL^2(M)$ into a Hilbert space.

Every $x \in \cM$ has a unique polar decomposition, $x = u |x|$, where $u$ is a partial isometry in $M$ with $u^*u$ equal to the support projection of $|x|$. If $N \subset M$ is an irreducible subfactor, meaning that $N' \cap M = \C 1$, every element $x \in \cM$ satisfying $a x = x a$ for all $a \in N$, belongs to $\C 1$ as well. Indeed, if $x = u |x|$ is the polar decomposition of $x$, the uniqueness of the polar decomposition implies that $u$ and $|x|$ commute with all unitaries in $N$. Hence, $u$ is scalar. By the uniqueness of the spectral decomposition of $|x|$, all spectral projections of $|x|$ are scalar and hence, also $|x|$ follows scalar.

\subsection{Some notational conventions}

For any von Neumann algebra $M$ we denote $M^n:= \M_n(\C) \ot M$. We write $\C^n (\C^m)^*$ instead of $\B(\C^m,\C^n)$. We implicitly consider $\C^n (\C^m)^*$ as a Hilbert space with scalar product $\langle \xi,\eta \rangle = \Tr(\xi^* \eta)$. Obviously, $\C^n (\C^m)^*$ is an $\M_n(\C)$-$\M_m(\C)$-bimodule. We denote by $e_i \in \C^n$ the natural vectors. All $*$-homomorphisms between von Neumann algebras are implicitly assumed to be normal.

\subsection{The bimodule category and fusion algebra of a II$_1$ factor} \label{subsec.bimodule}

Let $(M,\tau)$ be a von Neumann algebra with faithful normal tracial state and $\cH_M$ a right Hilbert $M$-module. There exists a projection $p \in \B(\ell^2(\N)) \otvN M$ such that $\cH_M \cong p \big ( \ell^2(\N) \otvN \eL^2(M) \big )_M$ and this projection $p$ is uniquely defined up to equivalence of projections in $\B(\ell^2(\N)) \otvN M$. We denote $\dim(\cH_M) := (\Tr \ot \tau)(p)$. Observe that the number $\dim(\cH_M)$ depends on the choice of tracial state $\tau$ in the non-factorial case. An $N$-$M$-bimodule $\bim{N}{\cH}{M}$ is said to be of \emph{finite Jones index} if $\dim( \lmo{N}{\cH}) < \infty$ and $\dim(\cH_M) < \infty$. In particular, the \emph{Jones index} of a subfactor $N \subset M$ is defined as $[M : N]:= \dim (\eL^2(M)_N)$, see \cite{jones}.

We constantly use the following well known principle: if $N \subset M$ is a finite index subfactor and $Q \subset N$ a von Neumann subalgebra such that $Q' \cap N$ is finite dimensional, then also $Q' \cap M$ is finite dimensional (see e.g.\ \cite[Lemma A.3]{V2}).

Let $M,N,P$ be von Neumann algebras with faithful normal tracial states and fix bimodules $\bim{M}{\cH}{N}$ and $\bim{N}{\cK}{P}$. We briefly recall the construction of the \emph{Connes tensor product} $\bim{M}{(\cH \ot_N \cK)}{P}$ and refer to \cite[V.Appendix B]{connes NCG} for details. Denote by $\cH^0$ the set of vectors $\xi \in \cH$ such that the linear map $N \recht \cH : a \mapsto \xi a$ extends to a bounded operator $L_\xi : \rL^2(N) \recht \cH$. Then, $\cH^0$ is a dense subspace of $\cH$. One defines an $N$-valued scalar product on $\cH^0$ by setting $\langle \xi,\eta \rangle_N := L_\xi^* L_\eta$. The Connes tensor product $\cH \ot_N \cK$ is defined as the separation and completion of the algebraic tensor product $\cH^0 \ot_{\text{\rm alg}} \cK$ for the scalar product
$$\langle a \ot \xi , b \ot \eta \rangle:= \langle \xi , \langle a, b\rangle_N \eta \rangle \; . $$
The Hilbert space $\cH \ot_N \cK$ is turned into an $M$-$P$-bimodule in the following way:
$$a \cdot (b \ot \xi) = ab \ot \xi \quad\text{and}\quad (b \ot \xi) \cdot a = b \ot(\xi a) \; .$$

Whenever $p \in \B(\ell^2(\N)) \otvN N$ is a projection and $\psi : P \recht p(\B(\ell^2(\N)) \otvN N)p$ is a $*$-homomorphism, define the $N$-$P$-bimodule $H(\psi)$ on the Hilbert space $(\ell^2(\N)^* \ot \rL^2(N))p$ with left and right module actions given by
$$a \cdot \xi := a \xi \quad\text{and}\quad \xi \cdot b = \xi \psi(b) \; .$$
Every $N$-$P$-bimodule is isomorphic with an $N$-$P$-bimodule of the form $H(\psi)$. Furthermore, if $\psi : P \recht p(\B(\ell^2(\N)) \otvN N)p$ and $\eta : P \recht q(\B(\ell^2(\N)) \otvN N)q$, then $\bim{N}{H(\psi)}{P} \cong \bim{N}{H(\eta)}{P}$ if and only if there exists $u \in \B(\ell^2(\N)) \otvN N$ satisfying $u u^* = p$, $u^* u = q$ and $\psi(a) = u \eta(a) u^*$ for all $a \in P$.

Whenever $\bim{M}{\cH}{N}$ is an $M$-$N$-bimodule, the Connes tensor product $\bim{M}{(\cH \ot_N H(\psi))}{P}$ is isomorphic with the $M$-$P$-bimodule defined on the Hilbert space $(\ell^2(\N)^* \ot \cH)p$ with left and right module actions given by $a \cdot \xi = a \xi$ and $\xi \cdot b = \xi \psi(b)$. In particular, $H(\rho) \ot_N H(\psi) \cong H((\id \ot \rho)\psi)$.

In the previous two paragraphs, one can analogously describe bimodules by homomorphisms on the left: $\bim{M}{\cH}{N} \cong \bim{\vphi(M)}{p(\ell^2(\N) \ot \rL^2(N))}{N}$ for a $*$-homomorphism $\vphi : M \recht p (\B(\ell^2(\N) \otvN N)) p$.

The \emph{contragredient} of an $M$-$N$-bimodule $\bim{M}{\cH}{N}$ is defined on the conjugate Hilbert space $\cH^*$ with bimodule actions given by $a \cdot \xi^* := (\xi a^*)^*$ and $\xi^* \cdot b := (b^* \xi)^*$.

From now on, fix a II$_1$ factor $M$. The category $\Bimod(M)$ consists of all finite index $M$-$M$-bimodules, with morphisms given by the $M$-$M$-bimodular maps. We refer to \cite{bisch} for background material and results on bimodules and fusion algebras, in particular in relation with subfactors. Every finite index $M$-$M$-bimodule is isomorphic with an $H(\psi)$ for some finite index inclusion $\psi : M \recht p M^n p$.

One also defines the \emph{fusion algebra} $\FAlg(M)$ of $M$ as the set of finite index $M$-$M$-bimodules modulo unitary equivalence.
We recall that an abstract fusion algebra $\cA$ is a free $\N$-module $\N[\cG]$ equipped with the following additional structure:
\begin{itemize}
\item an associative and distributive product operation, and a multiplicative unit element $e \in \cG$,
\item an additive, anti-multiplicative, involutive map $x \mapsto \overline{x}$, called \emph{conjugation},
\end{itemize}
satisfying Frobenius reciprocity: defining the numbers $m(x,y;z) \in \N$ for $x,y,z \in \cG$ through the formula
$$x y = \sum_z m(x,y;z) z\; ,$$
one has $m(x,y;z) = m(\ox,z;y) = m(z,\overline{y}; x)$ for all $x,y,z \in \cG$.

The base $\cG$ of the fusion algebra $\cA$ is canonically determined: these are exactly the non-zero elements of $\cA$ that cannot be expressed as the sum of two non-zero elements. The elements of $\cG$ are called the \emph{irreducible elements} of the fusion algebra $\cA$.

Two examples of fusion algebras arise as follows.
\begin{itemize}
\item Let $\Gamma$ be a group and define $\cA = \N[\Gamma]$.
\item Let $G$ be a compact group and define the fusion algebra $\Rep(G)$ as the set of equivalence classes of finite dimensional unitary representations of $G$. The operations on $\Rep(G)$ are given by direct sum and tensor product of representations.
\end{itemize}

We end with the following probably well known lemma. For convenience, we give a proof.

\begin{lemm}\label{lem.finite-dim}
Let $N \subset M$ be an irreducible inclusion of II$_1$ factors and $\bim{M}{\cK}{M}$ a finite index $M$-$M$-bimodule. Whenever $\bim{N}{\cH}{N}$ is a finite index $N$-$N$-bimodule, the vector space of bounded $N$-$N$-bimodular operators from $\cH$ to $\cK$, is finite dimensional.
\end{lemm}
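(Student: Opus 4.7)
The plan is to reduce the claim, via Frobenius reciprocity, to the finite dimensionality of two relative commutants controlled by the hypothesis $N' \cap M = \C$ and the finite-index commutant principle recalled just before the lemma.

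First I would realize the bimodules concretely: write $\cK \cong H(\psi)$ for a finite index inclusion $\psi : M \recht pM^np$, and $\overline{\cH} \cong H(\eta)$ for a finite index inclusion $\eta : N \recht qN^\ell q$. Since $\overline{\cH}$ has finite index, Frobenius reciprocity identifies
\[
\Mor_{N-N}(\cH,\cK) \;\cong\; (\cK \ot_N \overline{\cH})^N,
\]
the space of $N$-$N$-central vectors. The formula $H(\rho) \ot_N H(\psi) \cong H((\id \ot \rho)\psi)$ from Section~\ref{subsec.bimodule} gives $\cK \ot_N \overline{\cH} \cong H(\pi)$ with $\pi := (\id \ot \psi|_N) \circ \eta : N \recht \tilde{p}\, M^{n\ell}\, \tilde{p}$ and $\tilde{p} := (\id \ot \psi|_N)(q)$. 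Under this identification the space of $N$-$N$-central vectors becomes
\[
\cI := \{\, v \in \rL^2(M^{n\ell}) : v\tilde{p} = v \text{ and } \iota(a)\,v = v\,\pi(a) \text{ for all } a \in N \,\},
\]
where $\iota : a \mapsto 1 \ot a$ embeds $M$ diagonally into $M^{n\ell}$.

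Next I would exploit the two-sided intertwining structure of $\cI$. Taking adjoints in the defining relation yields $v^* \iota(a) = \pi(a)\,v^*$, so for any $v,w \in \cI$ one has $vw^* \in \iota(N)' \cap M^{n\ell}$ and $v^*w \in \pi(N)' \cap \tilde{p}\,M^{n\ell}\,\tilde{p}$. Using $N' \cap M = \C$ we compute $\iota(N)' \cap M^{n\ell} = \M_{n\ell}(\C) \ot (N' \cap M) = \M_{n\ell}(\C)$, which is finite dimensional; in particular every $v \in \cI$ is bounded, so $\cI \subset M^{n\ell}$. The commutant $\pi(N)' \cap \tilde{p}\,M^{n\ell}\,\tilde{p}$ is shown to be finite dimensional by applying the principle preceding the lemma: first, $\psi(N)' \cap pM^np$ is finite dimensional because $\psi(M) \subset pM^np$ has finite index and $\psi(N)' \cap \psi(M) = \psi(N'\cap M) = \C p$; combined with the finite index of $\eta$ and an analogous corner argument for $\eta(N)' \cap qM^\ell q$, one then obtains the finite dimensionality of $\pi(N)' \cap \tilde{p}\,M^{n\ell}\,\tilde{p}$ by applying the principle to the chain $\pi(N) \subset (\id \ot \psi)(qM^\ell q) \subset \tilde{p}\,M^{n\ell}\,\tilde{p}$, whose outer inclusion has finite index (coming from $\psi(M) \subset pM^np$ by tensoring with $\M_\ell(\C)$ and cutting by $\tilde{p}$).

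Setting $A := \iota(N)' \cap M^{n\ell}$ and $B := \pi(N)' \cap \tilde{p}\,M^{n\ell}\,\tilde{p}$, both finite dimensional, $\cI$ becomes an $A$-$B$-bimodule under left and right multiplication, equipped with faithful $A$- and $B$-valued sesquilinear forms $\langle v,w\rangle_A = vw^*$ and $\langle v,w\rangle_B = v^*w$. Fix a minimal projection $e \in A$; for any family $\{v_i\} \subset e\,\cI$ with $v_iv_j^* = \delta_{ij}\,e$, the projections $v_i^*v_i \in B$ are pairwise orthogonal, since $(v_i^*v_i)(v_j^*v_j) = v_i^*(v_iv_j^*)v_j = 0$ for $i \neq j$. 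As $B$ is finite dimensional this forces $\dim(e\,\cI) \le \dim B$, and summing over a maximal orthogonal family of minimal projections in $A$ yields $\dim \cI < \infty$. The main obstacle is the verification that $\pi(N)' \cap \tilde{p}\,M^{n\ell}\,\tilde{p}$ is finite dimensional, since this is where the irreducibility $N' \cap M = \C$ and the finite index hypotheses on $\psi$ and $\eta$ must be combined carefully through the commutant principle.
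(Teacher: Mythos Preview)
Your Frobenius-reciprocity reduction to an intertwiner space is fine and essentially equivalent to the paper's direct identification of $\Mor_{N\text{-}N}(\cH,\cK)$ with $\cL=\{T\in p(\C^n(\C^m)^*\ot M)q:\psi(a)T=T\gamma(a)\}$. The gap is in your justification that $B:=\pi(N)'\cap\tilde p\,M^{n\ell}\,\tilde p$ is finite dimensional. Your chain argument correctly reduces this to showing that $\eta(N)'\cap qM^\ell q$ is finite dimensional, but the ``analogous corner argument'' does not go through: the commutant principle needs the \emph{outer} inclusion to have finite index, and here the only candidate is $qN^\ell q\subset qM^\ell q$, which has infinite index since $N\subset M$ is not assumed to be of finite index. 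There is no analogue of your step $\psi(N)'\cap\psi(M)=\C p$ on the $\eta$-side, because $\eta$ is defined only on $N$, not on $M$.

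The paper avoids this entirely: it never claims any commutant on the $\gamma$-side is finite dimensional. It uses only the finite dimensionality of the left-side commutant $A=\psi(N)'\cap pM^np$ (which \emph{does} follow from the principle, via $\psi(M)\subset pM^np$ finite index), picks minimal projections $p_i\in A$, and then observes that the right-side projections $v_{ij}^*v_{ij}$ are mutually orthogonal in the II$_1$ factor $qM^mq$ and all have the \emph{same trace} $\Tr(p_i)$. Finiteness of the trace bounds their number. Your argument becomes correct if you replace ``$B$ is finite dimensional'' by this trace bound on orthogonal equi-trace projections in a finite factor; in fact that is exactly what is needed to cap the size of your orthonormal family $\{v_i\}$.

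A minor convention slip: with $H(\pi)$ realized on row vectors as in Section~\ref{subsec.bimodule}, for $v,w\in\cI$ the product $vw^*$ lies in $\rL^1(M)$ and commutes with $N$, so $vw^*\in\C$; thus your $A$ should be $\C$, not $\M_{n\ell}(\C)$. This does not affect the logic, but it explains why all the work has to happen on the $B$-side.
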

\begin{proof}
Write $\bim{M}{\cK}{M} \cong \bim{\psi(M)}{p(\C^n \ot \rL^2(M))}{M}$ and $\bim{N}{\cH}{N} \cong \bim{\gamma(N)}{q(\C^m \ot \rL^2(N))}{N}$ for some finite index inclusions $\psi : M \recht p M^n p$ and $\gamma : N \recht qN^m q$. Define
$$\cL = \{T \in p(\C^n (\C^m)^* \ot M) q \mid \psi(a) T = T \gamma(a) \quad\text{for all}\;\; a \in N \} \; .$$
For every $T \in \cL$, left multiplication by $T$ defines an $N$-$N$-bimodular map $q(\C^m \ot \rL^2(N)) \recht p(\C^n \ot \rL^2(M))$. Conversely, let $\theta : q(\C^m \ot \rL^2(N)) \recht p(\C^n \ot \rL^2(M))$ be a bounded $N$-$N$-bimodular operator. Define $T \in
p(\C^n (\C^m)^* \ot \rL^2(M)) q$ by the formula
$$T = \sum_{i=1}^m \theta(q(e_i \ot 1)) (e_i^* \ot 1) \; .$$
It follows that $\psi(a) T = T \gamma(a)$ for all $a \in N$. Define $A := pM^n p \cap \psi(N)'$. Since $N \subset M$ is irreducible and $\psi(M) \subset pM^n p$ has finite index, $A$ is finite dimensional. The operator $TT^*$ belongs to $p (\M_n(\C) \ot \rL^1(M)) p$ and commutes with $\psi(N)$. Hence, $TT^*$ is affiliated with $A$ and in particular, bounded. Hence, $T \in \cL$.

So, we have to prove that $\cL$ is finite dimensional. Let $p_1,\ldots,p_r$ be a maximal set of mutually orthogonal minimal projections in $A$ for which there exist $v_i \in \cL$ with $v_i v_i^* = p_i$. For every $i$, let $q_{i1},\ldots,q_{is_i}$ be a maximal set of mutually orthogonal projections in $qM^m q$ for which there exist $v_{ij} \in \cL$ satisfying $v_{ij} v_{ij}^* = p_i$ and $v_{ij}^* v_{ij} = q_{ij}$. Since $\Tr(q_{ij}) = \Tr(p_i)$ for every $j$, it follows that $s_i < \infty$. It is now easy to check that
$$\cL = \lspan \{v_{ij} \mid i=1,\ldots,r, j = 1,\ldots,s_i\} \; .$$
\end{proof}

\subsection{Connes tensor product versus product in a given module}

For the convenience of the reader, we prove the following elementary and probably well known lemma. It will be used several times in this paper.

\begin{lemm}\label{lem.product}
Let $N \subset M$ be an irreducible inclusion of II$_1$ factors. Suppose that $\cK \subset \rL^2(M)$ is an $N$-$N$-subbimodule of finite index.

\begin{enumerate}
\item Choose a projection $p \in N^n$, a finite index inclusion $\vphi : N \recht pN^n p$ and an $N$-$N$-bimodular unitary
$$U : \bim{\vphi(N)}{p (\C^n \ot \rL^2(N))}{N} \recht \bim{N}{\cK}{N} \; .$$
Then, $U \bigl(p (\C^n \ot N)\bigr) = \cK \cap M$ and defining $v \in (\C^n)^* \ot M$ by the formula
$$v := \sum_{i=1}^n e_i^* \ot v_i \quad\text{with}\quad v_i := U(p(e_i \ot 1)) \; ,$$
we have $v p = v$, $a v = v \vphi(a)$ for all $a \in N$ and $U(\xi) = v \xi$ for all $\xi \in p (\C^n \ot \rL^2(N))$. In particular,
$\cK \cap M = \lspan\{ v_i N \mid i=1,\ldots,n\}$ and $\cK \cap M$ is dense in $\cK$.
\item Let $P$ be a II$_1$ factor and $\bim{M}{\cH}{P}$ an $M$-$P$-bimodule. Suppose that $\cL \subset \cH$ is a closed $N$-$P$-subbimodule.
Denote by $\cK * \cL$ the closure of $(\cK \cap M) \cL$ inside $\cH$. Then, $\cK * \cL$ is an $N$-$P$-bimodule that is isomorphic to a subbimodule of $\cK \ot_N \cL$. Furthermore, whenever $\cK_0 \subset \cK \cap M$ is such that $\cK_0 \subset \cK$ is dense, also $\cK_0 \cL$ follows dense in $\cK * \cL$. If $\cK * \cL$ is non-zero and $\cK \ot_N \cL$ is irreducible, it follows that $\cK * \cL$ and $\cK \ot_N \cL$ are isomorphic $N$-$P$-bimodules.
\end{enumerate}
By symmetry, similar statements hold on the right. In particular, whenever $\bim{P}{\cH}{M}$ is a $P$-$M$-bimodule with closed $P$-$N$-subbimodule $\cL$, we define $\cL * \cK$ as the closure of $\cL (\cK \cap M)$ inside $\cH$ and find that $\cL * \cK$ is isomorphic with a $P$-$N$-subbimodule of $\cL \ot_N \cK$.
\end{lemm}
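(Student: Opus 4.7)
The plan is to realize the unitary $U$ of Part 1 as multiplication by a ``coefficient vector'' $v \in (\C^n)^* \ot M$ and read the lemma off from the algebraic identities $vp = v$, $av = v\vphi(a)$ for $a \in N$, and $E_N(v^*v) = p$. I set $v_i := U(p(e_i \ot 1)) \in \cK \subset \rL^2(M)$ and $v := \sum_i e_i^* \ot v_i$. Using the bimodularity of $U$ together with the expansion $p(e_i \ot 1) = \sum_j e_j \ot p_{ji}$, a short direct computation gives $vp = v$ and $av = v\vphi(a)$. Taking adjoints yields $v^*a = \vphi(a) v^*$, so $vv^* \in \cM^+$ commutes with $N$; by the irreducibility criterion of Section \ref{subsec.affiliated} every such element is scalar, so $vv^* \in \C 1$, and each $v_i v_i^* \le vv^*$ is bounded; hence $v \in (\C^n)^* \ot M$.

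Once $v$ is bounded, $U(\xi) = v\xi$ holds on the dense subspace $p(\C^n \ot N)$ by bimodularity and extends by continuity, giving $U(p(\C^n \ot N)) = \lspan\{v_i N\} \subset \cK \cap M$ and density of $\cK \cap M$ in $\cK$. The isometry property of $U$ translates, after standard $\tau$- and $E_N$-manipulations, into $E_N(v^*v) = p$ in $N^n$. Hence for $x \in \cK \cap M$ the preimage $U^{-1}(x)$ has $i$-th component $E_N(v_i^* x) \in N$, which shows $U^{-1}(\cK \cap M) \subset p(\C^n \ot N)$ and completes Part 1. For Part 2 I reuse the same $v$ to define $V : p(\C^n \ot \cL) \recht \cH$, $V(\zeta) := \sum_i v_i \zeta_i$, viewing each $v_i \in M$ as an operator on $\cL \subset \cH$. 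Bimodularity of $V$ is immediate from $av = v\vphi(a)$, and under the canonical identification $\cK \ot_N \cL \cong p(\C^n \ot \cL)$ coming from $\rL^2(N) \ot_N \cL \cong \cL$, the map $V$ becomes a bounded $N$-$P$-bimodular map $\cK \ot_N \cL \recht \cH$; a short computation shows that it has dense image in $\cK * \cL$, since it contains every $v_i \eta = V(p(e_i \ot \eta))$ and the closed span of these vectors is exactly $\cK * \cL$.

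The structural assertions now follow from standard bimodule bookkeeping. The kernel of $V$ is a closed $N$-$P$-subbimodule of $\cK \ot_N \cL$; orthogonally decomposing $\cK \ot_N \cL = \ker V \oplus (\ker V)^\perp$ and polar-decomposing $V$ on $(\ker V)^\perp$ produces a unitary bimodular isomorphism between $\cK * \cL$ and the closed subbimodule $(\ker V)^\perp \subset \cK \ot_N \cL$. The irreducibility assertion then follows immediately: if $\cK \ot_N \cL$ is irreducible and $\cK * \cL \ne 0$, the subbimodule $\ker V$ is proper and must therefore vanish, so $V$ itself gives a bimodular unitary $\cK \ot_N \cL \cong \cK * \cL$. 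The density statement for $\cK_0$ is the step I expect to be the main technical obstacle: an $\rL^2$-approximation $\xi_k \to v_j$ inside $\cK_0$ does not produce $\cH$-convergence of products $\xi_k \eta \to v_j \eta$, because the $M$-operator norm is not controlled by $\|\cdot\|_2$. The correct approach is to transport the density through $V$, using the decomposition $\zeta = \sum_j p(e_j \ot 1) \cdot \zeta_j$ of any $\zeta \in p(\C^n \ot \cL)$ together with the $N$-bimodular structure of $\cK_0$, to reduce to an approximation argument directly inside the Connes tensor product.
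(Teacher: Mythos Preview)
Your argument for Part 1 and for the subbimodule and irreducibility claims in Part 2 is correct and essentially identical to the paper's: define $v$ from the images of the basis vectors, use irreducibility of $N \subset M$ to force $vv^*$ scalar and hence $v$ bounded, then realize both $U$ and the map $V : p(\C^n \ot \cL) \to \cH$ as left multiplication by $v$ and take the polar decomposition of $V$. One small correction: when $\cK \ot_N \cL$ is irreducible and $\ker V = 0$, it is not $V$ itself that is unitary but its polar part (equivalently, $V^*V$ is a positive bimodular endomorphism of an irreducible bimodule, hence scalar).

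The genuine gap is exactly where you flag it: the density of $\cK_0 \cL$ in $\cK * \cL$. Your proposed fix --- ``transport the density through $V$'' and ``reduce to an approximation argument inside the Connes tensor product'' --- does not supply the missing idea. The difficulty you correctly identified (that $\|\cdot\|_2$-approximation in $\cK$ does not control products in $\cH$) is not circumvented by passing to $p(\C^n \ot \cL)$: the map $V$ is built from the \emph{fixed} bounded vectors $v_i$, so it tells you nothing about how a general element of $\cK_0$ acts on $\cL$. What is needed is a bound in the other variable. The paper's device is to introduce the dense subspace $\cL_0 \subset \cL$ of \emph{left $N$-bounded} vectors, i.e.\ those $\xi$ for which $a \mapsto a\xi$ extends to a bounded map $\rL^2(N) \to \cH$. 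For such $\xi$ one checks (using $\cK \cap M = \{v\eta : \eta \in p(\C^n \ot N)\}$ and the boundedness of $e_i \ot a \mapsto v_i a \xi$ on $\C^n \ot \rL^2(N)$) that the map $\cK \cap M \to \cH$, $a \mapsto a\xi$, extends to a bounded operator on all of $\cK$. This is precisely what converts $\rL^2$-density of $\cK_0$ in $\cK$ into $\cH$-density of $\cK_0 \xi$ in $(\cK \cap M)\xi$. One then chains
\[
\overline{(\cK \cap M)\cL} = \overline{(\cK \cap M)\cL_0} = \overline{\cK_0 \cL_0} = \overline{\cK_0 \cL},
\]
where the outer equalities use that elements of $\cK \cap M$ (resp.\ $\cK_0$) are bounded operators and $\cL_0$ is dense in $\cL$, and the middle equality uses the extension just described. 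Without introducing $\cL_0$ (or an equivalent boundedness statement), your argument for this step is incomplete.
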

\begin{proof}
Choose a projection $p \in N^n$, a finite index inclusion $\vphi : N \recht pN^n p$ and an $N$-$N$-bimodular unitary
$$U : \bim{\vphi(N)}{p (\C^n \ot \rL^2(N))}{N} \recht \bim{N}{\cK}{N} \; .$$
Define $v_i \in \rL^2(M)$ by the formula $v_i := U(p(e_i \ot 1))$. Put $v := \sum_{i=1}^n e_i^* \ot v_i$, which belongs to $(\C^n)^* \ot \rL^2(M)$. By construction, $\lspan\{ v_i N \mid i=1,\ldots,n\}$ is dense in $\cK$ and $U(\xi) = v\xi$ for all $\xi \in p(\C^n \ot N)$. Since $U$ is $N$-$N$-bimodular, we have $a v = v \vphi(a)$ for all $a \in N$ and, in particular, $v = vp$.  It follows that $v v^*$ is an element of $\rL^1(M)$ commuting with $N$. By the irreducibility of $N \subset M$, we get $vv^* \in \C 1$. In particular, $v$ is bounded and $v_i \in M$ for every $i$.

Since $U(\xi) = v \xi$ for all $\xi \in p(\C^n \ot N)$, also $U^*(b) = (\id \ot E_N)(v^* b)$ for all $b \in M$. In particular, $U^*(M) \subset p(\C^n \ot N)$ and it follows that $\cK \cap M = U(p (\C^n \ot N)) = \lspan\{ v_i N \mid i=1,\ldots,n\}$. This proves the first part of the lemma.

To prove the second part, let $P$ be a II$_1$ factor and $\bim{M}{\cH}{P}$ an $M$-$P$-bimodule with closed $N$-$P$-subbimodule $\cL$.
Define $\cK * \cL$ as the closure of $(\cK \cap M) \cL$ inside $\cH$. Define the subspace $\cL_0 \subset \cL$ of vectors $\xi \in \cL$ such that the map $N \recht \cL : a \mapsto a \xi$ extends to a bounded operator from $\rL^2(N)$ to $\cL$. Then, $\cL_0$ is dense in $\cL$. Fix $\xi \in \cL_0$. We claim that the map $R : \cK \cap M \recht \cH : a \mapsto a \xi$ extends to a bounded operator from $\cK$ to $\cH$. Since $\xi \in \cL_0$, the map
$$S : \C^n \ot N \recht \cH : e_i \ot a \mapsto v_i a \xi$$
extends to a bounded operator from $\C^n \ot \rL^2(N)$ to $\cH$, that we still denote by $S$. By construction, $S(\vphi(a) \eta) = a S(\eta)$ for all $a \in N$ and $\eta \in \C^n \ot \rL^2(N)$. In particular, $S(\eta) = S(p\eta)$ for all $\eta \in \C^n \ot \rL^2(N)$. It follows that $R(U(\eta)) = S(\eta)$ for all $\eta \in p (\C^n \ot N)$. Since $U$ is unitary and $\cK \cap M$ equals $U(p(\C^n \ot N))$, the claim is proven.

Suppose now that $\cK_0 \subset \cK \cap M$ and that $\cK_0 \subset \cK$ is dense. From the claim in the previous paragraph, we get
$$\cK * \cL = \overline{(\cK \cap M) \cL} = \overline{(\cK \cap M) \cL_0} = \overline{\cK_0 \cL_0} = \overline{\cK_0 \cL} \; .$$
So, $\cK_0 \cL$ is dense in $\cK * \cL$.

The Connes tensor product $\cK \ot_N \cL$ can be realized as the $N$-$P$-bimodule $\bim{\vphi(N)}{p (\C^n \ot \cL)}{P}$. The linear operator $T : p (\C^n \ot \cL) \recht \cH : T(\xi) = v \xi$ is $N$-$P$-bimodular with range $\lspan\{v_i \cL \mid i=1,\ldots,n\}$. From the results above, it follows that the closure of the range of $T$ equals $\cK * \cL$. Taking the polar decomposition of $T$, we find an $N$-$P$-bimodular isometry of $\cK * \cL$ into $\cK \ot_N \cL$.
\end{proof}

\subsection{Quasi-normalizers} \label{sec.quasireg}

Let $(M,\tau)$ be a tracial von Neumann algebra and $N \subset M$ a von Neumann subalgebra.
\begin{itemize}
\item
The \emph{quasi-normalizer} of $N$ inside $M$ is defined as:
$$
\QN_M(N) = \Bigl\{a \in M  \; \Big | \;  \exists a_1,\dots, a_n,b_1,\dots,b_m \in M\ \text{s.t.}\ Na \subset
\sum_{i=1}^n a_i N\ \text{and}\ aN \subset \sum_{i=1}^m N b_i \Bigr \}\; .
$$
\item
The inclusion $N \subset M$ is called \emph{quasi-regular} if $\QN_M(N)'' = M$.
\end{itemize}
Remark that the quasi-normalizer of $N\subset M$ is a unital $*$-subalgebra of $M$ containing $N$.

Let $\Gamma$ be a group and $\Lambda \subset \Gamma$ a subgroup.
\begin{itemize}
\item
The \emph{commensurator} of $\Lambda \subset \Gamma$ is defined as
$$\Comm_\Gamma(\Lambda) :=\{ g \in \Gamma \mid g \Lambda g^{-1} \cap \Lambda\ \textrm{has finite index in}\ g\Lambda g^{-1} \textrm{and in}\ \Lambda \}\; .$$
\item
The inclusion $\Lambda \subset \Gamma$ is called \emph{almost normal} if $\Comm_\Gamma(\Lambda) = \Gamma$.
\end{itemize}

Remark that the inclusion $\rL(\Lambda) \subset \rL(\Gamma)$ is quasi-regular if and only if the inclusion $\Lambda \subset \Gamma$ is almost normal. A typical example of an almost normal subgroup is $\SL(n,\Z) \subset \SL(n,\Q)$.

Again we prove an elementary lemma for the convenience of the reader.

\begin{lemm} \label{lem.decompose-quasireg}
Let $N \subset M$ be an irreducible, quasi-regular inclusion of II$_1$ factors. Then, $\bim{N}{\rL^2(M)}{N}$ is the orthogonal direct sum of a family of irreducible, finite index $N$-$N$-subbimodules $K_i \subset \rL^2(M)$, $i \in I$. Writing $K_i^0 := K_i \cap M$, we have, for any choice of decomposition, $\lspan\{K_i^0 \mid i \in I\} = \QN_M(N)$.
\end{lemm}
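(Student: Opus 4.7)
The plan is to handle three things in sequence, using Lemma 2.4 throughout: existence of the decomposition, the easy inclusion $\lspan\{K_i^0\} \subset \QN_M(N)$, and the harder reverse inclusion. For existence, for each $a \in \QN_M(N)$ the cyclic bimodule $\cH_a := \overline{NaN}$ (closure taken in $\rL^2(M)$) has finite left and right $N$-dimension, since the defining inclusions $Na \subset \sum a_j N$ and $aN \subset \sum N b_j$ supply finite generating sets on both sides; thus $\cH_a$ is a finite index $N$-$N$-subbimodule. The density $\QN_M(N)'' = M$ makes the $\rL^2$-closure of the sum of these $\cH_a$ all of $\rL^2(M)$. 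Any finite index $N$-$N$-bimodule has a finite-dimensional endomorphism algebra (via the model in Lemma 2.4 combined with the finite-dimensional relative commutant principle recalled in Section 2.3), hence splits as a finite orthogonal direct sum of irreducible finite index subbimodules. A Zorn argument on mutually orthogonal families of irreducible finite index $N$-$N$-subbimodules then produces a maximal family $(K_i)$; were its $\rL^2$-sum proper, the orthogonal complement would meet some $\cH_a$ and furnish further irreducible summands, contradicting maximality.

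For the easy inclusion, Lemma 2.4 and its right-sided companion identify $K_i^0$ both as a finitely generated right $N$-module and as a finitely generated left $N$-module, so any $a \in K_i^0$ has $aN$ and $Na$ contained in finite right, resp.\ left, $N$-spans, placing $a$ in $\QN_M(N)$. For the reverse inclusion, given $a \in \QN_M(N)$, decompose the finite index bimodule $\cH_a$ abstractly into irreducibles $L_1, \ldots, L_k$. The orthogonal projection $p_i : \rL^2(M) \to K_i$ is $N$-$N$-bimodular, and if nonzero on $\cH_a$ it factors through some $L_j \to K_i$ and forces $K_i \cong L_j$ by Schur. Combined with the finite multiplicity of each irreducible class in $\rL^2(M)$ (standard for quasi-regular irreducible inclusions of II$_1$ factors), the set $F := \{i : p_i(\cH_a) \neq 0\}$ is finite and $\cH_a \subset \bigoplus_{i \in F} K_i$. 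Applying Lemma 2.4 to this finite direct sum, taking the identifying unitary to be the direct sum of those produced for each individual $K_i$, yields $(\bigoplus_{i \in F} K_i) \cap M = \sum_{i \in F} K_i^0$, so $a \in \cH_a \cap M \subset \sum_{i \in F} K_i^0 \subset \lspan\{K_i^0\}$.

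The main obstacle is the finiteness of $F$. It reduces to the fact that each irreducible finite index $N$-$N$-bimodule class appears with finite multiplicity in $\rL^2(M)$; otherwise a cyclic finite index bimodule could sit as an infinite-support ``diagonal'' copy across many $K_i$'s and its intersection with $M$ would escape the algebraic span. I expect to justify this by restricting Lemma 2.4 to each isotypic component and exploiting the irreducibility $N \subset M$ to control the multiplicities.
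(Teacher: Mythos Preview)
Your outline matches the paper's proof closely: existence via the finite-index cyclic bimodules $\overline{NaN}$, the easy inclusion $K_i^0 \subset \QN_M(N)$ via Lemma~\ref{lem.product} applied on both sides, and the reverse inclusion by projecting $a \in \QN_M(N)$ onto finitely many $K_i$. The paper streamlines the last step slightly by recording, from the proof of Lemma~\ref{lem.product}, the explicit formula $P_i(b) = v_i\,(\id \ot E_N)(v_i^* b)$ for the orthogonal projection onto $K_i$, which immediately gives $P_i(M) = K_i^0$; your route through $(\bigoplus_{i \in F} K_i) \cap M = \sum_{i \in F} K_i^0$ via the direct sum of the identifying unitaries is equivalent.

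The gap you single out---finiteness of $F$, i.e.\ finite multiplicity of each irreducible $N$-$N$-bimodule in $\rL^2(M)$---is exactly where the paper invokes Lemma~\ref{lem.finite-dim}. Take $\bim{M}{\cK}{M} = \bim{M}{\rL^2(M)}{M}$, the trivial $M$-$M$-bimodule (certainly of finite index), and $\bim{N}{\cH}{N}$ any irreducible finite index $N$-$N$-bimodule: Lemma~\ref{lem.finite-dim} says the space of $N$-$N$-bimodular maps $\cH \to \rL^2(M)$ is finite dimensional. Since the embeddings $\cH \cong K_i \hookrightarrow \rL^2(M)$ for different $i$ with $K_i \cong \cH$ have orthogonal ranges and are therefore linearly independent, only finitely many such $i$ exist. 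Your proposed workaround (``restrict Lemma~\ref{lem.product} to the isotypic component and use irreducibility'') cannot be applied literally, because Lemma~\ref{lem.product} presupposes the subbimodule has finite index, which is precisely what is in question for an isotypic component; making it rigorous amounts to reproving the special case $\cK = \rL^2(M)$ of Lemma~\ref{lem.finite-dim}. Note also that quasi-regularity plays no role in this step---only the irreducibility of $N \subset M$ is used.
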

\begin{proof}
Whenever $a \in \QN_M(N)$, the closure of $N a N$ inside $\rL^2(M)$ is an $N$-$N$-subbimodule of finite index. So, the linear span of all finite index $N$-$N$-subbimodules of $\rL^2(M)$, is dense in $\rL^2(M)$. Hence, $\rL^2(M)$ can be decomposed into an orthogonal direct sum of a family of irreducible, finite index $N$-$N$-subbimodules $K_i \subset \rL^2(M)$, $i \in I$. Write $K_i^0 := K_i \cap M$. By Lemma \ref{lem.product} and its right-handed analogue, we find $v^i_j$, $j = 1,\ldots,n_i$ and $w^i_j$, $j = 1,\ldots,m_i$ in $K_i^0$ such that
$$\lspan \{v^i_j N \mid j=1,\ldots,n_i\} = K_i^0 = \lspan \{N w^i_j \mid j = 1,\ldots,m_i\} \; .$$
Since $N K_i^0 N = K_i^0$, it follows that $K_i^0 \subset \QN_M(N)$.

The proof of Lemma \ref{lem.product} yields $v_i \in (\C^{n_i})^* \ot M$ such that the orthogonal projection $P_i$ of $\rL^2(M)$ onto $K_i$ satisfies $P_i(a) = v_i (\id \ot E_N)(v_i^* a)$ for all $a \in M$. In particular, $P_i(M) = K_i^0$.

Let now $a \in \QN_M(N)$ and decompose the closure of $N a N$ inside $\rL^2(M)$ as a direct sum of irreducible $N$-$N$-subbimodules $H_1,\ldots,H_n$. Lemma \ref{lem.finite-dim} implies that for every $i=1,\ldots,n$, there exists a finite subset $I_i \subset I$ such that $H_i \not\cong K_j$ whenever $j \in I \setminus I_i$. So, we find a finite subset $I_0 \subset I$ such that $N a N \subset \lspan \{K_i \mid i \in I_0\}$. But then,
$$a = \sum_{i \in I_0} P_i(a) \in \lspan \{K_i^0 \mid i \in I\} \; .$$
\end{proof}

\subsection{Amalgamated free products} \label{sec.amal}

We recall now some basic facts and notations about amalgamated free products, see \cite{popa-amal} and \cite{voi} for more details. Let $(M_0, \tau_0)$ and $(M_1,\tau_1)$ be tracial von Neumann algebras with a common von Neumann subalgebra $N$ such that ${\tau_0}_{|N}={\tau_1}_{|N}$. We denote by $E_i$ the unique trace preserving conditional expectation of $M_i$ onto $N$. The amalgamated free product $M_0 *_N M_1$ is, up to $E$-preserving isomorphism, the unique pair $(M,E)$ satisfying the following two conditions.
\begin{itemize}
\item The von Neumann algebra $M$ is generated by embeddings of $M_0$ and $M_1$ that are identical on $N$, and is equipped with a conditional expectation $E : M \droite N$.
\item The subalgebras $M_0$ and $M_1$ are free with amalgamation over $N$ with respect to $E$. This means that $E(x_1 \cdots x_n) = 0$ whenever $x_j \in M_{i_j}$ such that $E_{i_j}(x_j)=0$ and $i_1 \neq i_2$, \linebreak $i_2 \neq i_3, \ldots, i_{n-1} \neq i_n$.
\end{itemize}

The amalgamated free product $M_0 *_N M_1$ has a dense $*$-subalgebra given by
$$N \;\;\oplus\;\; \bigoplus_{n \geq 1} \;\; \left(
\bigoplus_{i_1 \neq i_2, \ldots, i_{n-1} \neq i_n} \overset{\circ}{M_{i_1}}
\cdots \overset{\circ}{M_{i_n}} \right)\; ,$$
where $\overset{\circ}{M_{i_k}} := M_{i_k}\ominus N$. The von Neumann algebra
$M_0 *_N M_1$ has a trace, defined by $\tau:={\tau_0}\circ E = {\tau_1}\circ E $.

\subsection{Minimal actions of compact groups and bimodule categories} \label{subsec.minimal}

We assume all compact groups to be second countable. A strongly continuous action $G \actson M$ of a compact group $G$ on a II$_1$ factor $M$ is said to be \emph{minimal} if the map $G \droite \Aut(M)$ is injective and if $M \cap (M^G)' = \C 1$. Here, $M^G$ is the von Neumann algebra of $G$-fixed points in $M$. We always denote by $H_\pi$ the Hilbert space of the representation $\pi$ and by $\eps$ the trivial representation.

Every minimal action $G \actson M$ gives rise to a tensor functor $\Rep(G) \recht \Bimod(M^G)$. This goes back to \cite{roberts} and can be stated explicitly as follows.

\begin{prop} \label{prop.tensorfunctor}
Let $G$ be a second countable compact group and $\si : G \actson M$ a minimal action. Set $P:= M^G$. Then,
\begin{align*}
& \Rep(G) \recht \Bimod(P) : \pi \mapsto \bim{P}{\Mor(H_\pi,\eL^2(M))}{P} \quad \text{where}\quad \langle S,T \rangle := \Tr(S^* T) \\ & \text{and} \quad (a \cdot S \cdot b)(\xi) = a S(\xi) b \quad\text{for all}\;\; S,T \in \Mor(H_\pi,\eL^2(M)) \; , \; a,b \in P \; , \xi \in H_\pi
\end{align*}
defines a fully faithful tensor functor from the category $\Rep(G)$ of finite dimensional unitary representation of $G$ to the category $\Bimod(P)$ of finite index $P$-$P$-bimodules.
\end{prop}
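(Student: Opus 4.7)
The plan is to identify $\cM_\pi := \Mor(H_\pi, \eL^2(M))$ with the $\pi$-multiplicity space of $\eL^2(M)$ viewed as a unitary $G$-representation via $\sigma$. The map $\xi \otimes S \mapsto S(\xi)$ is an isometric $G$-intertwiner from $H_\pi \otimes \cM_\pi$ onto the $\pi$-isotypic subspace of $\eL^2(M)$, and Peter--Weyl gives $\eL^2(M) = \bigoplus_{\pi \in \irr(G)} H_\pi \otimes \cM_\pi$. Since $P = M^G$ commutes with each $\sigma_g$, each isotypic summand is a $P$-$P$-subbimodule of $\eL^2(M)$, and on the right-hand side the $P$-$P$-action acts only on the $\cM_\pi$ factor; this coincides with the bimodule structure in the statement.

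To obtain finite Jones index, I would apply the $G$-averaging projection $e_\pi := \dim(H_\pi) \int_G \overline{\chi_\pi(g)}\, \sigma_g\, dg$ to $M$, producing a norm-dense subspace of $S \in \cM_\pi$ with $S(H_\pi) \subset M$. Standard spectral-subspace theory for minimal compact actions (Roberts, Wassermann, Izumi--Longo--Popa) then exhibits a finite Pimsner--Popa basis of matrix coefficients lying in $M$, so $\cM_\pi$ is finitely generated as a $P$-module on both sides. For the tensor structure, given such bounded $S \in \cM_\pi$ and $T \in \cM_\eta$, define $(S \cdot T)(\xi \otimes \eta) := S(\xi)\, T(\eta)$ as a product in $M$; this is $G$-equivariant, lies in $\cM_{\pi \otimes \eta}$, and is $P$-middle-linear. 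I would verify isometry on the Connes tensor product by a direct computation of $P$-valued inner products, then invoke Lemma \ref{lem.product}(2) to obtain an isometric $P$-$P$-bimodular map $\cM_\pi \otimes_P \cM_\eta \to \cM_{\pi \otimes \eta}$. Surjectivity follows because matrix coefficients of $\pi \otimes \eta$ are products of matrix coefficients of $\pi$ and $\eta$.

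On morphisms, send $T \in \Mor(H_\pi, H_\eta)$ (a $G$-intertwiner) to the $P$-$P$-bimodular operator $S \mapsto S \circ T^*$; faithfulness is immediate by evaluating on vectors. For fullness I would reduce to the irreducible case and show that, for irreducible $\pi$, $\cM_\pi$ is an irreducible $P$-$P$-bimodule, while $\cM_\pi$ and $\cM_\eta$ are non-isomorphic for inequivalent irreducibles. The latter follows from orthogonality of isotypic components. The former is where minimality is essential: a non-scalar $P$-$P$-bimodular endomorphism of $\cM_\pi$, composed with the evaluation $\cM_\pi \otimes_P \overline{\cM_\pi} \to \cM_\eps = \eL^2(P)$ coming from the tensor structure (using $\overline{\cM_\pi} \cong \cM_{\overline\pi}$), would produce a non-scalar element of $M \cap P'$, contradicting minimality $M \cap P' = \C 1$. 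The main obstacle is the finite-index step above: exploiting minimality to secure a Pimsner--Popa basis sitting inside $M$ rather than merely in $\eL^2(M)$ requires the spectral theory of minimal compact actions; once this is in hand, tensor functoriality and fullness reduce to essentially formal Peter--Weyl bookkeeping together with Lemma \ref{lem.product}.
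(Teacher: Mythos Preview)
The paper does not actually prove this proposition; it attributes the statement to Roberts and then records, immediately afterwards, the single device that does all the work. Namely, for each irreducible $\pi$ one chooses (citing Wassermann) a unitary $V_\pi\in\B(H_\pi)\ot M$ with $(\id\ot\sigma_g)(V_\pi)=V_\pi(\pi(g)\ot 1)$ and puts $\psi_\pi(a)=V_\pi(1\ot a)V_\pi^*$. Then $\psi_\pi:P\to\B(H_\pi)\ot P$ is a unital inclusion into a matrix amplification, so $H(\psi_\pi)\cong\Mor(H_\pi,\eL^2(M))$ has finite Jones index automatically; the relative commutant $\psi_\pi(P)'\cap(\B(H_\pi)\ot P)$ is computed by conjugating back with $V_\pi$, using $M\cap P'=\C1$, and then Schur, which gives irreducibility and more generally identifies bimodular intertwiners with $G$-intertwiners; and the tensor structure is exactly Lemma~\ref{lemm.pi ot eta}, which factors $(V_\pi)_{13}(V_\eta)_{23}$ through the $V_{\mu_i}$ for the irreducible constituents $\mu_i$ of $\pi\ot\eta$. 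Your route via Peter--Weyl multiplicity spaces, Pimsner--Popa bases, and Lemma~\ref{lem.product} is a legitimate alternative and is conceptually closer to the abstract Doplicher--Roberts picture, but it rebuilds by hand what the single choice of $V_\pi$ gives for free; in particular your ``main obstacle'' (finite index via a basis in $M$) evaporates once $V_\pi$ is in hand, since the coefficients of $V_\pi$ already form such a basis.

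There is one genuine soft spot. You claim that $\cM_\pi\not\cong\cM_\eta$ for inequivalent irreducibles ``follows from orthogonality of isotypic components''. It does not: orthogonality inside $\eL^2(M)$ says nothing about the existence of an \emph{abstract} $P$-$P$-bimodular unitary between the two spaces. What is needed here is again minimality, by the same mechanism you invoke for irreducibility. Concretely, a bimodular unitary $H(\psi_\pi)\to H(\psi_\eta)$ is implemented by some $u\in\B(H_\pi,H_\eta)\ot P$ with $u\psi_\pi(a)=\psi_\eta(a)u$ for all $a\in P$; then $V_\eta^*uV_\pi$ commutes with $1\ot P$, hence lies in $\B(H_\pi,H_\eta)\ot(M\cap P')=\B(H_\pi,H_\eta)\ot\C1$, and its $G$-covariance forces the scalar part to intertwine $\pi$ and $\eta$. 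Your endomorphism argument can be made to say exactly this, but the off-diagonal case needs the same input, not orthogonality.
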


Let $\si: G \actson M$ be a minimal action and choose a complete set $\irr(G)$ of inequivalent, irreducible unitary representations of $G$. For every $\pi \in \irr(G)$, we choose and fix a unitary $V_\pi \in \B(H_\pi) \ot M$ satisfying $(\id \ot \si_g)(V_\pi) = V_\pi (\pi(g) \ot 1)$, see e.g.\ \cite[Theorem 12 and following comments]{Wass}.

Put $P:= M^G$. For every $\pi \in \irr(G)$, the map
\begin{equation} \label{eq. psi pi}
\psi_\pi : P \droite \B(H_\pi) \ot P : \psi_\pi(a) = V_\pi (1 \ot a)V_\pi^*
\end{equation}
defines an irreducible, finite index inclusion. Define the Hilbert space $H(\psi_\pi) = H_\pi^* \ot \eL^2(P)$, which is a $P$-$P$-bimodule as $\bim{P}{(H_\pi^* \ot \eL^2(P))}{\psi_\pi(P)}$. Of course, in the light of Proposition \ref{prop.tensorfunctor}, we have $H(\psi_\pi) \cong \Mor(H_\pi,\eL^2(M))$ as $P$-$P$-bimodules.

We introduce now some notations concerning \emph{spectral subspaces} of irreducible representations. Denote by $\Mor(\pi,M)$ the space of linear maps $S : H_\pi \recht M$ satisfying $\si_g \circ S = S \circ \pi(g)$. We denote the linear span of $\Mor(\pi,M) H_\pi$ as $L^0(\pi) \subset M$. The closure of $L^0(\pi)$ inside $\eL^2(M)$ is denoted by $L(\pi)$.
\begin{itemize}
\item As a $P$-$P$-bimodule, $L(\pi)$ is the direct sum of $\dim(\pi)$ copies of $H(\psi_\pi)$. More precisely, if you consider on $\B(H_\pi) \ot P$ the scalar product given by $\Tr \ot \tau$, the map
    $$\theta_\pi : \bim{{1 \ot P}}{\bigl( \B(H_\pi) \ot P \bigr)}{\psi_\pi(P)} \recht \bim{P}{L^0(\pi)}{P} : \theta_\pi(a) = \dim(\pi)^{1/2} (\Tr \ot \id)(a V_\pi)$$
    is $P$-$P$-bimodular, bijective and extends to an isometry $\B(H_\pi) \ot \eL^2(P) \hookrightarrow \eL^2(M)$.
\item The adjoint of $\theta_\pi$ is given by $E_\pi := \theta_\pi^*$ satisfying
\begin{equation}\label{eq.Epi}
E_\pi(b) = \dim(\pi)^{1/2} \int_G (\pi(g)^* \ot \si_g(b))V_\pi^* \; dg
\end{equation}
for all $b \in M$.
\item Since every unitary representation of $G$ splits as a direct sum of irreducibles, we have $$\sum_{\pi \in \irr(G)} E_\pi^* E_\pi = 1 \; .$$ Equivalently, $\eL^2(M)$ is the orthogonal direct sum of the subspaces $L(\pi)$, $\pi \in \irr(G)$.
\end{itemize}

\begin{rema} \label{rem. N in P quasi regular}
The coefficients of the unitaries $V_\pi$ quasi-normalize $M^G$ and so, the inclusion $M^G \subset M$ is quasi-regular. In fact, by Lemma \ref{lem.decompose-quasireg}, the quasi-normalizer of $M^G$ inside $M$ equals the linear span of all $L^0(\pi)$, $\pi \in \irr(G)$.
\end{rema}

For later use (in the proof of Lemma \ref{lem.step-4}), we record the following elementary property.

\begin{lemm} \label{lemm.pi ot eta}
Let $\si : G \actson M$ be a minimal action of a compact group $G$ on a II$_1$ factor $M$. Let $\pi$ and $\eta$ be irreducible representations of $G$. Take $\mu_1,\ldots,\mu_n \in \irr(G)$, with possible repetitions, and isometries $v_i \in \Mor(\mu_i,\pi \ot \eta)$ satisfying $\sum_{i=1}^n v_i v_i^* = 1$. There exist $X_i \in \B(H_{\mu_i}, H_\pi \ot H_\eta) \ot M^G$ with $X_i^* X_i = 1$ for all $i$ and $\sum_{i=1}^n X_i X_i^* = 1$ such that
$$(V_\pi)_{13} (V_\eta)_{23} = \sum_{i = 1}^n X_i V_{\mu_i} (v_i^* \ot 1) \; .$$
\end{lemm}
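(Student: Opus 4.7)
The plan is to define the $X_i$ explicitly from $W := (V_\pi)_{13}(V_\eta)_{23}$ and $V_{\mu_i}$, and then verify the three required properties ($G$-invariance, isometry relations, and the factorization identity) by direct computation.

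First, I would observe that $W$ is a unitary in $\B(H_\pi \ot H_\eta) \ot M$ satisfying
$$(\id \ot \id \ot \si_g)(W) = (V_\pi)_{13}(\pi(g) \ot 1 \ot 1)(V_\eta)_{23}(1 \ot \eta(g) \ot 1) = W\bigl((\pi \ot \eta)(g) \ot 1\bigr),$$
using that $\pi(g) \ot 1 \ot 1$ commutes with $V_\eta$ on the 2-3 legs. In other words, $W$ implements the representation $\pi \ot \eta$ in exactly the same way $V_\mu$ implements $\mu$.

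Given the isometries $v_i \in \Mor(\mu_i, \pi \ot \eta)$ with $\sum_{i=1}^n v_i v_i^* = 1$, I would define
$$X_i := W\, (v_i \ot 1)\, V_{\mu_i}^* \;\in\; \B(H_{\mu_i}, H_\pi \ot H_\eta) \ot M.$$
Using $(\pi \ot \eta)(g)\, v_i = v_i\, \mu_i(g)$ together with the equivariance of $W$ and $V_{\mu_i}$, a short calculation gives
$$(\id \ot \si_g)(X_i) = W \bigl((\pi \ot \eta)(g) v_i \ot 1\bigr) \bigl(\mu_i(g)^* \ot 1\bigr) V_{\mu_i}^* = W\,(v_i \ot 1)\, V_{\mu_i}^* = X_i,$$
so $X_i \in \B(H_{\mu_i}, H_\pi \ot H_\eta) \ot M^G$.

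Finally I would verify the required identities by routine algebra: $X_i^* X_i = V_{\mu_i}(v_i^*v_i \ot 1)V_{\mu_i}^* = 1$ since each $v_i$ is an isometry and $V_{\mu_i}$ is unitary; $\sum_i X_i X_i^* = W\bigl(\sum_i v_i v_i^* \ot 1\bigr)W^* = WW^* = 1$; and
$$\sum_{i=1}^n X_i V_{\mu_i} (v_i^* \ot 1) = W \sum_{i=1}^n (v_i v_i^* \ot 1) = W = (V_\pi)_{13}(V_\eta)_{23}.$$
There is no real obstacle here; the only thing to get right is the definition of $X_i$, which is essentially forced by requiring the factorization identity together with the unitarity of $V_{\mu_i}$.
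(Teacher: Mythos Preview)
Your argument is correct: defining $X_i := W(v_i \ot 1)V_{\mu_i}^*$ with $W = (V_\pi)_{13}(V_\eta)_{23}$ is exactly the natural choice, and your verifications of the equivariance, the isometry relations, and the factorization identity are all sound. The paper itself does not supply a proof of this lemma, calling it an ``elementary property'' and leaving the details to the reader; what you have written is precisely the routine computation the authors had in mind.
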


\subsection{Freeness and free products of fusion algebras} \label{subsec.freeness}

The notions of freeness and free product of fusion algebras were introduced in \cite[Section 1.2]{BJ}, in the study of free composition of subfactors. For convenience, we recall the definition.

\begin{defi}[{\cite[Section 1.2]{BJ}}] \label{def.freeness}
Let $\cA$ be a fusion algebra and $\cA_i \subset \cA$ fusion subalgebras for $i = 1,2$. We say that $\cA_1$ and $\cA_2$ are \emph{free inside $\cA$} if every alternating product of irreducibles in $\cA_i \setminus \{e\}$, remains irreducible and different from $\{e\}$.
\end{defi}

Given fusion algebras $\cA_1$ and $\cA_2$, there is up to isomorphism a unique fusion algebra $\cA$ generated by copies of $\cA_1$ and $\cA_2$ that are free. We call this unique $\cA$ the \emph{free product} of $\cA_1$ and $\cA_2$ and denote it by $\cA_1 * \cA_2$.

Denote by $R$ the hyperfinite II$_1$ factor. The fusion algebra $\FAlg(R)$ is huge, in the sense that $\FAlg(R)$ contains many free fusion subalgebras. More precisely, it was shown in Theorem 5.1 of \cite{V1} that countable fusion subalgebras of $\FAlg(R)$ can be made free by conjugating one of them with an automorphism of $R$ (see Theorem \ref{Gdelta} below). Note that the same result has first been proven for countable subgroups of $\Out(R)$ in \cite{IPP}. In both cases, the key ingredients come from \cite{P10}.

Let $M$ be a II$_1$ factor and $\bim{M}{\cK}{M}$ a finite index $M$-$M$-bimodule. Whenever $\al \in \Aut(M)$, we define the conjugation of $K$ by $\al$ as the bimodule $\cK^\al := H(\al^{-1}) \ot_M \cK \ot_M H(\al)$.

\begin{theo}[Theorem 5.1 in \cite{V1}] \label{Gdelta}
Let $R$ be the hyperfinite II$_1$ factor and $\cA_0,\cA_1$ two countable fusion
subalgebras of $\FAlg(R)$. Then,
$$\{\alpha \in \Aut(R) \mid  \cA_0^{\alpha}\ \text{and}\ \cA_1\ \text{are free}\}$$
is a $G_{\delta}$-dense subset of $\Aut(R)$.
\end{theo}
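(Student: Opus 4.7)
The proof should proceed by a Baire category argument on the Polish group $\Aut(R)$ (topologized by pointwise $\|\cdot\|_2$-convergence). The plan is to write the target set as a countable intersection of open dense subsets. Fix enumerations $(x_i)_{i\geq 1}$ and $(y_j)_{j\geq 1}$ of the non-trivial irreducible elements of $\cA_0$ and $\cA_1$. For each finite alternating word $w=(z_1,\ldots,z_n)$ in these irreducibles (adjacent letters drawn from opposite families), set
$$U_w := \{\alpha \in \Aut(R) \mid z_1^{(\alpha)} \ot_R \cdots \ot_R z_n^{(\alpha)} \;\text{is irreducible and}\; \not\cong \bim{R}{\rL^2(R)}{R}\},$$
where $z_k^{(\alpha)}$ denotes $z_k^\alpha$ when $z_k \in \cA_0$ and $z_k$ otherwise. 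Then $\cA_0^\alpha$ and $\cA_1$ are free in the sense of Definition \ref{def.freeness} if and only if $\alpha \in \bigcap_w U_w$, and since the set of words is countable this is a countable intersection.

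Openness of each $U_w$ should be routine. Realize the finitely many bimodules in $w$ by finite index inclusions $\psi_k : R \recht p_k R^{n_k} p_k$; the conjugate bimodule $z_k^\alpha$ corresponds to $\psi_k \circ \alpha^{-1}$, which depends continuously on $\alpha$ in the $\|\cdot\|_2$-topology. The alternating Connes tensor product is therefore realized as an inclusion whose defining data vary continuously with $\alpha$. Its $R$-$R$-bimodular endomorphism algebra is finite-dimensional (by Lemma \ref{lem.finite-dim} or directly because the bimodule has finite Jones index), and its dimension is upper semi-continuous in the defining data; hence the condition that this dimension equals $1$ is open. The open condition that the multiplicity of the trivial bimodule vanishes is handled by the same upper semi-continuity argument.

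The main obstacle is density. Given $\alpha_0 \in \Aut(R)$ and a word $w$, one must produce $\alpha$ arbitrarily close to $\alpha_0$ achieving both irreducibility and non-triviality of the corresponding alternating product. Here I would invoke Popa's independence techniques from \cite{P10}: encode the finitely many bimodules appearing in $w$ by finite-index or finite-dimensional subalgebras of $R$, and use Popa's incremental patching / asymptotic freeness constructions on the hyperfinite factor to produce, in any prescribed $\|\cdot\|_2$-neighbourhood of $\alpha_0$, an automorphism $\alpha$ making the $\alpha$-conjugated $\cA_0$-side subalgebras sufficiently free, in the operator-algebraic sense, from the $\cA_1$-side subalgebras. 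The Bisch--Jones framework (\cite{BJ}, Section 1.2) then translates this subfactor-theoretic freeness into the fusion-algebra freeness witnessing $\alpha \in U_w$. With density in hand, Baire's theorem concludes that $\bigcap_w U_w$ is dense $G_\delta$; this is the only non-formal step of the argument, and all the technical weight rests on the Popa-type construction.
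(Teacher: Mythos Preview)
The paper does not prove Theorem \ref{Gdelta}; it is quoted verbatim as Theorem 5.1 of \cite{V1} and used as a black box in the proof of Theorem \ref{thm.main}. There is therefore no proof in this paper to compare your proposal against.

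For what it is worth, your outline matches the strategy actually used in \cite{V1}: one writes the freeness condition as a countable intersection of sets $U_w$ indexed by alternating words, checks openness via continuity of the relevant relative-commutant dimensions, and proves density by invoking Popa's free-independence results from \cite{P10}. Two small remarks on the sketch itself. First, the conjugated bimodule $\cK^\alpha = H(\alpha^{-1}) \ot_R \cK \ot_R H(\alpha)$ is realized not by $\psi_k \circ \alpha^{-1}$ but by $(\id \ot \alpha^{-1}) \circ \psi_k \circ \alpha$; this does not affect the continuity argument. Second, the density step in \cite{V1} is more delicate than your summary suggests: one must handle all words simultaneously up to a given length, and the translation from Popa's asymptotic-freeness statements to irreducibility and non-triviality of the Connes tensor products requires some care (this is where most of the work in \cite{V1} lies). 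Your phrase ``the Bisch--Jones framework then translates this subfactor-theoretic freeness into the fusion-algebra freeness'' glosses over the actual argument, which is specific to the hyperfinite situation and not a formal consequence of \cite{BJ}.
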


\subsection{Intertwining by bimodules}

In this section, we briefly recall Popa's \emph{intertwining-by-bimodules technique} introduced in \cite{P3}. This very powerful technique is used to deduce unitary conjugacy of two von Neumann subalgebras $A$ and $B$ of a tracial von Neumann algebra $(M, \tau)$ from their weak containment $A \prec_M B$ that we define now.

\begin{defi} \label{def.embed}
Let $(M,\tau)$ be a von Neumann algebra with normal faithful tracial state. Let $A,B \subset M^n$ be possibly non-unital embeddings. We write $A \underset{M}{\prec} B$ if $1_A (\M_n(\C) \ot \eL^2(M))1_B$ contains a non-zero $A$-$B$-subbimodule $K$ with $\dim(K_B) < \infty$.
\end{defi}

By \cite[Theorem 2.1]{P1} (see also Appendix~C in \cite{V-Bourbaki}), we have $A \prec_M B$ if and only if there exist $m \in \N$, a non-zero partial isometry $v \in 1_A \bigl( \C^n (\C^m \ot \C^n)^* \ot M \bigr)(1 \ot 1_B)$ and a possibly non-unital $*$-homomorphism $\psi : A \droite \M_m(\C) \ot B$  satisfying $a v = v \psi(a)$ for all $a \in A$. In particular, writing for $i=1,\ldots,m$, $v_i := v(e_i \ot 1 \ot 1)$, we have found $v_1,\ldots,v_m \in 1_A M^n 1_B$ such that $\lspan \{ v_i B \mid i=1,\ldots,m\}$ is a left $A$-module.

\subsection{Property (T) for II$_1$ factors} \label{sec.T}

Property (T) for finite von Neumann algebras was defined by Connes and Jones in \cite{CJ}: a II$_1$ factor $(M,\tau)$ has property (T) if and only if there exists $\epsilon >0$ and a finite subset $F\subset M$ such that every $M$-$M$-bimodule $H$ that has a unit vector $\xi$ satisfying $\|x \xi - \xi x\| \leq \epsilon$, for all $x \in F$, actually has a non-zero $M$-central vector $\xi_0$, meaning that $x \xi_0 = \xi_0 x$, for all $x \in M$. Note that an ICC group $\Gamma$ has property (T) if and only if the II$_1$ factor $\rL(\Gamma)$ has property (T) in the sense of Connes and Jones.

\subsection{Rigid subalgebras of amalgamated free products}

For the convenience of the reader we quote some of the results obtained by Ioana, Peterson and Popa in \cite[Theorems 1.1 and 5.1]{IPP}. We only state the version as needed in this paper and give a few comments.

\begin{theo}[Theorems 1.1 and 5.1 in \cite{IPP}] \label{thm.IPP}
Let $(M_0,\tau_0)$ and $(M_1,\tau_1)$ be von Neumann algebras with faithful normal tracial states. Let $N$ be a common von Neumann subalgebra of $M_0$ and $M_1$ with $\tau_0|_N = \tau_1|_N$. Define the amalgamated free product $M := M_0 *_N M_1$ with respect to the unique trace-preserving conditional expectations.
\begin{enumerate}
\item Let $p \in M_0^n$ be a projection and $Q \subset p M_0^n p$ a von Neumann subalgebra satisfying $Q \not\prec_{M_0} N$. Whenever $\cK \subset p(\C^n \ot \rL^2(M))$ is a $Q$-$M_0$-subbimodule with $\dim(\cK_{M_0}) < \infty$, we have $\cK \subset p(\C^n \ot \rL^2(M_0))$. In particular, the quasi-normalizer of $Q$ inside $p M^n p$ is contained in $p M_0^n p$.
\item Assume that $M_0$ is a factor. Let $p \in M^n$ be a projection and $Q \subset p M^n p$ a von Neumann subalgebra with the relative property (T) in the sense of Popa \cite[Definition 4.2]{P3} (which holds, in particular, if $Q$ is a II$_1$ factor having the property (T) of Connes and Jones explained in Section \ref{sec.T}). Assume that $Q \not\prec_{M} M_1$. Then, there exists $u \in M^n$ satisfying $uu^* = p$ and $u^* Q u \subset M_0^n$.
\end{enumerate}
\end{theo}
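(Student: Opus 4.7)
The plan is to imitate Popa's \emph{s-malleable deformation} strategy as executed in \cite{IPP}. The central construction is a tracial enlargement $\tilde M \supset M$ together with a one-parameter family $(\alpha_t)_{t \in \R}$ of trace-preserving automorphisms of $\tilde M$ and an involutive trace-preserving automorphism $\beta$ of $\tilde M$ satisfying $\alpha_0 = \id$, $(\alpha_t)$ is a group, $\alpha_t|_{M_0} = \id$, $\alpha_t \recht \id$ pointwise in $\|\cdot\|_2$, $\beta^2 = \id$, $\beta|_M = \id$ and $\beta\alpha_t\beta = \alpha_{-t}$. A concrete realization takes $\tilde M := M *_N (N \otvN \rL(\F_2))$ and rotates the canonical Haar unitaries of $\rL(\F_2)$ by an angle $t\pi/2$ after embedding $\rL(\F_2) \hookrightarrow \rL(\F_2) * \rL(\F_2)$. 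The key algebraic feature is that $\alpha_1(M)$ is free from $M$ with amalgamation over $N$.

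For Part (1), I would assume toward contradiction that some $\xi \in \cK$ is orthogonal to $p(\C^n \ot \rL^2(M_0))$. Since $\alpha_t$ fixes $M_0$ and $\cK$ has finite right $M_0$-dimension, the right $M_0$-linear map $\cK \recht \rL^2(\tilde M)^n$, $\eta \mapsto \alpha_t(\eta) - \eta$, is Hilbert--Schmidt over $M_0$, with bound uniform for small $t$. Splitting $\alpha_t(\xi)$ into its $\rL^2(M)$-part and its ``free'' part transverse to $\rL^2(M)$, the freeness of $\alpha_1(M)$ over $M$ with amalgamation over $N$ lets one project the deformed bimodule onto $\rL^2(N)^n$ and extract a non-zero $Q$-$N$-subbimodule of finite right $N$-dimension, contradicting $Q \not\prec_{M_0} N$. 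The quasi-normalizer statement then drops out: for $a \in \QN_{pM^n p}(Q)$ the closure $\overline{QaQ}$ is a $Q$-$Q$-bimodule of finite right and left $Q$-dimension, hence in particular a finite right $M_0$-dimensional $Q$-$M_0$-bimodule containing $a$, and the first part places it inside $p(\C^n \ot \rL^2(M_0))$, forcing $a \in pM_0^n p$.

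For Part (2), relative property (T) of $Q \subset pM^n p$ is applied to the family of $\tilde M$-$\tilde M$-bimodules $\cH_t := \rL^2(\tilde M)$ with right action twisted by $\alpha_t$. As $t \recht 0$, the vector $\hat p$ becomes almost $Q$-central uniformly, so relative property (T) produces a non-zero partial isometry $v \in p(\C^n \ot \tilde M)$ satisfying $vu = \alpha_{t_0}(u)v$ for every $u \in \cU(Q)$ and some $t_0 > 0$, i.e.\ an intertwiner $Q \prec_{\tilde M} \alpha_{t_0}(Q)$. Using $\beta$, which fixes $M$ and reverses $\alpha_t$, the classical doubling trick combines $v$ and $\beta(v)$ to produce an intertwiner for $\alpha_{2t_0}$, and iteration reaches $\alpha_1(Q)$. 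Since $\alpha_1(M)$ is free from $M$ over $N$, Part (1) applied inside $\tilde M$ forces the resulting intertwiner to land in either a conjugate of $M_0$ or a conjugate of $M_1$; the hypothesis $Q \not\prec_M M_1$ eliminates the second possibility, and a maximality argument upgrades the partial isometry into a unitary $u \in M^n$ with $uu^* = p$ and $u^*Qu \subset M_0^n$.

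The hard part will be orchestrating the doubling in Part (2): one has to check that the partial isometries produced at the successive scales $t_0, 2t_0, 4t_0, \ldots$ can be chained without the support collapsing, and that the final intertwiner indeed has $v^*v = p$ rather than merely a proper subprojection. This maximality step relies on the factoriality of $M_0$, on the s-malleable relation between $\alpha_t$ and $\beta$, and crucially on the hypothesis $Q \not\prec_M M_1$, which excludes the degenerate configurations in which the iteration would get ``stuck'' on an $M_1$-side corner. In Part (1) the technical delicacy is phrasing finite right $M_0$-dimension as a Hilbert--Schmidt-type condition compatible with the deformation, which is what allows the free transverse component of $\alpha_t(\xi)$ to be converted into a genuine finite index $Q$-$N$-bimodule.
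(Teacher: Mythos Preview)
The paper does not prove this theorem. It is quoted verbatim as Theorems~1.1 and~5.1 of \cite{IPP}, and the few lines following the statement only explain two minor reductions: that passing to $M^n = M_0^n *_{N^n} M_1^n$ is harmless, and that for Part~(2) one may assume $n=1$ and replace $Q \subset pMp$ by a subalgebra $\tilde Q \subset M$ with $p\tilde Q p = Q$, after which \cite[Theorem~5.1]{IPP} applies directly (with the homogeneity hypothesis removed via \cite[Section~5.2]{Houd}). That is the entire content of the paper's ``proof''.

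Your proposal instead sketches the actual argument of \cite{IPP}. As a high-level summary of the s-malleable strategy for Part~(2) it is broadly accurate: relative property~(T) applied to the deformed bimodules yields an intertwiner at small scale, the symmetry $\beta$ doubles the scale, iteration reaches $\alpha_1$, and one concludes via a dichotomy between $M_0$ and $M_1$. However, your concrete realization of the deformation is off. With $\tilde M = M *_N (N \otvN \rL(\F_2))$ and $\alpha_t$ a rotation inside the $\rL(\F_2)$ factor, any such automorphism fixes the other free factor $M$ pointwise; so $\alpha_t|_M = \id$, not merely $\alpha_t|_{M_0} = \id$, and the whole mechanism collapses (there is nothing to intertwine). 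The construction in \cite{IPP} is different: the Haar unitaries are used to \emph{conjugate} elements of $M_0 \ominus N$ and $M_1 \ominus N$ (by $u_1^t$ and $u_2^t$ respectively), so that $\alpha_t$ genuinely moves $M$ inside $\tilde M$ while $\alpha_t \to \id$ in $\|\cdot\|_2$. Without this, neither the transversality in Part~(1) nor the intertwiner in Part~(2) has any content.

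A second point: your approach to Part~(1) via the deformation is not how \cite{IPP} proceeds. Their Theorem~1.1 is proved directly from the word decomposition of $\rL^2(M_0 *_N M_1)$, by showing that any $Q$-$M_0$-subbimodule of finite right dimension which is not already inside $\rL^2(M_0)$ produces, after compressing by suitable words, a nonzero $Q$-$N$-subbimodule of finite right $N$-dimension. No enlargement $\tilde M$ is needed. Your deformation route to Part~(1) may be salvageable once the correct $\alpha_t$ is in place, but as written it rests on the same flawed construction.
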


The first item is precisely \cite[Theorem 1.1]{IPP}. Note, in comparison with \cite{IPP}, that working with matrices over $M$ is not really more general, because we identify $M^n$ with $M_0^n *_{N^n} M_1^n$.

The second item is a special case of \cite[Theorem 5.1]{IPP}. Again, it is not a real generalization to work with matrices over $M$, so that we may assume $n = 1$. Next, since $M$ is a factor, we can choose a von Neumann subalgebra $\tilde{Q} \subset M$, containing $p$, such that $Q = p \tilde{Q} p$ and such that the inclusion $\tilde{Q} \subset M$ still has the relative property (T) and still satisfies $\tilde{Q} \not\prec_{M} M_1$. Then, \cite[Theorem 5.1]{IPP} provides a unitary $v \in M$ such that $v^* \tilde{Q} v \subset M_0$. We put $u = p v$. Finally, in \cite[Theorem 5.1]{IPP}, the extra assumption is made that the inclusions $N \subset M_i$ are homogeneous. But, as explained in \cite[Section 5.2]{Houd}, the homogeneity assumption is superfluous.

\section{Proof of Theorem \ref{thm.main}}

Fix a second countable compact group $G$ and an action $G \overset{\sigma}{\curvearrowright} M_1$ on the II$_1$ factor $M_1$. Denote $N = M_1^G$ and fix an inclusion $N \subset M_0$ into the II$_1$ factor $M_0$. We are interested in the II$_1$ factor

$$M: = M_0 *_N M_1$$

and extend the action $G \actson M_1$ to an action $G \actson M$ by acting trivially on $M_0$.

\subsubsection*{Assumptions}
\begin{enumerate}\renewcommand{\theenumii}{.\alph{enumii}}\renewcommand{\labelenumii}{\arabic{enumi}.\alph{enumii})}
\item\label{1} {\bf Assumption on the action $G \overset{\sigma}{\curvearrowright} M_1$~:} $\sigma$ is minimal and $M_1$ is hyperfinite.
\item\label{2} {\bf Assumptions on the inclusion $N \subset M_0$.}
\begin{enumerate}
\item\label{2A} The inclusion $N \subset M_0$ is irreducible, i.e.\ $N' \cap M_0 = \C 1$ and is quasi-regular (see Section \ref{sec.quasireg}).
\item\label{2B} A condition on absence of finite dimensional unitary representations (cf.\ Lemmas \ref{lem.absence1} and \ref{lem.absence2}). Denote by $\cF_0$ the fusion subalgebra of $\FAlg(N)$ generated by the finite index $N$-$N$-subbimodules of $\bim{N}{\rL^2(M_0)}{N}$. Whenever $\bim{M_0}{\cK}{M_0}$ is an irreducible finite index $M_0$-$M_0$-bimodule containing a non-zero element of $\cF_0$ as $N$-$N$-subbimodule, we have $\bim{M_0}{\cK}{M_0} \cong \bim{M_0}{\rL^2(M_0)}{M_0}$.
\end{enumerate}
\item\label{3} {\bf Rigidity assumption:} there exists $N \subset N_0 \subset M_0$ such that $N_0$ has property (T) in the sense of Connes and Jones \cite{CJ} and such that $N_0 \subset M_0$ is quasi-regular.
\item\label{4} {\bf Relation between $G \overset{\sigma}{\curvearrowright} M_1$ and $N \subset M_0$.} Denote by $\cF$ the fusion subalgebra of $\FAlg(N)$ generated by the finite index $N$-$N$-bimodules that arise as $N$-$N$-subbimodule of a finite index $M_0$-$M_0$-bimodule. Then, $\cF$ is free with respect to the canonical image of $\Rep(G)$ in $\FAlg(N)$, given by the minimal action $\sigma$ (see Proposition \ref{prop.tensorfunctor} and recall that $N = M_1^G$).
\end{enumerate}

\begin{rema}\label{rem.countable}
If $N \subset M_0$ is an irreducible inclusion of II$_1$ factors having the relative property (T) (so, in particular, if $N \subset M_0$ satisfies assumption \ref{3}), one can repeat the proof of \cite[Lemma 4.1]{V1} and it follows that the fusion algebra $\cF$ defined in assumption \ref{4} is countable.
\end{rema}

We now clarify the slightly mysterious assumption \ref{2B}. The first of the following two lemmas is not used in the paper, but makes the link with absence of finite dimensional unitary representations.

\begin{lemm} \label{lem.absence1}
Let $N$ be a II$_1$ factor and $\Gamma \actson N$ an action of the countable group $\Gamma$ by outer automorphisms of $N$. Put $M_0 := N \rtimes \Gamma$. Then, the inclusion $N \subset M_0$ satisfies assumption \ref{2B} if and only if $\Gamma$ has no non-trivial finite dimensional unitary representations.
\end{lemm}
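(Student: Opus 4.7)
The plan is to classify, up to isomorphism, the irreducible finite-index $M_0$-$M_0$-bimodules whose $N$-$N$-restriction lies in $\cF_0$ as the bimodules $H(\tilde\pi)$ induced from finite-dimensional unitary representations $\pi : \Gamma \recht U(n)$ via $\tilde\pi : M_0 \recht \M_n(\C) \ot M_0$, $\tilde\pi(a) = 1 \ot a$ for $a \in N$ and $\tilde\pi(u_g) = \pi(g) \ot u_g$; under this dictionary, assumption \ref{2B} becomes the statement that the only such $\pi$ is trivial. First, I would identify $\cF_0 \iso \N[\Gamma]$: the decomposition $\rL^2(M_0) = \bigoplus_{g \in \Gamma} u_g \rL^2(N)$ exhibits $N$-$N$-subbimodules $K_g := u_g \rL^2(N)$ that are irreducible (their endomorphism algebra is $N' \cap N = \C$) and pairwise non-isomorphic (since $K_g \iso K_h$ would force $\alpha_{g^{-1}} \alpha_h \in \Inn(N)$, contradicting outerness), with fusion $K_g \ot_N K_h \iso K_{gh}$ via multiplication in $M_0$ and $\overline{K_g} \iso K_{g^{-1}}$. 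In the easy direction, given an irreducible non-trivial finite-dimensional unitary representation $\pi$, the homomorphism $\tilde\pi$ is unital and $*$-preserving, and outerness together with irreducibility of $\pi$ yields $\tilde\pi(M_0)' \cap (\M_n(\C) \ot M_0) = \pi(\Gamma)' \cap \M_n(\C) = \C$, so $H(\tilde\pi)$ is an irreducible $M_0$-$M_0$-bimodule of Jones index $n^2$; its $N$-$N$-restriction is $n \cdot \bigoplus_g K_g \subset \cF_0$, and $H(\tilde\pi) \not\iso \rL^2(M_0)$, violating \ref{2B}.

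For the converse, suppose $\cK$ is an irreducible finite-index $M_0$-$M_0$-bimodule containing some $K_{g_0}$ as $N$-$N$-subbimodule. Since every translate $u_g L u_{g'}$ of an $N$-$N$-subbimodule $L \iso K_h$ is isomorphic to $K_{g h g'}$, the $M_0$-$M_0$-subbimodule of $\cK$ generated by $K_{g_0}$ has $N$-$N$-decomposition entirely in $\cF_0$; irreducibility of $\cK$ forces this to be all of $\cK$. Writing $\cK|_{N\text{-}N} = \bigoplus_h V_h \ot K_h$ with multiplicity spaces $V_h$, left-multiplication by $u_g$ gives unitary identifications $V_h \iso V_{gh}$, so $\dim V_h \equiv n$ is constant and $\cK \iso n \cdot \rL^2(M_0)$ as $N$-$N$-bimodules. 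Realising $\cK \iso H(\psi)$ for a unital $*$-homomorphism $\psi : M_0 \recht \M_n(\C) \ot M_0$, this $N$-$N$-bimodule isomorphism provides, by the uniqueness criterion for $H(\cdot)$, a unitary $u \in \M_n(\C) \ot M_0$ with $\psi(a) = u (1 \ot a) u^*$ for all $a \in N$; after conjugation I may therefore assume $\psi|_N$ is the standard inclusion $a \mapsto 1 \ot a$. Then the covariance $\psi(u_h)(1 \ot a)\psi(u_h)^* = 1 \ot \alpha_h(a)$, combined with the Fourier expansion $\psi(u_h) = \sum_k T_k (1 \ot u_k)$ with $T_k \in \M_n(\C) \ot N$, yields $T_k \cdot b = \alpha_{h k^{-1}}(b) \cdot T_k$ for every $b \in N$. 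Outerness of $\alpha$ kills $T_k$ for $k \neq h$ and forces $T_h \in \M_n(\C) \ot (N \cap N') = \M_n(\C)$. Setting $\pi(h) := T_h$ and using multiplicativity of $\psi$ on the $u_g$'s, one obtains a unitary representation $\pi : \Gamma \recht U(n)$ with $\psi = \tilde\pi$; irreducibility of $\cK$ gives irreducibility of $\pi$, and $\cK \iso \rL^2(M_0)$ precisely when $\pi$ is trivial. Thus a failure of \ref{2B} produces a non-trivial finite-dimensional unitary representation of $\Gamma$.

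The main technical obstacle is the reduction to $\psi|_N(a) = 1 \ot a$: it requires first closing the $N$-$N$-decomposition of $\cK$ under $\cF_0$ (using irreducibility of $\cK$), and then using transitivity of the left $\Gamma$-action to equalise the multiplicities $\dim V_h = n$, so that $\cK|_{N\text{-}N}$ is identified with $n$ copies of $\rL^2(M_0)$ as an $N$-$N$-bimodule. Once this normalisation is in place, the extraction of the representation $\pi$ from the Fourier components of $\psi(u_h)$ is a routine crossed-product computation exploiting the outerness of $\alpha$ on $N$.
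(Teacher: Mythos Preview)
Your overall strategy coincides with the paper's: identify $\cF_0$ with $\N[\Gamma]$ via the bimodules $K_g = H(\alpha_g)$, build the counterexample $H(\tilde\pi)$ from a nontrivial $\pi$, and in the converse direction normalize to $\psi|_N(a)=1\ot a$ and read off a representation from the Fourier coefficients of $\psi(u_h)$. The Fourier-coefficient extraction and the easy direction are fine.

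There is, however, a genuine gap at the normalization step. From the multiplicity count you correctly obtain an $N$-$N$-bimodule isomorphism $\cK|_{N\text{-}N}\cong n\cdot \rL^2(M_0)$, but you then invoke ``the uniqueness criterion for $H(\cdot)$'' to produce a unitary $u\in\M_n(\C)\ot M_0$ with $\psi(a)=u(1\ot a)u^*$ for $a\in N$. That criterion produces a unitary in matrices over the \emph{right}-acting algebra; to obtain $u$ in matrices over $M_0$ you need an $N$-$M_0$-bimodule isomorphism, not merely an $N$-$N$ one. Equivalently, your assertion that $\cK\cong H(\psi)$ with $\psi$ \emph{unital} into $\M_n(\C)\ot M_0$ presupposes $\dim(\cK_{M_0})=n$, which your $N$-$N$ computation does not establish. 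Concretely, the $K_e$-isotypic component $W_e$ consists of $\rL^2$-vectors $\xi$ with $\psi(a)\xi=\xi a$ for $a\in N$; to promote these to $N$-$M_0$-intertwiners $\rL^2(M_0)\to\cK$ you must show such $\xi$ are right-$M_0$-bounded.

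This is exactly what the paper supplies and you omit: writing $\cK\cong\bim{\psi(M_0)}{p(\C^m\ot\rL^2(M_0))}{M_0}$, one observes that $A:=\psi(N)'\cap pM_0^m p$ is finite dimensional (because $N\subset M_0$ is irreducible and $\psi$ has finite index), so for each minimal projection $q\in A$ any nonzero $\xi\in q(\C^m\ot\rL^2(M_0))$ with $\psi(a)\xi=\xi a$ satisfies $\xi\xi^*\in q(\M_m\ot\rL^1(M_0))q\cap\psi(N)'$, hence $\xi\xi^*$ is a scalar multiple of $q$ and $\xi$ is bounded with $\xi^*\xi\in N'\cap M_0=\C$. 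Assembling these over a maximal family of minimal projections in $A$ yields the unitary $X$ with $X^*\psi(a)X=1\ot a$ for $a\in N$; comparing $K_e$-multiplicities then indeed forces the size to be $n$. Once you insert this $\rL^1$-boundedness step, your argument is complete and essentially identical to the paper's.
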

\begin{proof}
Since $\Gamma \actson N$ is outer, the inclusion $N \subset M_0$ is irreducible. Denote by $(u_g)_{g \in \Gamma}$ the canonical unitaries in $M_0$. Since every $au_g$, $g \in \Gamma$, $a \in \cU(N)$, normalizes $N$, the inclusion $N \subset M_0$ is regular.

If $\pi : \Gamma \recht \cU(H)$ is a non-trivial irreducible finite dimensional unitary representation, define $\cK = H \ot \rL^2(M_0)$ with bimodule action given by
$$(a u_g) \cdot \xi = (\pi(g) \ot au_g) \xi \quad\text{and}\quad \xi \cdot b = \xi b \quad\text{for all}\;\; a \in N, g \in \Gamma, b \in M_0 \; .$$
Then, $\bim{M_0}{\cK}{M_0}$ is an irreducible finite index $M_0$-$M_0$-bimodule and $\bim{M_0}{\cK}{M_0} \not\cong \bim{M_0}{\rL^2(M_0)}{M_0}$. Nevertheless, we have the $N$-$N$-subbimodule $H \ot \rL^2(N) \subset \cK$, which is a sum of $\dim(H)$ copies of the trivial $N$-$N$-bimodule and hence, belongs to $\cF_0$. So, assumption \ref{2B} fails.

Conversely, suppose that $\Gamma$ has no non-trivial finite dimensional unitary representations and denote the action by $\Gamma \overset{\rho}{\actson} N$. Observe that the irreducible elements in $\cF_0$ are precisely the $N$-$N$-bimodules $H(\rho_s)$, $s \in \Gamma$, defined on the Hilbert space $\rL^2(N)$ with bimodule action
$$a \cdot \xi \cdot b := a \xi \rho_s(b) \quad\text{for all}\;\; \xi \in \rL^2(N), a,b \in N \; .$$
Let $\bim{M_0}{\cK}{M_0}$ be an irreducible finite index $M_0$-$M_0$-bimodule. Let $\cH \subset \cK$ be an $N$-$N$-subbimodule such that $\bim{N}{\cH}{N} \cong \bim{N}{H(\rho_s)}{N}$ for a certain $s \in \Gamma$. By irreducibility of $\cK$, it follows that the span of all $u_r \cdot \cH \cdot u_k$, $r,k \in \Gamma$, is dense in $\cK$. So, $\cK$ decomposes as a direct sum of $N$-$N$-subbimodules, each of them being isomorphic to one of the $H(\rho_r)$, $r \in \Gamma$. Multiplying on the right by $u_r$, it follows that every $N$-$M_0$-subbimodule of $\cK$ contains the trivial $N$-$N$-bimodule as an $N$-$N$-subbimodule.

Write $\bim{M_0}{\cK}{M_0} \cong \bim{\psi(M_0)}{p (\C^n \ot \rL^2(M_0))}{M_0}$. Since $\cK$ has finite index and $N \subset M_0$ is irreducible, the relative commutant $A:= \psi(N)' \cap p M_0^n p$ is finite dimensional. By the previous paragraph, we find for every minimal projection $q \in A$, a non-zero vector $\xi \in q (\C^n \ot \rL^2(M_0))$ satisfying $\psi(a) \xi = \xi a$ for all $a \in N$. As an element of $q(\M_n(\C) \ot \rL^1(M_0))q$, the operator $\xi \xi^*$ commutes with $\psi(N)$ and hence, is a multiple of $q$. So, we may assume that $\xi \in q(\C^n \ot M_0)$ and $\xi \xi^* = q$. Since $N \subset M_0$ is irreducible, $\xi^* \xi = 1$. Repeating this procedure for a family of minimal projections in $A$ summing to $1$, we find a unitary $X \in q(\C^n (\C^m)^* \ot M_0)$ such that $X^* \psi(a) X = 1 \ot a$ for all $a \in N$.

So, we may assume that $p = 1$ and $\psi(a) = 1 \ot a$ for all $a \in N$. But then, $\psi(u_g) = \pi(g) \ot u_g$ for all $g \in \Gamma$. Since $\Gamma$ has no finite dimensional unitary representations, it follows that $\pi(g) = 1$. By irreducibility of $\cK$, it follows that $n=1$ and $\bim{M_0}{\cK}{M_0} \cong \bim{M_0}{\rL^2(M_0)}{M_0}$.
\end{proof}

\begin{lemm} \label{lem.absence2}
Let $\Gamma$ be a countable group and $\Lambda < \Gamma$ an almost normal subgroup. Let $\Omega \in \rZ^2(\Gamma,S^1)$ be a scalar $2$-cocycle. Put $N := \rL_\Omega(\Lambda)$ and $M_0 := \rL_\Omega(\Gamma)$. If the following conditions hold, the inclusion $N \subset M_0$ satisfies assumption \ref{2B}.
\begin{itemize}
\item For all finite index subgroups $\Lambda_0 < \Lambda$, we have $\rL_\Omega(\Lambda_0)' \cap \rL_\Omega(\Gamma) = \C 1$.
\item The group $\Gamma$ has no non-trivial finite dimensional unitary representations.
\end{itemize}
\end{lemm}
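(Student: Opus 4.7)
The plan is to adapt the proof of Lemma \ref{lem.absence1} to the almost normal setting, exploiting the fact that although $u_g$ for $g \in \Gamma \setminus \Lambda$ does not normalize $N$, it conjugates the finite-index subalgebra $\rL_\Omega(K'_g)$ isomorphically onto $\rL_\Omega(K_g)$, where $K_g := \Lambda \cap g\Lambda g^{-1}$ and $K'_g := \Lambda \cap g^{-1}\Lambda g$. Since $N \subset M_0$ is quasi-regular by almost normality, Lemma \ref{lem.decompose-quasireg} yields $\rL^2(M_0) = \bigoplus_{\Lambda g \Lambda} H_g$ as a direct sum of irreducible finite-index $N$-$N$-subbimodules indexed by $\Lambda$-$\Lambda$-double cosets, and these $H_g$ are precisely the irreducible elements of $\cF_0$. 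Writing $\cK \cong H(\psi)$ for an irreducible finite-index $*$-homomorphism $\psi : M_0 \to pM_0^n p$, the hypothesis gives an $N$-$N$-bimodular embedding $H_g \hookrightarrow \cK$ for some $g \in \Gamma$.

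Next I would produce the trivial $N$-$N$-bimodule inside $\cK$, yielding a nonzero $v \in p(\C^n \otimes \rL^2(M_0))$ with $\psi(a)v = va$ for all $a \in N$. The first hypothesis with $\Lambda_0 = \Lambda$ gives $N \subset M_0$ irreducible, and the standard finite-index principle then renders $\psi(N)' \cap pM_0^n p$ finite-dimensional; thus $vv^*$ is affiliated to this finite-dimensional algebra, so $v$ is bounded, and $v^*v \in N' \cap M_0 = \C$ normalizes to $v^*v = 1$. Iterating over a family of minimal projections summing to the identity of $\psi(N)' \cap pM_0^n p$, exactly as in Lemma \ref{lem.absence1}, produces a unitary $X$ such that after conjugation $\psi : M_0 \to \M_m(\C) \otimes M_0$ satisfies $\psi(a) = 1 \otimes a$ for every $a \in N$.

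The central new ingredient is the analysis of $\psi(u_g)$ for $g \in \Gamma$. Write $\psi(u_g) = v_g(1 \otimes u_g)$ for a unitary $v_g \in \M_m(\C) \otimes M_0$. For any $\mu \in K'_g$ the element $u_g u_\mu u_g^*$ lies in $\rL_\Omega(K_g) \subset N$, so applying $\psi$ yields $\psi(u_g)(1 \otimes u_\mu)\psi(u_g)^* = \psi(u_g u_\mu u_g^*) = 1 \otimes u_g u_\mu u_g^*$; substituting $\psi(u_g) = v_g(1 \otimes u_g)$ this rearranges to $v_g(1 \otimes u_g u_\mu u_g^*)v_g^* = 1 \otimes u_g u_\mu u_g^*$. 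As $\mu$ varies over $K'_g$ the elements $u_g u_\mu u_g^*$ span $\rL_\Omega(K_g)$, so $v_g$ commutes with $1 \otimes \rL_\Omega(K_g)$. The first hypothesis with $\Lambda_0 = K_g$ gives $\rL_\Omega(K_g)' \cap M_0 = \C$, forcing $v_g = \pi(g) \otimes 1$ for some $\pi(g) \in \cU(\M_m(\C))$, whence $\psi(u_g) = \pi(g) \otimes u_g$. Multiplicativity of $\psi$ together with the cancellation of the cocycle $\Omega$ shows that $\pi : \Gamma \to \cU(\M_m(\C))$ is a genuine (not merely projective) group homomorphism. The second hypothesis then forces $\pi \equiv 1$, so $\psi = 1 \otimes \id$; irreducibility of $\cK$ gives $m = 1$ and $\cK \cong \rL^2(M_0)$.

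The main obstacle I anticipate is the first step, producing the trivial $N$-$N$-bimodule inside $\cK$: since $u_g$ fails to normalize $N$ when $g \notin \Lambda$, one cannot simply translate the embedded $H_g$ to $H_e$ inside $\cK$ as in Lemma \ref{lem.absence1}. My approach will be to combine the $M_0$-$M_0$-irreducibility of $\cK$ with the identity $H_g \otimes_N H_{g^{-1}} \supset H_e$ and a careful use of Lemma \ref{lem.product}(2) applied to the right $M_0$-action on $\cK$, so as to manufacture an $N$-$N$-bimodular map from $H_e$ into $\cK$ out of the given embedding of $H_g$.
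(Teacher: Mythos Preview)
Your third paragraph --- the analysis of $\psi(u_g)$ once $\psi|_N = 1\otimes\id$ --- is essentially the paper's endgame and is correct. The difficulties lie earlier.

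\textbf{First gap.} You assert that the irreducible elements of $\cF_0$ are exactly the double-coset bimodules $H_g$. But $\cF_0$ is the fusion subalgebra \emph{generated} by the finite-index $N$-$N$-subbimodules of $\rL^2(M_0)$, so it is closed under Connes tensor products and under passage to subbimodules of such products. Nothing you have written excludes the possibility that $H_g \otimes_N H_h$ contains an irreducible not isomorphic to any $H_k$; Lemma~\ref{lem.decompose-quasireg} only decomposes $\rL^2(M_0)$, not the whole of $\cF_0$. So when assumption~\ref{2B} hands you an arbitrary element of $\cF_0$ inside $\cK$, you are not entitled to assume it is some $H_g$.

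\textbf{Second gap.} Even granting an embedding $\iota:H_g\hookrightarrow\cK$, your plan to locate the trivial $N$-$N$-bimodule in $\cK$ does not go through. The natural candidate $\xi:=\iota(u_g)\cdot u_g^*$ satisfies $a\xi=\xi a$ only for $a\in\rL_\Omega(K_g)$, not for all $a\in N$: the computation you give in your third paragraph, read backwards, shows exactly this. Lemma~\ref{lem.product}(2) tells you that the closure of $\iota(H_g^0)\cdot H_{g^{-1}}^0$ is isomorphic to a subbimodule of $H_g\otimes_N H_{g^{-1}}$, but gives no control over \emph{which} subbimodule, so you cannot conclude it contains $H_e$. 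Consequently you cannot conjugate $\psi|_N$ to $1\otimes\id$; at best you get $\psi|_{\rL_\Omega(\Lambda_0)}=1\otimes\id$ for some finite-index $\Lambda_0<\Lambda$, and even this requires iterating over minimal projections of $\psi(N)'\cap pM_0^np$ and intersecting the resulting subgroups.

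\textbf{How the paper handles this.} The paper avoids both issues by introducing the notion of a bimodule \emph{of good form}: one isomorphic to $\bim{\gamma(N)}{(\C^n\otimes\rL^2(N))}{N}$ where $\gamma(u_h)=\sum_i e_{ii}\otimes u_{g_i}^*u_hu_{g_i}$ for all $h$ in some finite-index $\Lambda_0<\Lambda$. It then proves, using the first hypothesis, that being of good form is stable under subbimodules and tensor products, and that $\rL^2(M_0)$ decomposes into such; hence every element of $\cF_0$ is of good form. From this structure one extracts, for each minimal projection in $\psi(N)'\cap pM_0^mp$, a partial isometry intertwining $\psi$ and $1\otimes\id$ on some finite-index $\rL_\Omega(\Lambda_0)$, and after intersecting finitely many such $\Lambda_0$'s one conjugates $\psi|_{\rL_\Omega(\Lambda_0)}$ to $1\otimes\id$. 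Your third-paragraph argument then applies verbatim with $K_g$ replaced by $\Lambda_0\cap g\Lambda_0 g^{-1}$. The point is that the paper never claims to find the trivial $N$-$N$-bimodule in $\cK$ directly; it settles for trivialising $\psi$ on a finite-index subgroup, which is both what one can actually achieve and all that the first hypothesis requires.
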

\begin{proof}
Define the fusion subalgebra $\cF_0$ of $\FAlg(N)$ as in assumption \ref{2B}. We first claim that every $\bim{N}{\cH}{N}$ in $\cF_0$ \emph{is of good form,} by which we mean that there exists $n$, a finite index inclusion $\gamma : N \recht N^n$, elements $g_1,\ldots,g_n \in \Gamma$ and a finite index subgroup $\Lambda_0 < \Lambda \cap \bigcap_{i=1}^n g_i \Lambda g_i^{-1}$ such that $\bim{N}{\cH}{N} \cong \bim{\gamma(N)}{(\C^n \ot \rL^2(N))}{N}$ and
\begin{equation}\label{eq.ourclaim}
\gamma(u_h) = \sum_{i=1}^n e_{ii} \ot u_{g_i}^* u_h u_{g_i} \quad\text{for all}\;\; h \in \Lambda_0 \; .
\end{equation}
We now prove the following three statements. 1)~If $\bim{N}{\cH}{N}$ is of good form, then the same holds for all its $N$-$N$-subbimodules. 2)~If $\bim{N}{\cH}{N}$ and $\bim{N}{\cH'}{N}$ are both of good form, then $\cH' \ot_N \cH$ is again of good form. 3)~We can decompose $\rL^2(M_0)$ as a direct sum of $N$-$N$-subbimodules of good form.

It is obvious that being of good form is preserved by direct sums. Furthermore, because $\bim{N}{\rL^2(M_0)}{N}$ is isomorphic with its contragredient, $\cF_0$ is the smallest set of finite index $N$-$N$-bimodules (up to isomorphism), containing the finite index $N$-$N$-subbimodules of $\rL^2(M_0)$ and being stable under direct sums, tensor products and subbimodules. Hence, once statements 1, 2 and 3 are proven, our claim is proven as well.

Proof of 1). Let $\gamma$, $g_1,\ldots,g_n \in \Gamma$ and $\Lambda_0$ be given as above. If $h_1,\ldots,h_n \in \Lambda$ and if we replace $g_i$ by $g_i h_i$ and $\gamma$ by $a \mapsto W^* \gamma(a) W$, where $W \in N^n$ is given by $W := \sum_{i=1}^n e_{ii} \ot u_{h_i}$, formula \eqref{eq.ourclaim} still holds. So, we may assume that whenever $g_j \in g_i \Lambda$, then $g_j = g_i$. Let $s_1,\ldots,s_k$ be the distinct elements of $\Gamma$ such that $\{g_1,\ldots,g_n\} = \{s_1,\ldots,s_k\}$. By construction, $s_j \not\in s_i \Lambda$ whenever $i \neq j$. We have obvious diagonal projections $p_i \in \M_n(\C)$, summing to $1$ and satisfying
\begin{equation}\label{eq.pisi}
\gamma(u_h) = \sum_{i=1}^k p_i \ot u_{s_i}^* u_h u_{s_i} \; .
\end{equation}
Put $Z := \sum_{i=1}^k p_i \ot u_{s_i}$, which is a unitary in $M_0^n$. It follows that $\gamma(u_h) = Z^* (1 \ot u_h) Z$ for all $h \in \Lambda_0$ and hence,
\begin{equation}\label{eq.commutant}
\gamma(N)' \cap N^n \subset Z^* \bigl( (1 \ot \rL_\Omega(\Lambda_0))' \cap M_0^n \bigr) Z \subset Z^* (\M_n(\C) \ot 1) Z \; .
\end{equation}

Let now $q \in \gamma(N)' \cap N^n$ be a projection. We have to prove that the $N$-$N$-bimodule defined by $a \mapsto \gamma(a) q$ is again of good form. Because of \eqref{eq.pisi}, it suffices to prove that $q = r \ot 1$, where $r \in \M_n(\C)$ and $rp_i = p_i r$ for all $i =1,\ldots,k$. By \eqref{eq.commutant}, $q = Z^*(t \ot 1) Z$ for some $t \in \M_n(\C)$. Let $i \neq j$. Then, $(p_i \ot 1)q (p_j \ot 1)$ belongs to $\M_n(\C) \ot N$ and to $\M_n(\C) \ot \C u_{s_i^{-1} s_j}$. Hence, $(p_i \ot 1)q (p_j \ot 1) = 0$. But then,
$$q = \sum_{i=1}^k (p_i \ot 1)q (p_i \ot 1) = \sum_{i=1}^k p_i t p_i \ot 1 \; ,$$
concluding the proof of 1).

Proof of 2). Suppose that $\bim{N}{\cH}{N}$ and $\bim{N}{\cH'}{N}$ are of good form. Suppose that $\gamma : N \recht N^n$ defines $\bim{N}{\cH}{N}$ and satisfies \eqref{eq.ourclaim} with respect to $g_1,\ldots,g_n$, $\Lambda_0 < \Lambda \cap \bigcap_{i=1}^n g_i \Lambda g_i^{-1}$. Suppose that $\rho : N \recht N^m$ defines $\bim{N}{\cH'}{N}$ and satisfies \eqref{eq.ourclaim} with respect to $h_1,\ldots,h_m$ and $\Lambda_1 < \Lambda \cap \bigcap_{j=1}^m h_j \Lambda h_j^{-1}$. Then the composition $(\id \ot \gamma)\rho$ satisfies \eqref{eq.ourclaim} with respect to the group elements $h_j g_i$ and the subgroup $\Lambda_1 \cap \bigcap_{j=1}^m h_j \Lambda_0 h_j^{-1}$. It follows that $\cH' \ot_N \cH$ is of good form.

Proof of 3). For every $g \in \Gamma$, define $\cH(g)$ as the closure of $N u_g N$ in $\rL^2(M_0)$. Write
$$\Lambda g \Lambda = \bigsqcup_{i=1}^n g_i \Lambda \; .$$
The map $e_i \ot a \recht u_{g_i} a$ extends to a unitary $V : \C^n \ot \rL^2(N) \recht \cH(g)$ satisfying $V(\xi a) = V(\xi) a$ for all $\xi \in \C^n \ot \rL^2(N)$ and $a \in N$. Define $\gamma : N \recht N^n$ such that $V(\gamma(a) \xi) = a V(\xi)$ for all $\xi \in \C^n \ot \rL^2(N)$ and $a \in N$. One checks that $\gamma$ satisfies \eqref{eq.ourclaim} with respect to $g_1,\ldots,g_n$ and the subgroup $\Lambda_0 = \Lambda \cap \bigcap_{i=1}^n g_i \Lambda g_i^{-1}$. Since the linear span of all $\cH(g)$, $g \in \Gamma$, is dense in $\rL^2(M_0)$, the final statement 3) is proven and hence, also our claim.

Fix now an irreducible finite index $M_0$-$M_0$-bimodule $\bim{M_0}{\cK}{M_0}$ and let $\cH \subset \cK$ be an irreducible $N$-$N$-subbimodule such that $\bim{N}{\cH}{N}$ belongs to $\cF_0$. By Lemma \ref{lem.product}, for every $g,h \in \Gamma$, the closure of $N u_g \cdot \cH \cdot u_h N$ inside $\cK$ is isomorphic with an $N$-$N$-subbimodule of $\cH(g) \ot_N \cH \ot_N \cH(h)$. Hence, this closure belongs to $\cF_0$ and the irreducibility of $\cK$ implies that $\cK$ is a direct sum of $N$-$N$-subbimodules that all belong to $\cF_0$.

Write $\bim{M_0}{\cK}{M_0} \cong \bim{\psi(M_0)}{p(\C^m \ot \rL^2(M_0))}{M_0}$ for some finite index irreducible inclusion $\psi : M_0 \recht pM_0^m p$. Define $A := p M_0^m p \cap \psi(N)'$. Since $\cK$ is of finite index and $N \subset M_0$ is irreducible, it follows that $A$ is finite dimensional. Let $q \in A$ be a minimal projection. Because of the previous paragraph, we find a finite index inclusion $\gamma : N \recht N^n$ satisfying \eqref{eq.ourclaim} with respect to $g_1,\ldots,g_n$ and $\Lambda_0$ and a bimodular isometry
$$\theta : \bim{\gamma(N)}{(\C^n \ot \rL^2(N))}{N} \recht \bim{\psi(N)q}{q(\C^m \ot \rL^2(M_0))}{N} \; .$$
Define $\xi \in q(\C^m (\C^n)^* \ot \rL^2(M_0))$ by the formula
$$\xi := \sum_{i=1}^n \theta(e_i \ot 1) (e_i^* \ot 1) \; .$$
It follows that $\xi$ is non-zero and satisfies $\psi(a) \xi = \xi \gamma(a)$ for all $a \in N$. As an element of $q(\M_m(\C) \ot \rL^1(M_0))q$, the operator $\xi \xi^*$ commutes with $\psi(N)$. Hence, $\xi \xi^*$ is a multiple of $q$ and we may assume that $\xi \in q(\C^m(\C^n)^* \ot M_0)$ with $\xi \xi^* = q$. Define $V = \xi (\sum_i e_{ii} \ot u_{g_i}^*)$. It follows that $V \in q(\C^m (\C^n)^* \ot M_0)$, $VV^* = q$ and $\psi(u_h) V = V(1 \ot u_h)$ for all $h \in \Lambda_0$. Since $\rL_\Omega(\Lambda_0)$ has trivial relative commutant in $M_0$, it follows that $V^* V = p_1 \ot 1$ for some projection $p_1 \in \M_n(\C)$. Hence, we can find $W \in q(\C^m (\C^k)^* \ot M_0)$ satisfying $WW^* = q$, $W^* W = 1$ and $\psi(u_h) W = W (1 \ot u_h)$ for all $h$ in a finite index subgroup of $\Lambda$.

Repeating the same procedure for a set of minimal projections in $A$ summing to $1$ and taking the intersection of all finite index subgroups of $\Lambda$ involved, we find a unitary $X \in p(\C^m (\C^r)^* \ot M_0)$ and a finite index subgroup $\Lambda_0 < \Lambda$ satisfying $X^* \psi(u_h) X = 1 \ot u_h$ for all $h \in \Lambda_0$. So, we may actually assume that $\psi(u_h) = 1 \ot u_h$ for all $h \in \Lambda_0$.

If now $g \in \Gamma$, we get that $\psi(u_g) (1 \ot u_g^*)$ commutes with $1 \ot u_h$ for all $h \in \Lambda_0 \cap g \Lambda_0 g^{-1}$. Hence, $\psi(u_g) = \pi(g) \ot u_g$ for all $g \in \Gamma$, where $\pi : \Gamma \recht \cU(\C^r)$ is a finite dimensional unitary representation. By assumption, $\pi(g) = 1$ for all $g \in \Gamma$. So, $\psi(a) = 1 \ot a$ for all $a \in M_0$. By irreducibility of $\cK$, we get $r = 1$ and $\bim{M_0}{\cK}{M_0} \cong \bim{M_0}{\rL^2(M_0)}{M_0}$.
\end{proof}

In fact, we only use the following concrete example satisfying the conditions of Lemma \ref{lem.absence2} and hence providing an inclusion $N \subset M_0$ satisfying assumption \ref{2}, with $N$ being isomorphic with the hyperfinite II$_1$ factor. Moreover $N \subset M_0$ satisfies assumption \ref{3}.

\begin{exam} \label{exam.our-example}
Consider the group $\Gamma = (\Q^3 \oplus \Q^3) \rtimes \SL(3,\Q)$, defined by the action $A \cdot (x,y) = (Ax , (A^t)^{-1} y)$ of $\SL(3,\Q)$ on $\Q^3 \oplus \Q^3$. Choose an irrational number $\al \in \R$ and define $\Omega \in \rZ^2(\Gamma,S^1)$ such that
\begin{align*}
    \Omega\bigl( (x,y),(x',y') \bigr) &= \exp\bigl(i\al (\langle x,y'\rangle - \langle y, x'\rangle)\bigr) \qquad\text{for all}\;\; (x,y), (x',y') \in \Q^3 \oplus \Q^3 \; , \\
    \Omega(g,A) &= \Omega(A,g) = \Omega(A,B) = 1 \qquad\text{for all}\;\; g \in \Gamma \; , \; A,B \in \SL(3,\Q) \; .
\end{align*}
Set $\Lambda = \Z^3 \oplus \Z^3$. We define $N := \rL_\Omega(\Lambda)$ and $M_0 := \rL_\Omega(\Gamma)$. We prove that $N \subset M_0$ satisfies assumptions \ref{2} and \ref{3}.

Assumption \ref{3} follows by taking $N_0 := \rL_\Omega\bigl( (\Z^3 \oplus \Z^3) \rtimes \SL(3,\Z) \bigr)$, which has property (T) because $(\Z^3 \oplus \Z^3) \rtimes \SL(3,\Z)$ is a property (T) group.

Since $\SL(3,\Q)$ has no non-trivial finite-dimensional unitary representations and since the smallest normal subgroup of $\Gamma$ containing $\SL(3,\Q)$ is the whole of $\Gamma$, it follows that $\Gamma$ has no non-trivial finite-dimensional unitary representations. Because of Lemma \ref{lem.absence2}, it remains to prove that for every finite index subgroup $\Lambda_0 < \Lambda$, we have $\rL_\Omega(\Gamma) \cap \rL_\Omega(\Lambda_0)' = \C 1$. Write $\Lambda_1 = \Q^3 \oplus \Q^3$. Take $a \in \rL_\Omega(\Gamma) \cap \rL_\Omega(\Lambda_0)'$ and write, with $\eL^2$-convergence, $a = \sum_{g \in \Gamma} a_g u_g$. Since
$$\{h g h^{-1} \mid h \in \Lambda_0 \}$$
is infinite for all $g \in \Gamma - \Lambda_1$, we immediately get $a_g = 0$ for all $g \in \Gamma - \Lambda_1$. On the other hand, define
$$\pi : \Lambda_1 \recht \widehat{\Lambda_1} : h \mapsto \pi_h \quad\text{where}\;\; \pi_h(g) = \Omega(h,g)^{-2} \; .$$
It follows that $a_g \pi_g(h) = a_g$ for all $g \in \Lambda_1$ and all $h \in \Lambda_0$. If $g \in \Lambda_1 - \{0\}$, the character $\pi_g$ is not identically $1$ on $\Lambda_0$ and hence, $a_g = 0$. It follows that $a \in \C 1$.
\end{exam}

We deduce Theorem \ref{thm.main} from the following general statement.

\begin{theo}\label{thm.thm}
Under the assumptions at the beginning of the section and writing $M = M_0 *_N M_1$, the action $G \actson M$ is minimal and the natural tensor functor defined in \ref{prop.tensorfunctor}
$$\Rep(G) \droite \Bimod(M^G) : \pi \mapsto \bim{{M^G}}{\Mor(H_\pi,\eL^2(M))}{M^G}$$
is an equivalence of categories.
\end{theo}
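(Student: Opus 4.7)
The functor $\pi \mapsto H(\psi_\pi)$ is already fully faithful by Proposition~\ref{prop.tensorfunctor}, so the plan is to establish two remaining ingredients: minimality of $G \actson M$ and essential surjectivity of the functor. Let $P := M^G$.

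For minimality, injectivity of $G \to \Aut(M)$ is inherited directly from the minimal action $\sigma$ on $M_1$. I would first verify that $M$ is a II$_1$ factor by standard amalgamated free product arguments, using factoriality of $M_0$ together with the fact that minimality of $\sigma$ forces $N' \cap M_1 = \C 1$; a short averaging argument over $G$ then yields that $P$ is a II$_1$ factor as well. For $M \cap P' = \C 1$, I would decompose $\rL^2(M) = \bigoplus_{\pi \in \irr(G)} L(\pi)$ and observe that any $x \in M \cap P'$ has each $G$-isotypic component $x_\pi$ a $P$-central vector in $L(\pi) \cong \dim(\pi) \cdot H(\psi_\pi)$. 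Full faithfulness of the functor (Proposition~\ref{prop.tensorfunctor}) implies $H(\psi_\pi)$ is irreducible and non-trivial for $\pi \neq \eps$, so $x_\pi = 0$ in that case, forcing $x \in L(\eps) \cap M = P$; factoriality of $P$ then gives $x \in \C 1$.

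For essential surjectivity, let $\cK$ be an irreducible finite index $P$-$P$-bimodule. Via Jones' tunnel construction, I would realize $\cK \cong \rL^2(P_1)$ for a basic construction $P_0 \subset P \subset P_1$, and then aim to identify $P_1$ in a $G$-compatible way with $(A \ot M)^{\al \ot \sigma}$ for some finite-dimensional $G$-algebra $(A,\alpha)$. To produce this identification, the plan is to set up an amalgamated free product extension of $M = M_0 *_N M_1$ compatible with $P \subset P_1$, inside an enlarged II$_1$ factor, and then apply Theorem~\ref{thm.IPP}(2) to the property (T) subalgebra $N_0 \subset P$ supplied by assumption~\ref{3}, locating a unitary conjugate of $N_0$ in the ``$M_0$-part" of the extension. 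Quasi-regularity of $N_0 \subset M_0$ combined with Theorem~\ref{thm.IPP}(1) will then push the full quasi-normalizer of $N_0$, and in particular a conjugate of $M_0$, into this same $M_0$-part.

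Decomposing $\bim{N}{\cK}{N}$ via Lemma~\ref{lem.decompose-quasireg}, each irreducible piece then lies in the fusion subalgebra of $\FAlg(N)$ generated by $\cF$ and the canonical image of $\Rep(G)$. The freeness assumption~\ref{4}, combined with Lemma~\ref{lemm.pi ot eta} to keep track of the corepresentations $V_\pi$ under tensor product, then forces the non-$M_0$ content of $\cK$ to be supported on a single irreducible representation $\pi \in \irr(G)$, which together with the analysis of spectral subspaces from Section~\ref{subsec.minimal} yields the desired isomorphism $\cK \cong H(\psi_\pi)$. The hardest step will be constructing the amalgamated free product extension that contains $P_1$ so that Theorem~\ref{thm.IPP} applies cleanly to $N_0$; once $N_0$ (and hence a copy of $M_0$) has been localized, the freeness of $\cF$ against the image of $\Rep(G)$ is what ultimately rules out any exotic irreducibles beyond those prescribed by $\Rep(G)$.
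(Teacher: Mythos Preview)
Your minimality argument has a circularity: you invoke Proposition~\ref{prop.tensorfunctor} to say $H(\psi_\pi)$ is an irreducible non-trivial $P$-$P$-bimodule, but that proposition is stated for a minimal action on $M$, which is precisely what you are trying to establish. The paper sidesteps this by working one level down: using assumption~\ref{4} it decomposes $\bim{N}{\rL^2(M)}{N}$ into irreducible $N$-$N$-bimodules and shows the trivial one occurs with multiplicity one, giving $N'\cap M=\C 1$ directly (Remark~\ref{rem.decomposition}). Your ``short averaging argument'' for factoriality of $P$ is also not clear; factoriality of $M^G$ is not automatic from factoriality of $M$ without minimality.

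The essential-surjectivity plan has a more serious gap. First, an irreducible finite index $P$-$P$-bimodule $\cK$ is essentially never of the form $\rL^2(P_1)$ for a basic construction $P_0\subset P\subset P_1$: one always has $\rL^2(P_1)\cong \rL^2(P)\ot_{P_0}\rL^2(P)$, which contains the trivial bimodule $\rL^2(P)$ as a summand, so $\rL^2(P_1)$ is irreducible only when $P_0=P$. Second, the proposed ``amalgamated free product extension of $M$ containing $P_1$'' is not something one can produce in general, and the paper does nothing of the sort. Instead, the paper realizes $\cH\cong\bim{\psi(P)}{p(\C^n\ot\rL^2(P))}{P}$ for a finite index inclusion $\psi:P\to pP^np$ and applies Theorem~\ref{thm.IPP} to $\psi(N_0)$ \emph{inside matrices over the given $M$}; this is exactly the setting in which Theorem~\ref{thm.IPP} is formulated. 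The work then proceeds through a chain of lemmas: locate a finite index $M_0$-$\psi_\eta(M_0)$-subbimodule (Lemmas~\ref{lem.step-1}--\ref{lem.step-2}), prove any such subbimodule must be trivial (Lemma~\ref{lem.step-3}), and finally use that $P$ is generated by the coefficients of the $\psi_\pi(M_0)$ (Lemma~\ref{lem.step-4}) to upgrade from $M_0$ to $P$.

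Finally, you never use assumption~\ref{2B}. In the paper this assumption is indispensable: it is what forces the $M_0$-$M_0$-bimodule found in Lemma~\ref{lem.step-3} to be trivial, and without it one cannot rule out exotic irreducible $M_0$-$M_0$-bimodules sitting inside $\cH$. The freeness assumption~\ref{4} alone (which is all your last paragraph invokes) only controls the $N$-$N$-structure, and the step ``each irreducible $N$-$N$-piece of $\cK$ lies in the fusion algebra generated by $\cF$ and $\Rep(G)$'' is itself the hard content of Lemma~\ref{lem.step-3}, requiring a delicate orthogonality argument, not a consequence of Lemma~\ref{lem.decompose-quasireg}.
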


The rest of this section is devoted to a proof of Theorem \ref{thm.thm} and deducing \ref{thm.main} as a corollary. Denote $P = M^G$ and make throughout the assumptions made at the beginning of the section.

Choose a complete set $\irr(G)$ of representatives for the set of irreducible unitary representations of $G$ modulo unitary conjugacy.
Since $G \overset{\si}{\actson} M_1$ is minimal, choose, for every $\pi \in \irr(G)$, a unitary $V_\pi \in \B(H_\pi) \ot M_1$ satisfying $(\id \ot \si_g)(V_\pi) = V_\pi (\pi(g) \ot 1)$ for all $g \in G$. Define the finite index inclusions
$$\psi_\pi : P \recht \B(H_\pi) \ot P : \psi_\pi(a) = V_\pi(1 \ot a)V_\pi^* \quad\text{for all}\;\; a \in P$$
and note that $\psi_\pi(N) \subset \B(H_\pi) \ot N$. As before, we denote by $H(\psi_\pi)$ the $N$-$N$-bimodule given by $\bim{N}{(H_\pi^* \ot \eL^2(N))}{{\psi_\pi(N)}}$.

\begin{rema} \label{rem.decomposition}
In Section \ref{subsec.minimal}, we explained that the $N$-$N$-bimodule $\bim{N}{\eL^2(M_1)}{N}$ can be decomposed into a direct sum of
$N$-$N$-bimodules $L(\pi)$, $\pi \in \irr(G)$, where $L(\pi)$ is isomorphic with $\dim(\pi)$ copies of $H(\psi_\pi)$. Furthermore $L(\pi)$ is the closure of $L^0(\pi) \subset M_1$, in such a way that the linear span of all $L^0(\pi)$ is an ultraweakly dense $*$-subalgebra of $M_1$.

Since $N \subset M_0$ is irreducible and quasi-regular, we can decompose $\rL^2(M_0) \ominus \rL^2(N)$ as the orthogonal direct sum of irreducible, non-trivial, finite index $N$-$N$-subbimodules $R_i, i \in I$. Put $R^0_i := R_i \cap M_0$. By Lemma \ref{lem.product}, $R^0_i$ is dense in $R_i$. By construction, $R^0_i \subset M_0 \ominus N$. Assume $0 \not\in I$ and put $R_0^0 := N$. By Lemma \ref{lem.decompose-quasireg}, $\lspan\{R_i^0 \mid i \in I \cup \{0\}\}$ equals $\QN_{M_0}(N)$ and is, in particular, an ultraweakly dense $*$-subalgebra of $M_0$.

Whenever $n \in \N \cup \{0\}$, $i_0,i_n \in I \cup \{0\}$, $i_1,\ldots,i_{n-1} \in I$ and $\pi_1,\ldots,\pi_n \in \irr(G) \setminus \{\eps\}$, denote by $R(i_0,\pi_1,\ldots,\pi_n,i_n)$ the closure of
$$R(i_0,\pi_1,\ldots,\pi_n,i_n)^0 := R^0_{i_0} L^0(\pi_1) R^0_{i_1} \cdots R^0_{i_{n-1}} L^0(\pi_n) R^0_{i_n}$$
inside $\rL^2(M)$. The definition of the amalgamated free product implies that $\rL^2(M)$ is the orthogonal direct sum of the subspaces $R(i_0,\pi_1,\ldots,\pi_n,i_n)$. Furthermore, the freeness with amalgamation implies that the map
$$a_0 \ot b_1 \ot a_1 \ot \cdots \ot b_n \ot a_n \mapsto a_0 b_1 a_1 \cdots b_n a_n$$
extends to a unitary from $R_{i_0} \ot_N L(\pi_1) \ot_N \cdots \ot_N L(\pi_n) \ot_N R_{i_n}$ onto $R(i_0,\pi_1,\ldots,\pi_n,i_n)$. In particular, $R(i_0,\pi_1,\ldots,\pi_n,i_n)$ is isomorphic with $\dim(\pi_1) \cdots \dim(\pi_n)$ copies of
\begin{equation}\label{eq.tussenmodule}
R_{i_0} \ot_N H(\psi_{\pi_1}) \ot_N \cdots \ot_N H(\psi_{\pi_n}) \ot_N R_{i_n} \; .
\end{equation}
Finally, the linear span of all $R(i_0,\pi_1,\ldots,\pi_n,i_n)^0$ is an ultraweakly dense $*$-subalgebra of $M$.

Denote by $\cF_0$ the fusion subalgebra of $\FAlg(N)$ generated by the finite index $N$-$N$-subbimodules $R_i, i \in I$.
Assumption \ref{4} implies in particular that the fusion subalgebra of $\FAlg(N)$ generated by $H(\psi_\pi)$, $\pi \in \irr(G)$, is free w.r.t.\ $\cF_0$. Therefore the $N$-$N$-bimodules appearing in \eqref{eq.tussenmodule} are irreducible and we have found a decomposition of $\bim{N}{\eL^2(M)}{N}$ as a direct sum of irreducible finite index $N$-$N$-bimodules. The trivial $N$-$N$-bimodule appears with multiplicity one in $\eL^2(M)$. This means that $N' \cap M = \C1$. In particular, the action $G \actson M$ is minimal.

The action of $G$ on $\rL^2(M)$ leaves every $R(i_0,\pi_1,\ldots,\pi_n,i_n)$ globally invariant and we denote by $R(i_0,\pi_1,\ldots,\pi_n,i_n)^G$ the subspace of $G$-fixed vectors. It follows that $\rL^2(P)$ is the orthogonal direct sum of the $N$-$N$-subbimodules $R(i_0,\pi_1,\ldots,\pi_n,i_n)^G$, which are, as $N$-$N$-bimodules isomorphic with a multiple of the $N$-$N$-bimodule given by \eqref{eq.tussenmodule}. This multiple is in its turn given by the dimension of the space of $(\pi_1 \ot \cdots \ot \pi_n)(G)$-invariant vectors in $H_{\pi_1} \ot \cdots \ot H_{\pi_n}$.
\end{rema}

\begin{lemm} \label{lem.step-1}
If $\bim{{M_0}}{\cH}{P}$ is an irreducible non-zero $M_0$-$P$-bimodule with $\dim(\cH_P) < \infty$, there exists $\eta \in \irr(G)$ and a non-zero $M_0$-$\psi_\eta(M_0)$-subbimodule $\cK \subset H_\eta^* \ot \cH$ with the following properties.
\begin{itemize}
\item $\dim(\cK_{\psi_\eta(M_0)}) < \infty$.
\item If $\rho \in \irr(G)$ and $\cL \subset H_\rho^* \ot \cH$ is a non-zero $M_0$-$\psi_{\rho}(M_0)$-subbimodule with $\dim(\lmo{{M_0}}{\cL}) < \infty$, then $\rho = \eta$ and $\cL \subset \cK$.
\end{itemize}
\end{lemm}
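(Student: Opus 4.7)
The natural strategy is to realize $H_\eta^* \otimes \cH$ as an isotypic component of a $G$-equivariantly extended bimodule. I would set $\tilde{\cH} := \cH \otimes_P \eL^2(M)$ and lift the right $G$-action on $\eL^2(M)$ to $\tilde{\cH}$ via $\id \otimes \si_g$. Using the orthogonal decomposition $\eL^2(M) = \bigoplus_{\pi \in \irr(G)} L(\pi)$ together with the identification $L(\pi) \cong H_\pi \otimes H(\psi_\pi)$ of $P$-$P$-bimodules from Section~\ref{subsec.minimal}, this produces
\[
\tilde{\cH} \;\cong\; \bigoplus_{\pi \in \irr(G)} H_\pi \otimes \bigl( H_\pi^* \otimes \cH \bigr)
\]
as $M_0$-$P$-bimodules, with right $P$-action $\psi_\pi$ on each summand. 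The right $M$-action on $\tilde{\cH}$ then restricts, under this identification, to a right $\psi_\eta(M_0)$-action on $H_\eta^* \otimes \cH$, which is the bimodule structure referred to in the lemma.

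For existence, multiplying $(\cH \otimes 1) \subset \tilde{\cH}$ on the right by the coefficients of $V_\eta \in \B(H_\eta) \otimes M_1$ (equivalently, by $L^0(\eta)$, using Lemma~\ref{lem.product} applied to the $N$-$N$-subbimodule $L(\eta) \subset \eL^2(M)$) yields non-zero vectors in the $\eta$-spectral component. I would define $\cK$ as the closed $M_0$-$\psi_\eta(M_0)$-subbimodule of $H_\eta^* \otimes \cH$ generated in this way. Finiteness of $\dim(\cK_{\psi_\eta(M_0)})$ should then follow from a Connes-fusion count: $\cH \otimes_P H(\psi_\eta)$ has right $\psi_\eta(P)$-dimension $\dim(\eta)\,\dim(\cH_P)$, and enlarging the right action from $\psi_\eta(P)$ to $\psi_\eta(M_0)$ preserves finiteness via Lemma~\ref{lem.finite-dim}, using the quasi-regularity of $N \subset M_0$ (Assumption~\ref{2A}). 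Some non-zero $\eta$ must work because $\cH \neq 0$ while the right $M_0$-action on $\tilde{\cH}$ does not preserve the trivial spectral component.

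For uniqueness, given $\rho$ and $\cL \subset H_\rho^* \otimes \cH$ non-zero with $\dim(\lmo{M_0}{\cL}) < \infty$, I view $\cL$ inside the $\rho$-component of $\tilde{\cH}$ and exploit that any $M_0$-$M_0$-bimodular map between distinct spectral components of $\tilde{\cH}$ factors through a $G$-intertwiner $H_\rho \to H_\eta$. The irreducibility of $\bim{M_0}{\cH}{P}$ combined with Schur's lemma then forces $\rho \cong \eta$. The containment $\cL \subset \cK$ follows by replacing $\cL$ with $\cL + \cK$ and invoking the maximality of $\cK$ among $M_0$-$\psi_\eta(M_0)$-subbimodules of $H_\eta^* \otimes \cH$ with finite right dimension. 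The main obstacle is precisely the finiteness of $\dim(\cK_{\psi_\eta(M_0)})$: since $\psi_\eta(M_0) \subset \B(H_\eta) \otimes M$ genuinely involves elements of $M \setminus M_0$, this bound cannot be read off directly from $\dim(\cH_P) < \infty$ but has to be unpacked through the quasi-regular decomposition of $M_0$ over $N$ (Assumption~\ref{2A}) and the freeness in Assumption~\ref{4}, which prevents distinct spectral pieces from collapsing into each other.
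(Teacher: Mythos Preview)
Your approach has a genuine gap at the heart of the existence claim. Write $\bim{M_0}{\cH}{P} \cong \bim{\psi(M_0)}{p(\C^n \ot \rL^2(P))}{P}$. Then $\tilde{\cH} \cong p(\C^n \ot \rL^2(M))$ and its $\eta$-component is $p(\C^n \ot L(\eta))$. The set $\cH \cdot L^0(\eta)$ equals $p(\C^n \ot P\,L^0(\eta))$, whose closure is already \emph{all} of $p(\C^n \ot L(\eta))$, since $P\,L^0(\eta)$ is dense in $L(\eta)$. So the $\cK$ you describe is the whole of $H_\eta^* \ot \cH$. But $\psi_\eta(M_0) \subset \psi_\eta(P)$ has infinite index (because $M_0 \subset P$ does), so $\dim\bigl((H_\eta^* \ot \cH)_{\psi_\eta(M_0)}\bigr) = \infty$. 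Your appeal to Lemma~\ref{lem.finite-dim} does not help: that lemma bounds the dimension of a space of intertwiners, not the coupling constant of a module, and ``enlarging the right action from $\psi_\eta(P)$ to $\psi_\eta(M_0)$'' goes in the wrong direction---restricting to a smaller algebra increases, not preserves, right dimension.

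The deeper issue is that you never invoke Assumption~\ref{3} (property~(T) of $N_0$) or Theorem~\ref{thm.IPP}, and these are exactly what the paper uses to produce $\cK$. The paper's argument is: property~(T) of $\psi(N_0)$ together with hyperfiniteness of $M_1$ forces, via Theorem~\ref{thm.IPP}.2, a partial isometry $u$ with $u^* \psi(N_0) u \subset M_0^k$; quasi-regularity of $N_0 \subset M_0$ and Theorem~\ref{thm.IPP}.1 then upgrade this to $u^* \psi(M_0) u \subset M_0^k$. One then applies the spectral projection $E_\eta$ to $u$ to obtain a non-zero $v$ intertwining $\psi(M_0)$ and $(\id \ot \psi_\eta)\gamma(M_0)$ with $\gamma$ valued in matrices over $M_0$, and sets $\cK$ to be the closure of $v(\C^k \ot \psi_\eta(M_0))$. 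Finite right dimension is then immediate from the finitely many columns of $v$. The uniqueness part also relies on Theorem~\ref{thm.IPP}.1 (applied to the intertwiner $\zeta$ conjugated back into $M_0$), not on a Schur-type argument about spectral components. Freeness (Assumption~\ref{4}) and quasi-regularity of $N \subset M_0$ alone cannot substitute for this rigidity input; they control the fusion combinatorics of $N$-$N$-bimodules but give no mechanism to locate a finite-right-dimension $M_0$-subbimodule inside an infinite-index situation.
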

\begin{proof}
Take $\psi : M_0 \recht p P^n p$ such that $\bim{{M_0}}{\cH}{P} \cong \bim{{\psi(M_0)}}{p(\C^n \ot \eL^2(P))}{P}$. By assumption, the inclusion $\psi(M_0) \subset p P^n p$ is irreducible. From assumption \ref{3}, we get the property (T) II$_1$ factor $N_0$ and hence, $\psi(N_0)$ has property (T) and is a subalgebra of $p P^n p \subset p M^n p$. Recall that $M = M_0 *_N M_1$ and that $M_1$ is hyperfinite. Since there is no non-zero homomorphism from a property (T) II$_1$ factor to the hyperfinite II$_1$ factor, \ref{thm.IPP}.2 provides $u \in p(\C^n (\C^k)^* \ot M)$ with $uu^* = p$, $q:= u^*u \in M_0^k$ and $u^* \psi(N_0) u \subset q M_0^k q$. Since $N_0 \subset M_0$ is quasi-regular, \ref{thm.IPP}.1 implies that $u^* \psi(M_0) u \subset q M_0^k q$.

Define $\gamma : M_0 \recht q M_0^k q : \gamma(a) = u^* \psi(a) u$. We now use the bimodule maps $E_\pi$ given by \eqref{eq.Epi}. Take $\eta \in \irr(G)$ such that $(\id \ot E_\eta)(u) \neq 0$. So, we get a non-zero $v \in p(\C^n (\C^k \ot H_\eta)^* \ot P)$ satisfying $$\psi(a) v = v (\id \ot \psi_\eta)\gamma(a)$$ for all $a \in M_0$. Replacing $v$ by its polar part, we may assume that $v$ is a partial isometry. The irreducibility of $\psi(M_0) \subset p P^n p$ ensures that $vv^* = p$.

Define the $\psi(M_0)$-$\psi_\eta(M_0)$-subbimodule $\cK$ of $p(\C^n H_\eta^* \ot \eL^2(P))$ as the closure of $v(\C^k \ot \psi_\eta(M_0))$. By construction, $\dim(\cK_{\psi_\eta(M_0)}) < \infty$.

Let $\rho \in \irr(G)$ and let $\cL \subset p(\C^n H_\rho^* \ot \eL^2(P))$ be a non-zero $\psi(M_0)$-$\psi_\rho(M_0)$-subbimodule with $\dim(\lmo{{\psi(M_0)}}{\cL}) < \infty$. We have to prove that $\rho = \eta$ and $\cL \subset \cK$. Since $\dim(\lmo{{\psi(M_0)}}{\cL}) < \infty$, we can take a non-zero vector
$$\xi \in (1 \ot p)((\C^l \ot \C^n)H_\rho^* \ot \eL^2(P))$$
and a, possibly non-unital, $*$-homomorphism $\theta : M_0 \recht M_0^l$ satisfying $\xi \psi_{\rho}(a) = (\id \ot \psi)\theta(a) \xi$ for all $a \in M_0$ and such that $\cL$ is the closed linear span of $((\C^l)^* \ot \psi(M_0))\xi$.

Put $\zeta = (1 \ot 1 \ot V_\eta^*)(1 \ot v^*) \xi V_\rho$. Since $vv^* = p$, we know that $\zeta$ is non-zero. Then, $$\zeta \in (\C^l \ot \C^k \ot H_\eta)H_\rho^* \ot \eL^2(M)$$ and $\zeta$ satisfies $\zeta (1 \ot a) = ((\id \ot \gamma)\theta(a))_{124} \zeta$ for all $a \in M_0$. By Theorem \ref{thm.IPP}.1, it follows that $\zeta \in (\C^l \ot \C^k \ot H_\eta)H_\rho^* \ot \eL^2(M_0)$. In particular, $\zeta$ is $G$-invariant. Since $\zeta = (1 \ot 1 \ot V_\eta^*)(1 \ot v^*) \xi V_\rho$ and $\xi$ is a non-zero $G$-invariant vector, it follows that $\eta = \rho$ and $\zeta = (\zeta_0)_{124}$ for some $\zeta_0 \in \C^l \ot \C^k \ot \eL^2(M_0)$. It finally follows that
$$\xi = (1 \ot v)(\id \ot \id \ot \psi_\eta)(\zeta_0)$$
which belongs to $\C^l \ot \cK$, ending the proof of the lemma.
\end{proof}

\begin{lemm} \label{lem.step-2}
Let $\bim{P}{\cH}{P}$ be a finite index $P$-$P$-bimodule. For every non-zero irreducible $M_0$-$P$-subbimodule $\cH_0 \subset \cH$,
there exists $\eta \in \irr(G)$ and a non-zero $M_0$-$\psi_\eta(M_0)$-subbimodule $\cK \subset H_\eta^* \ot \cH_0$ such that $\bim{{M_0}}{\cK}{\psi_\eta(M_0)}$ has finite index.
\end{lemm}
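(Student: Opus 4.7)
The plan is to deduce Lemma \ref{lem.step-2} from Lemma \ref{lem.step-1} by supplementing its output with finite left dimension. Since $\cH_0 \subset \cH$ and $\cH$ is a finite-index $P$-$P$-bimodule, we have $\dim((\cH_0)_P) \leq \dim(\cH_P) < \infty$, and $\cH_0$ is irreducible as an $M_0$-$P$-bimodule by hypothesis. Lemma \ref{lem.step-1} then yields $\eta \in \irr(G)$ and a non-zero $M_0$-$\psi_\eta(M_0)$-subbimodule $\cK \subset H_\eta^* \ot \cH_0$ with $\dim(\cK_{\psi_\eta(M_0)}) < \infty$, together with the crucial universal property: any non-zero $M_0$-$\psi_\rho(M_0)$-subbimodule $\cL \subset H_\rho^* \ot \cH_0$ with $\dim(\lmo{M_0}{\cL}) < \infty$ is automatically contained in $\cK$ and satisfies $\rho = \eta$.

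It thus suffices to exhibit some non-zero $M_0$-$\psi_\eta(M_0)$-subbimodule $\cK' \subseteq \cK$ with $\dim(\lmo{M_0}{\cK'}) < \infty$, since such $\cK'$ inherits finite right dimension from $\cK$ and hence has finite Jones index, giving the bimodule demanded by Lemma \ref{lem.step-2}. To produce $\cK'$, I would unpack the construction of $\cK$ in the proof of Lemma \ref{lem.step-1}: $\cK$ is the closure of $v \cdot (\C^k \ot \psi_\eta(M_0))$, where $v = (\id \ot E_\eta)(u)$ (polar-decomposed) comes from the partial isometry $u \in p(\C^n (\C^k)^* \ot M)$ produced by \ref{thm.IPP}.2 applied to the property (T) subfactor $\psi(N_0)$ and extended via \ref{thm.IPP}.1 using quasi-regularity of $N_0 \subset M_0$. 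The relevant data are $uu^* = p$, $q := u^*u \in M_0^k$, and $u^* \psi(M_0) u \subset qM_0^k q$. Writing $\gamma(a) := u^* \psi(a) u$, the intertwining $\psi(a) v = v(\id \ot \psi_\eta) \gamma(a)$ shows that, via $v$, the left $M_0$-action on $\cK$ factors through the inclusion $\gamma : M_0 \to qM_0^k q$, so finite left dimension of $\cK'$ reduces to finite Jones index of a suitable subinclusion of $\gamma(M_0) \subset qM_0^k q$.

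I expect the main obstacle to lie in controlling this inclusion: a priori, $\psi(M_0) \subset pP^n p$ need not have finite index (the left $M_0$-dimension of $\cH_0$ may be infinite), so $\gamma(M_0) \subset qM_0^k q$ need not have finite index either. My plan to handle this is to pick a minimal projection $q_0$ of the relative commutant $\gamma(M_0)' \cap qM_0^k q$ and set $\cK' := \cK \cdot (\id \ot \psi_\eta)(q_0)$, which becomes an irreducible $M_0$-$(\id \ot \psi_\eta)(q_0)\psi_\eta(M_0)^k(\id \ot \psi_\eta)(q_0)$-subbimodule of finite right dimension. Finiteness of the left dimension then hinges on $\gamma(M_0)' \cap qM_0^k q$ being finite dimensional, which I expect to follow from the rigidity of the property (T) subfactor $\gamma(N_0) \cong \psi(N_0)$ combined with the finite-dimensional relative commutant principle recalled in Section \ref{subsec.bimodule} and the quasi-regularity of $N_0 \subset M_0$ (assumption \ref{3}). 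The delicate point is the bookkeeping across the spectral projection $E_\eta$ and the passage from the large ambient $P$ down to $M_0$, which will require careful use of the intertwining relations established in the proof of Lemma \ref{lem.step-1}.
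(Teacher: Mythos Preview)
Your opening reduction is sound: Lemma~\ref{lem.step-1} applied to $\cH_0$ produces $\eta$ and $\cK \subset H_\eta^* \ot \cH_0$ with $\dim(\cK_{\psi_\eta(M_0)}) < \infty$, and it would indeed suffice to locate inside $\cK$ a non-zero $M_0$-$\psi_\eta(M_0)$-subbimodule of finite left $M_0$-dimension. The gap is in your construction of $\cK'$. Even if you could show that $\gamma(M_0)' \cap qM_0^k q$ is finite dimensional --- and neither property~(T) nor quasi-regularity gives this; property~(T) enters the proof of Lemma~\ref{lem.step-1} only through Theorem~\ref{thm.IPP}, not via any commutant statement --- a finite-dimensional relative commutant does \emph{not} imply that $\gamma(M_0)q_0 \subset q_0 M_0^k q_0$ has finite index. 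The paper's own irreducible inclusion $N \subset M_0$ has infinite index. So cutting by a minimal $q_0$ gives no control whatsoever on $\dim(\lmo{M_0}{\cK'})$, and your reduction ``finite left dimension of $\cK'$ reduces to finite Jones index of a suitable subinclusion of $\gamma(M_0) \subset qM_0^k q$'' leads nowhere.

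The paper does not try to find the finite-left-dimension piece by dissecting $\cK$ from within; it supplies it externally by a symmetry argument. One applies Lemma~\ref{lem.step-1} not only to $\cH$ but to every twist $\bim{\psi_\pi(P)}{(H_\pi \ot \cH)}{P}$, $\pi \in \irr(G)$, obtaining for each pair $(\pi,\eta)$ in a set $J$ a $\psi_\pi(M_0)$-$\psi_\eta(M_0)$-subbimodule $\cK^{\pi,\eta} \subset H_\pi H_\eta^* \ot \cH$ of finite right dimension, whose universal property absorbs every subbimodule of finite \emph{left} dimension. Running the left-right mirror of Lemma~\ref{lem.step-1} over all $\eta$ produces $\cL^{\pi,\eta}$ of finite left dimension, absorbing every subbimodule of finite \emph{right} dimension. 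The two universal properties then force $\cK^{\pi,\eta} \subset \cL^{\pi,\eta} \subset \cK^{\pi,\eta}$, so each $\cK^{\pi,\eta}$ has finite index; specializing to $\pi = \eps$ gives the lemma. Note that the mirror argument for a fixed $\eta$ may land at some $\pi \neq \eps$, which is why one must run Lemma~\ref{lem.step-1} over the whole family of twists rather than just over $\cH_0$ itself.
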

\begin{proof}
For every $\pi \in \irr(G)$, define the finite index $P$-$P$-bimodule $\cH^\pi$ given by $\bim{{\psi_\pi(P)}}{(H_\pi \ot \cH)}{P}$. Since $M_0 \subset P$ is irreducible, we find for every $\pi \in \irr(G)$, a finite number $n_\pi$ and an orthogonal decomposition $\cH^\pi = \bigoplus_{i=1}^{n_\pi} \cH^{\pi,i}$ of $\cH^\pi$ into irreducible $M_0$-$P$-bimodules. For every $\pi,i$, take $\eta_{\pi,i} \in \irr(G)$ and a $\psi_\pi(M_0)$-$\psi_{\eta_{\pi,i}}(M_0)$-subbimodule $\cK^{\pi,i} \subset H_{\eta_{\pi,i}}^* \ot \cH^{\pi,i}$ satisfying the conclusions of Lemma \ref{lem.step-1}. Note that $\cK^{\pi,i} \subset H_\pi H_{\eta_{\pi,i}}^* \ot \cH$.

Define the subset $J \subset \irr(G) \times \irr(G)$ consisting of $(\pi,\eta)$ for which there exists $1 \leq i \leq n_\pi$ with $\eta_{\pi,i} = \eta$. Moreover, define $\cK^{\pi,\eta} = \lspan \{\cK^{\pi,i} \mid \eta_{\pi,i} = \eta\}$. By construction, $\cK^{\pi,\eta}$ is a non-zero $\psi_\pi(M_0)$-$\psi_\eta(M_0)$-subbimodule of $H_\pi H_\eta^* \ot \cH$, of finite right $\psi_\eta(M_0)$-dimension. Moreover, whenever $\pi,\eta \in \irr(G)$ and $\cK \subset H_\pi H_\eta^* \ot \cH$ is a $\psi_\pi(M_0)$-$\psi_\eta(M_0)$-subbimodule of finite left $\psi_\pi(M_0)$-dimension, it follows that $(\pi,\eta) \in J$ and $\cK \subset \cK^{\pi,\eta}$.

By symmetry, we also find a subset $J' \subset \irr(G) \times \irr(G)$ and for all $(\pi,\eta) \in J'$ a $\psi_\pi(M_0)$-$\psi_\eta(M_0)$-subbimodule $\cL^{\pi,\eta}$ of $H_\pi H_\eta^* \ot \cH$ which is of finite left $\psi_\pi(M_0)$-dimension and which has the following property: if $\pi,\eta \in \irr(G)$ and $\cL \subset H_\pi H_\eta^* \ot \cH$ is a non-zero $\psi_\pi(M_0)$-$\psi_\eta(M_0)$-subbimodule of finite right $\psi_\eta(M_0)$-dimension, we have $(\pi,\eta) \in J'$ and $\cL \subset \cL^{\pi,\eta}$.

But then, $J = J'$ and $\cK^{\pi,\eta} = \cL^{\pi,\eta}$ for all $(\pi,\eta) \in J = J'$. Hence, all $\cK^{\pi,\eta}$ are finite index $\psi_\pi(M_0)$-$\psi_\eta(M_0)$-bimodules. To conclude the proof of the lemma, it suffices to observe that $\bigoplus_{i=1}^{n_\epsilon} \cH^{\epsilon,i}$ is a decomposition of $\cH$ into irreducible $M_0$-$P$-subbimodules and that $\cK^{\epsilon,i} \subset H_{\eta_{\epsilon,i}}^* \ot \cH^{\epsilon,i}$ is the required finite index $M_0$-$\psi_{\eta_{\epsilon,i}}(M_0)$-subbimodule.
\end{proof}

\begin{lemm} \label{lem.step-3}
Let $\bim{P}{\cH}{P}$ be a finite index $P$-$P$-bimodule and $\cK \subset \cH$ a non-zero irreducible $M_0$-$M_0$-subbimodule such that $\bim{{M_0}}{\cK}{M_0}$ has finite index. Then, $\cK$ is the trivial $M_0$-$M_0$-bimodule: $\bim{M_0}{\cK}{M_0} \cong \bim{M_0}{\eL^2(M_0)}{M_0}$.
\end{lemm}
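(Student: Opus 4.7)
The strategy combines Popa-style intertwining (Theorems~\ref{thm.IPP}.1 and~\ref{thm.IPP}.2) with the freeness hypothesis~\ref{4} and the rigidity hypothesis~\ref{2B}. The goal is to realize $\cK$ inside an $M_0$-only bimodule via intertwining, then exhibit a non-zero $N$-$N$-subbimodule of $\cK$ lying in $\cF_0$, and finally invoke~\ref{2B}.

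Write $\bim{P}{\cH}{P} \cong \bim{\psi(P)}{p(\C^n \ot \eL^2(P))}{P}$ for a finite-index inclusion $\psi : P \recht pP^np$. The property-(T) subfactor $\psi(N_0) \subset pM^np$ (from assumption~\ref{3}) cannot embed into the hyperfinite $M_1$, so Theorem~\ref{thm.IPP}.2 yields $u \in p(\C^n(\C^k)^* \ot M)$ with $uu^* = p$, $q := u^*u \in M_0^k$ and $u^*\psi(N_0)u \subset qM_0^kq$. The quasi-regularity of $N_0 \subset M_0$ together with Theorem~\ref{thm.IPP}.1 extends this to $u^*\psi(M_0)u \subset qM_0^kq$; set $\gamma := u^*\psi(\cdot)u$. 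A second application of Theorem~\ref{thm.IPP}.1, now to $Q = \gamma(N_0)$ and the $\gamma(M_0)$-$M_0$-subbimodule $u^*\cK \subset q(\C^k \ot \eL^2(M))$ of finite right $M_0$-dimension, forces $u^*\cK \subset q(\C^k \ot \eL^2(M_0))$. Since $uu^* = p$ acts as the identity on $\cK$, the partial isometry $u$ gives a bimodular bijection $u^*\cK \to \cK$, so $\cK$ is realized as an irreducible $M_0$-$M_0$-subbimodule of the finite-index $M_0$-$M_0$-bimodule $q(\C^k \ot \eL^2(M_0))$.

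Now pick any non-zero irreducible $N$-$N$-subbimodule $\cL \subset \cK$. Because $\cK$ has finite $M_0$-$M_0$-index, $\cL$ is a finite-index $N$-$N$-bimodule, and by the very definition of $\cF$ in assumption~\ref{4} we have $\cL \in \cF$. On the other hand $\cL \subset \cH \subset p(\C^n \ot \eL^2(M))$ via the embedding $\eL^2(P) \hookrightarrow \eL^2(M)$. Using Lemma~\ref{lem.product} to write $\cL$ as the closure of the span of its bounded matrix representatives $v \in \cL \cap p(\C^n \ot M)$, and expanding each $v$ along the alternating decomposition of $M$ from Remark~\ref{rem.decomposition} (Lemma~\ref{lem.finite-dim} controlling the contributing summands), one verifies that $\cL$ represents an element of the free-product fusion subalgebra $\cF_0 * \langle\Rep(G)\rangle \subset \FAlg(N)$, whose existence is provided by the freeness in~\ref{4}. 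By that same freeness, every irreducible in $\cF * \langle\Rep(G)\rangle$ has a unique alternating decomposition into $\cF$- and $\langle\Rep(G)\rangle$-letters; irreducibles in $\cF$ contain no $\langle\Rep(G)\rangle$-letter, while those in $\cF_0 * \langle\Rep(G)\rangle$ use only $\cF_0$- and $\langle\Rep(G)\rangle$-letters, so $\cF \cap (\cF_0 * \langle\Rep(G)\rangle) = \cF_0$. Hence $\cL \in \cF_0$, and since $\cK$ is an irreducible finite-index $M_0$-$M_0$-bimodule containing the non-zero $\cF_0$-element $\cL$ as an $N$-$N$-subbimodule, assumption~\ref{2B} yields $\bim{M_0}{\cK}{M_0} \cong \bim{M_0}{\eL^2(M_0)}{M_0}$.

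The main obstacle is the claim that $\cL \in \cF_0 * \langle\Rep(G)\rangle$. The subtlety is that the left $N$-action on $p(\C^n \ot \eL^2(M))$ factors through $\psi|_N : N \recht pP^np$, which a~priori does not land in $pN^np$, so the $N$-$N$-decomposition of the ambient is not obtained by simply tensoring $\C^n$ with the alternating $R^0_{i_0}L^0(\pi_1)\cdots L^0(\pi_n)R^0_{i_n}$-decomposition of $M$. Pinning down the $N$-$N$-isomorphism type of $\cL$ requires carefully matching the $R(\cdots)$-components on both sides of the intertwining equation $\psi(a)v = v\vphi(a)$, where $\cL \cong H(\vphi)$ for some finite-index inclusion $\vphi : N \recht p_1 N^l p_1$, and exploiting the constraint coming from $\vphi$ taking values in $N^l$ to force the isomorphism type into $\cF_0 * \langle\Rep(G)\rangle$.
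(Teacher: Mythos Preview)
Your first paragraph is correct and in fact streamlines part of the paper's argument: conjugating by $u$ and applying Theorem~\ref{thm.IPP}.1 to $Q=\gamma(N_0)$ (which satisfies $Q\not\prec_{M_0}N$ since $N_0$ has property (T) and $N$ is hyperfinite) places an isomorphic copy of $\cK$ inside $q(\C^k\ot\eL^2(M_0))$. The subsequent assertion ``because $\cK$ has finite $M_0$-$M_0$-index, $\cL$ is a finite-index $N$-$N$-bimodule'' is not correct as stated, since $[M_0:N]=\infty$; this is precisely what the paper's Steps~1--2 establish (first producing one $\cL^0$ with $\dim(\cL^0_N)<\infty$, then spreading via $\QN_{M_0}(N)$ and using irreducibility of $\cK$, and finally intersecting with the symmetric left-handed decomposition). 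Your realization makes the right-$N$-finiteness immediate, so this gap is repairable, but you still need the symmetric argument.

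The genuine gap is the claim $\cL\in\cF_0*\langle\Rep(G)\rangle$, which you flag as the main obstacle and do not prove. Your proposed method --- expanding $v$ along the $R(i_0,\pi_1,\ldots,\pi_n,i_n)$-decomposition of $\eL^2(M)$ and matching components in $\psi(a)v=v\varphi(a)$ --- cannot work as described: that decomposition refers to the \emph{standard} left $N$-action on $\eL^2(M)$, whereas the left $N$-action on $\cL\subset p(\C^n\ot\eL^2(P))$ is through $\psi|_N:N\to pP^np$, an a~priori arbitrary embedding into matrices over $P$, and left multiplication by $\psi(a)\in P^n$ does \emph{not} preserve the $R(\cdots)$-components. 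So while $\varphi(a)\in N^l$ keeps the right side component-by-component, the left side is scrambled, and matching yields no constraint on the $N$-$N$-isomorphism type of $\cL$. The paper's Step~3 avoids this entirely and argues by contradiction: assuming $\cL\in\cF\setminus\cF_0$, it builds, for an infinite family of pairwise inequivalent $K_i\in\cF_0$, subspaces $\cT_i\subset P$ with $\overline{\cT_i}\cong H(\psi_\eta)\ot_N K_i\ot_N H(\psi_\pi)$, and shows using freeness that the $N$-$P$-subbimodules $\overline{\cT_i\cdot\cL\cdot P}\subset\cH$ are non-zero and pairwise orthogonal, contradicting the finite index of $\bim{P}{\cH}{P}$. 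Nothing in your proposal substitutes for this construction.
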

\begin{proof}
We may assume that $\bim{P}{\cH}{P}$ is irreducible.

{\bf Step 1.} $\cK$ contains a non-zero $N$-$N$-subbimodule $\cL$ with $\dim(\cL_N) < \infty$.

To prove step 1, take a finite index inclusion $\psi : P \recht pP^n p$ such that $\bim{P}{\cH}{P} \cong \bim{{\psi(P)}}{p (\C^n \ot \eL^2(P))}{P}$. Let $\cK \subset p(\C^n \ot \eL^2(P))$ be an irreducible non-zero $\psi(M_0)$-$M_0$-subbimodule such that $\bim{\psi(M_0)}{\cK}{M_0}$ has finite index. Take a finite index, irreducible inclusion $\rho : M_0 \recht q M_0^k q$ and a unitary
$$\theta : q(\C^k \ot \eL^2(M_0)) \recht \cK \quad\text{s.t.}\quad \theta(\rho(a) \mu b) = \psi(a) \theta(\mu) b \quad\text{for all}\;\; \mu \in q(\C^k \ot \eL^2(M_0)) \; , \; a,b \in M_0 \; .$$
Define the non-zero vector $\xi \in p(\C^n(\C^k)^* \ot \eL^2(P))q$ by
$$\xi = \sum_{i=1}^k \theta\bigl(q (e_i \ot 1)\bigr)\, (e_i^* \ot 1) \; .$$
It follows that $\psi(a) \xi = \xi \rho(a)$ for all $a \in M_0$. As an element of $p(\M_n(\C) \ot \rL^1(P))p$, the operator $\xi \xi^*$ commutes with $\psi(M_0)$. Since $M_0 \subset P$ is irreducible and $\psi(P) \subset p P^n p$ has finite index, the relative commutant $\psi(M_0)' \cap p P^n p$ is finite dimensional. Hence, $\xi \xi^*$ is bounded and it follows that $\xi \in p(\C^n (\C^k)^* \ot P)q$. The irreducibility of $\rho(M_0) \subset qM_0^k q$ implies that $E_{M_0}(\xi^* \xi)$ is a non-zero multiple of $q$. Denote by $v \in p(\C^n (\C^k)^* \ot P)q$ the polar part of $\xi$. Note that $\psi(a) v = v \rho(a)$ for all $a \in M_0$.

We claim that $\rho(N) \prec_{M_0} N$. Suppose not. Then, Theorem \ref{thm.IPP}.1 implies that the quasi-normalizer of $\rho(N)$ inside $qM^k q$ is contained in $q M_0^k q$. Since $N \subset P$ is quasi-regular, it follows that $v^* \psi(P) v \subset q M_0^k q$. Denote by $A$ the von Neumann subalgebra of $p P^n p$ generated by $\psi(P)$ and $vv^*$. Write $q_1 = v^*v$ and note that $q_1 \in qM_0^k q$. Since $\psi(P) \subset A \subset p P^n p$, it follows that $A \subset p P^n p$ has finite index. But then, $v^* A v$ is a finite index subalgebra of $q_1 P^k q_1$. Since $v^* A v \subset q_1 M_0^k q_1$ and $M_0 \subset P$ has infinite index, we have reached a contradiction. This proves the claim.

The claim and the remark following Definition \ref{def.embed} yield $b_1,\ldots,b_m \in q (\C^k \ot M_0)$ such that writing $V = \lspan\{b_i N \mid i = 1,\ldots,m\}$, we have $V \neq \{0\}$ and $\rho(N) V = V$. Define $\cL \subset \cK$ as the closure of $\lspan\{\xi b_i N \mid i=1,\ldots,m\}$. By construction $\cL$ is a $\psi(N)$-$N$-subbimodule of $\cK$ with $\dim(\cL_N) < \infty$. Since $E_{M_0}(\xi^* \xi)$ is a multiple of $q$, it also follows that $\cL$ is non-zero. So, Step~1 is proven.

{\bf Step 2.} $\cK$ is a direct sum of non-zero $N$-$N$-subbimodules $\cL$ such that $\bim{N}{\cL}{N}$ has finite index.

By Step 1, take a non-zero $N$-$N$-subbimodule $\cL^0$ of $\cK$ with $\dim(\cL^0_N) < \infty$. For all $a,b \in \QN_{M_0}(N)$, the closure of $Na \cdot \cL^0 \cdot bN$ is still an $N$-$N$-subbimodule of $\cK$ of finite right $N$-dimension. By irreducibility of $\bim{{M_0}}{\cK}{M_0}$ and quasi-regularity of $N \subset M_0$, the linear span of all $Na \cdot \cL^0 \cdot bN$ is dense in $\cK$. So, we have written $\cK$ as a direct sum of $N$-$N$-subbimodules of finite right $N$-dimension. By symmetry, we can also write $\cK$ as a direct sum of $N$-$N$-subbimodules of finite left $N$-dimension. Taking all non-zero intersections of $N$-$N$-subbimodules of both kinds, we end the proof of Step~2.

{\bf Step 3.} Define as above the fusion subalgebra $\cF_0$ of $\FAlg(N)$ generated by the finite index $N$-$N$-subbimodules of $\eL^2(M_0)$. We now prove that every irreducible $N$-$N$-subbimodule of $\cK$ belongs to $\cF_0$.

Choose an infinite set $K_i$, $i \in I$, of irreducible, non-trivial, inequivalent $N$-$N$-bimodules that appear as $N$-$N$-subbimodules of $\rL^2(M_0)$. Since $M_0$ is not hyperfinite, the inclusion $N \subset M_0$ has infinite index and hence, Lemma \ref{lem.finite-dim} implies that such an infinite set $I$ can be chosen. Denote $K_i^0 := K_i \cap M_0$. By Lemma \ref{lem.product}, $K_i^0$ is dense in $K_i$ and $K_i^0$ is finitely generated, both as a left and as a right $N$-module.

Assume by contradiction that $\cL \subset \cK$ is an irreducible $N$-$N$-subbimodule such that $\bim{N}{\cL}{N}$ has finite index and $\cL \not\in \cF_0$. Define $\cF$ as in assumption \ref{4} and note that by construction $\cL \in \cF$.
Take some $\pi \in \irr(G)$, $\pi \neq \epsilon$. Take $\eta \in \irr(G)$ unitarily equivalent with the contragredient representation of $\pi$. Let $\xi_0 \in H_\eta \ot H_\pi$ be a non-zero $(\eta \ot \pi)$-invariant vector. Define for every $i \in I$, the subspace $\cT_i \subset P$ given by
$$\cT_i := \lspan \bigl((H_\eta \ot H_\pi)^* \ot N\bigr) (V_\eta)_{13} (1 \ot 1 \ot K_i^0) (V_\pi)_{23}(\xi_0 \ot 1) \; .$$
It follows from Remark \ref{rem.decomposition} that the closure of $\cT_i$ in $\eL^2(P)$ is an $N$-$N$-bimodule isomorphic with $H(\psi_\eta) \ot_N K_i \ot_N H(\psi_\pi)$. Note for later use that the ultraweak closure of $P \cT_i$ is of the form $P q$ for some non-zero projection $q \in P$ that commutes with $N$ and hence equals $1$. So, $P \cT_i$ is ultraweakly dense in $P$.

We claim that the subspaces of $\cH$ defined by $\cT_i \cdot \cL \cdot P$, $i \in I$, are non-zero and mutually orthogonal in $\cH$. Once this claim is proven, we have found inside $\cH$ infinitely many orthogonal, non-zero $N$-$P$-subbimodules. This is a contradiction with $\bim{P}{\cH}{P}$ being of finite index and $N \subset P$ being irreducible. So, to conclude step 3, it remains to prove the claim.

Fix $i \in I$. Since $P \cT_i$ is ultraweakly dense in $P$, it follows that $\cT_i \cdot \cL \cdot P$ is non-zero. As at the end of Remark \ref{rem.decomposition}, decompose $\rL^2(P)$ as a direct sum of irreducible, finite index $N$-$N$-subbimodules $H_k$ such that, writing $H^0_k := H_k \cap P$, the linear span of all $H^0_k$ is an ultraweakly dense $*$-subalgebra of $P$. Take $i \neq j$ and take $k,l$. We have to prove that $\cT_i \cdot \cL \cdot H^0_k$ is orthogonal to $\cT_j \cdot \cL \cdot H^0_l$. It suffices to prove that their closures are disjoint $N$-$N$-bimodules.

By Lemma \ref{lem.product} and our description above of the closure of $\cT_i$, the closure of $\cT_i \cdot \cL \cdot H^0_k$ is isomorphic with an $N$-$N$-subbimodule of
$$H(\psi_\eta) \ot_N K_i \ot_N H(\psi_\pi) \ot_N \cL \ot_N H_k \; .$$
Since $\cL \in \cF \setminus \cF_0$, Frobenius reciprocity combined with Remark \ref{rem.decomposition}, implies that $\cL \ot_N H_k$ is isomorphic with a direct sum of $N$-$N$-bimodules of the form
$$\cL' \ot_N H(\psi_{\pi_1}) \ot_N R_{1} \ot_N \cdots \ot_N R_{{n-1}} \ot_N H(\psi_{\pi_n}) \ot_N R_{n} \; ,$$
where $\cL' \in \cF \setminus \cF_0$, $n \in \N \cup \{0\}$, $\pi_1,\ldots,\pi_n \in \irr(G) \setminus \{\eps\}$, $R_1,\ldots,R_n$ are irreducible $N$-$N$-subbimodules of $\rL^2(M_0)$ and $R_1,\ldots,R_{n-1}$ are non-trivial. Hence, the closure of $\cT_i \cdot \cL \cdot H^0_k$ is isomorphic with a direct sum of irreducible $N$-$N$-subbimodules that are of the form
$$H(\psi_\eta) \ot_N K_i \ot_N H(\psi_\pi) \ot_N \cL' \ot_N H(\psi_{\pi_1}) \ot_N \cdots $$
with $\cL' \in \cF \setminus \cF_0$.
The freeness of $\cF$ and $\Rep(G)$ inside $\FAlg(N)$ implies that two $N$-$N$-bimodules of this last form, for different values of $i$, can never be isomorphic. This proves the claim.

{\bf End of the proof.} By Step 3, $\cK$ contains a non-zero $N$-$N$-subbimodule that belongs to $\cF_0$. Hence, assumption \ref{2B} says that $\bim{M_0}{\cK}{M_0}$ is the trivial $M_0$-$M_0$-bimodule.
\end{proof}

\begin{lemm} \label{lem.step-4}
The von Neumann algebra $P$ is generated by $\{ (\xi^* \ot 1) \psi_\pi(M_0) (\eta \ot 1) \mid \pi \in \irr(G),\ \xi, \eta \in H_\pi \} .$
\end{lemm}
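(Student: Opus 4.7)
Write $A$ for the von Neumann subalgebra of $P$ generated by the stated set. Since $M_0 \subset P$ (because $G$ acts trivially on $M_0$) and since $\psi_\pi$ maps $P$ into $\B(H_\pi) \ot P$, each matrix coefficient $(\xi^* \ot 1)\psi_\pi(a)(\eta \ot 1)$ with $a \in M_0$ indeed belongs to $P$; hence $A \subset P$. For the converse, I will show that $A$ is $\|\cdot\|_2$-dense in $P$, using the orthogonal spectral decomposition $\rL^2(P) = \bigoplus R(i_0,\pi_1,\ldots,\pi_n,i_n)^G$ from Remark~\ref{rem.decomposition}, by exhibiting, for each summand, enough elements of $A$ to span it densely. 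Taking $\pi = \eps$ gives $V_\eps = 1$ and $\psi_\eps(a) = a$, so scalar contractions produce $M_0 \subset A$; this covers the $n=0$ summands $R(i_0)^G = R_{i_0}^0$.

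For general $\pi \in \irr(G)$, expanding $V_\pi = \sum_{kl} e_{kl} \ot v_{kl}$ with $v_{kl} \in M_1$, a direct calculation gives the key identity
$$(\xi^* \ot 1)\psi_\pi(a)(\eta \ot 1) \;=\; \sum_l \ell_l(\xi)\, a\, w_l(\eta),$$
where $\ell_l(\xi) := (\xi^* \ot 1)V_\pi(e_l \ot 1) \in L^0(\pi)$ and $w_l(\eta) := (e_l^* \ot 1)V_\pi^*(\eta \ot 1) \in L^0(\bar\pi)$ are matrix coefficients of $V_\pi$ and $V_\pi^*$. So for $a \in R_i^0$ this $G$-invariant element lies in the spectral component $R(0,\pi,i,\bar\pi,0)^G$. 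Minimality of $\sigma$ forces $\dim\Mor(\pi,M_1) = \dim\pi$, so as $\xi,\eta$ range over $H_\pi$, the vectors $\ell_l(\xi)$ and $w_l(\eta)$ exhaust all matrix coefficients of $V_\pi, V_\pi^*$; hence, together with the elements of $R_i^0 \subset A$ used as $a$, the span of such elements is $\rL^2$-dense in that summand.

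To reach summands of arbitrary length $n$, I form products of $n$ generators interleaved with elements of $M_0 \subset A$, obtaining $G$-invariant elements in
$L^0(\pi_1)\, M_0\, L^0(\bar\pi_1)\, L^0(\pi_2)\, M_0\, \cdots\, L^0(\bar\pi_n).$
Applying Lemma~\ref{lemm.pi ot eta} iteratively to each adjacent pair $L^0(\bar\pi_j)\, L^0(\pi_{j+1})$, these spaces decompose as $\bigoplus_\mu L^0(\mu)$ (with $\mu \subset \bar\pi_j \ot \pi_{j+1}$) together with coefficients in $M^G = P$, and the latter are already in $A$ by the preceding steps. Since every $\mu \in \irr(G)$ occurs as a subrepresentation of some tensor product, one can choose $\pi_1,\ldots,\pi_n$ and vectors so that any prescribed chain $(\mu_1,\ldots,\mu_{k})$ of irreducibles and any elements $a_j \in R_{i_j}^0$ appear, thereby realizing an arbitrary spectral component $R(i_0,\mu_1,\ldots,\mu_k,i_k)^G$ in the $\rL^2$-closure of $A$.

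The main obstacle is the density step in the last paragraph: one must verify rigorously that products of matrix coefficients densely span the $G$-invariant part of each spectral component, once the effect of Lemma~\ref{lemm.pi ot eta} on the interleaved $M_0$-pieces is tracked. This is a surjectivity/density check for the intertwining data, whose key inputs are minimality of $\sigma$ (giving access to all matrix coefficients of the $V_\pi$), Frobenius-type decompositions of tensor products via Lemma~\ref{lemm.pi ot eta}, and the availability of arbitrary $a_j \in R_{i_j}^0 \subset A$.
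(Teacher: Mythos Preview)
Your proposal uses the same two ingredients as the paper---the spectral decomposition of $\rL^2(P)$ from Remark~\ref{rem.decomposition} and the fusion identity of Lemma~\ref{lemm.pi ot eta}---but runs the argument in the opposite direction. You try to build up every spectral block $R(i_0,\mu_1,\ldots,\mu_k,i_k)^G$ from products of generators; the paper instead takes an arbitrary element of $P$ and shows by induction that it already lies in $P_0$. Your own flagged ``main obstacle'' is exactly where the two approaches diverge: the bottom-up route forces you to verify, for every admissible word $(\mu_1,\ldots,\mu_k)$ and every $G$-invariant vector in $H_{\mu_1}\ot\cdots\ot H_{\mu_k}$, that some product of generators hits it. This is a genuine combinatorial burden (when the multiplicity $\dim(H_{\mu_1}\ot\cdots\ot H_{\mu_k})^G$ exceeds $1$ you cannot simply invoke irreducibility), and your sketch does not carry it out.

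The paper's top-down induction sidesteps this entirely. One first observes that $P$ is densely spanned by the contractions $(\xi^*\ot 1)A_n$ of
\[
A_n \;=\; (a_0)_{n+1}\,(V_{\pi_1})_{1,n+1}\,(a_1)_{n+1}\,(V_{\pi_2})_{2,n+1}\cdots (V_{\pi_n})_{n,n+1}\,(a_n)_{n+1}\,(\eta\ot 1),
\qquad \eta\in\Bigl(\bigotimes_j H_{\pi_j}\Bigr)^G,\ a_j\in M_0,
\]
and then proves $A_n\in H_{\pi_1}\ot\cdots\ot H_{\pi_n}\ot P_0$ by induction on $n$. The key algebraic move---which is precisely the ``reverse'' of your expansion $(\xi^*\ot 1)\psi_\pi(a)(\eta\ot 1)=\sum_l \ell_l(\xi)\,a\,w_l(\eta)$---is to write $(V_{\pi_1})_{1,n+1}(a_1)_{n+1}=(\psi_{\pi_1}(a_1))_{1,n+1}(V_{\pi_1})_{1,n+1}$. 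This absorbs $a_1$ as a matrix with entries in $P_0$ and leaves $(V_{\pi_1})_{1,n+1}(V_{\pi_2})_{2,n+1}$ adjacent, ready for Lemma~\ref{lemm.pi ot eta}; the $X_i$ that appear have entries in $N\subset M_0\subset P_0$, and what remains is an $A_{n-1}$. No surjectivity or density check is needed beyond the initial description of a spanning set for $P$. If you reorganize your argument along these lines, the obstacle you identified disappears.
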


\begin{proof}
Denote by $P_0$ the von Neumann subalgebra of $P$ generated by $\{ (\xi^* \ot 1) \psi_\pi(M_0) (\eta \ot 1) \mid \pi \in \irr(G),\ \xi, \eta \in H_\pi \}$. Taking $\pi = \epsilon$, note that $M_0 \subset P_0$. We have to prove that $P_0 = P$.

By construction (cf.\ Remark \ref{rem.decomposition}), the von Neumann algebra $P$ is densely spanned by
$$
\Bigl \{ (\xi ^*\ot 1)(a_0)_{n+1} (V_{\pi_1})_{1,n+1} \cdots (V_{\pi_n})_{n,n+1}(a_n)_{n+1}(\eta \ot 1)
\; \Big| \; \xi \in \bigotimes_{i = 1}^n H_{\pi_i},\ \eta \in \Bigl( \bigotimes_{i=1}^n H_{\pi_i} \Bigr)^G  \Bigr \}
$$
where $a_0,\ldots,a_n \in M_0$, $\pi_1,\ldots,\pi_n \in \irr(G)$ and where the superscript $^G$ denotes the subspace of $G$-invariant vectors under the tensor product representation.

It therefore suffices to prove by induction on $n$ that for all $\pi_1, \ldots, \pi_n \in \irr(G)$, $\eta \in \Bigl(\bigotimes_{i=1}^n H_{\pi_i} \Bigr)^G$ and $a_0,\ldots,a_n \in M_0$, we have
\begin{equation}\label{eq.W_n}
A_n:=(a_0)_{n+1} (V_{\pi_1})_{1,n+1} \cdots (V_{\pi_n})_{n,n+1} (a_n)_{n+1} (\eta \ot 1) \in H_{\pi_1} \ot \cdots \ot H_{\pi_n} \ot P_0\; .
\end{equation}

The case $n = 1$ being trivial, assume that \eqref{eq.W_n} holds for all $n \leq k-1$. Take $A_k$ as in \eqref{eq.W_n} and re-write $A_k$ in the following way.
$$A_k = (a_0)_{k+1} (\psi_{\pi_1}(a_1))_{1,k+1} (V_{\pi_1})_{1,k+1}(V_{\pi_2})_{2,k+1} \cdots (a_k)_{k+1}(\eta \ot 1)\; . $$
Lemma \ref{lemm.pi ot eta} yields $\mu_1,\ldots,\mu_r \in \irr(G)$, isometries $v_i \in \morph (\mu_i , \pi_1 \ot \pi_2)$ and $X_i \in \B(H_{\mu_i}, H_{\pi_1} \ot H_{\pi_2}) \ot N$ such that
$$(V_{\pi_1})_{13} (V_{\pi_2})_{23} = \sum_{i = 1}^r X_i V_{\mu_i} (v_i^* \ot 1) \; .$$
Put $\xi_i:= (v_i^*)_{12}\, \eta \in (H_{\mu_i} \ot H_{\pi_3} \ot \cdots \ot H_{\pi_n})^G$ and
$$B_i : = (V_{\mu_i})_{1,n} (a_2)_{n}(V_{\pi_3})_{2,n}(V_{\pi_n})_{n-1,n} (a_n)_n ( \xi_i \ot 1 ) \; .$$
By the induction hypothesis, $B_i \in H_{\mu_i} \ot \bigotimes_{i=3}^n H_{\pi_i} \ot P_0$, for all $i$.
Also, $a_0 \in P_0$ and $\psi_{\pi_1}(a_1) \in \B(H_{\pi_1}) \ot P_0$. Since $X_i \in \B(H_{\mu_i}, H_{\pi_1} \ot H_{\pi_2}) \ot P_0$, it follows that
$$A_k = (a_0)_{k+1} (\psi_{\pi_1}(a_1))_{1,k+1} \sum_{i =1}^r  (X_i)_{1,2,k+1}B_i \in  H_{\pi_1} \ot \cdots \ot H_{\pi_k} \ot P_0\; . $$
So, the lemma is proven.
\end{proof}

We can finally prove Theorem \ref{thm.thm}.

\begin{proof}
Since the functor
$$\Rep(G) \droite \Bimod(P) : \pi \mapsto \bim{{P}}{\Mor(H_\pi,\eL^2(M))}{P}$$
is a fully faithful tensor functor, it suffices to prove that for every irreducible finite index $P$-$P$-bimodule $\bim{P}{\cH}{P}$, there exists an $\eta \in \irr(G)$ such that $\bim{P}{\cH}{P} \cong \bim{\psi_\eta(P)}{(H_\eta \ot \eL^2(P))}{P}$. So, let $\bim{P}{\cH}{P}$ be an irreducible finite index $P$-$P$-bimodule.

Decompose $\cH$ into a direct sum $\cH = \bigoplus_{i=1}^k \cH^i$ of irreducible $M_0$-$P$-subbimodules. By Lemma \ref{lem.step-2}, we can take $\eta_1,\ldots,\eta_k \in \irr(G)$ and non-zero irreducible $M_0$-$\psi_{\eta_i}(M_0)$-subbimodules $\cK^i \subset H_{\eta_i}^* \otimes \cH^i$ such that $\bim{M_0}{\cK^i}{\psi_{\eta_i}(M_0)}$ has finite index. Viewing $\cK^i$ as an $M_0$-$\psi_{\eta_i}(M_0)$-subbimodule of the finite index bimodule $\bim{P}{(H_{\eta_i}^* \ot \cH)}{\psi_{\eta_i}(P)}$, Lemma \ref{lem.step-3} says that $\bim{M_0}{\cK^i}{\psi_{\eta_i}(M_0)} \cong \bim{M_0}{\eL^2(M_0)}{M_0}$.

Take a finite index inclusion $\psi : P \recht p P^n p$ such that $\bim{P}{\cH}{P} \cong \bim{\psi(P)}{p(\C^n \ot \eL^2(P))}{P}$. Denote $A = \psi(M_0)' \cap p P^n p$. Then, $A$ is finite dimensional and $\cH^i$ corresponds to $p_i(\C^n \ot \eL^2(P))$, where $p_1,\ldots,p_k$ are minimal projections in $A$ summing to $1$. In the previous paragraph, it was shown that $\bim{\psi(M_0)p_i}{p_i(\C^n H_{\eta_i}^* \ot \eL^2(P))}{\psi_{\eta_i}(M_0)}$ contains the trivial $M_0$-$\psi_{\eta_i}(M_0)$-bimodule. So, we can take non-zero vectors $v_i \in
p_i(\C^n H_{\eta_i}^* \ot \eL^2(P))$ satisfying $\psi(a) v_i = v_i \psi_{\eta_i}(a)$ for all $a \in M_0$. As an element of $p_i(\M_n(\C) \ot \rL^1(P))p_i$, the operator $v_i v_i^*$ commutes with $\psi(M_0)$. By minimality of $p_i$, it follows that $v_iv_i^*$ is a multiple of $p_i$ and in particular, $v_i \in p_i(\C^n H_{\eta_i}^* \ot P)$. So, we may assume that $v_iv_i^* = p_i$. On the other hand, $v_i^* v_i \in \B(H_{\eta_i}) \ot P \cap \psi_{\eta_i}(M_0)' = \C 1$, implying that $v_i^* v_i = 1$.

Define $\eta = \bigoplus_{i=1}^k \eta_i$ and put $\psi_\eta : P \recht \B(H_\eta) \ot P : \psi_\eta(a) = \bigoplus_{i=1}^k \psi_{\eta_i}(a)$. Note that $\psi_\eta(a) = V_\eta (1 \ot a) V_\eta^*$, where $V_\eta = \bigoplus_{i=1}^k V_{\eta_i}$. We have shown the existence of $u \in p (\C^n H_\eta^* \ot P)$ satisfying $uu^* = p$, $u^* u = 1$ and $u^* \psi(a) u = \psi_\eta(a)$ for all $a \in M_0$. We may assume from now on that $\bim{P}{\cH}{P} \cong \bim{\psi(P)}{(H_\eta \ot \eL^2(P))}{P}$ where $\psi : P \recht \B(H_\eta) \ot P$ is a finite index inclusion satisfying $\psi(a) = \psi_\eta(a)$ for all $a \in M_0$. It remains to prove that $\psi(a) = \psi_\eta(a)$ for all $a \in P$.

By Lemma \ref{lem.step-4}, it suffices to prove that $(\id \ot \psi)\psi_\pi(a) = (\id \ot \psi_\eta)\psi_\pi(a)$ for all $a \in M_0$ and all $\pi \in \irr(G)$. Fix $\pi \in \irr(G)$. Applying the reasoning in the previous paragraphs to the $P$-$P$-bimodule $\bim{\psi_\pi(P)}{(H_\pi \ot \cH)}{P}$, we find a finite dimensional unitary representation $\gamma$ of $G$ and a unitary $X \in \B(H_\pi \ot H_\eta,H_\gamma) \ot P$ satisfying
\begin{equation}\label{eq.tussenstap}
(\id \ot \psi)\psi_\pi(a) = X \psi_\gamma(a) X^*
\end{equation}
for all $a \in M_0$. Because $\psi_\pi(N) \subset \B(H_\pi) \ot N$ and $\psi(a) = \psi_\eta(a)$ for all $a \in N$, it follows in particular that $(\id \ot \psi_\eta)\psi_\pi(a) = X \psi_\gamma(a) X^*$ for all $a \in N$. Hence, the unitary
$$Z := (V_\eta)^*_{23} (V_{\pi})^*_{13} X V_\gamma$$
satisfies $Z (1 \ot a) = (1 \ot 1 \ot a) Z$ for all $a \in N$. So, $Z = U \ot 1$, where $U : H_\gamma \recht H_\pi \ot H_\eta$ intertwines the representations $\gamma$ and $\pi \ot \eta$. It follows that $X \psi_\gamma(a) X^* = (\id \ot \psi_\eta)\psi_\pi(a)$ for all $a \in M_0$. Combining with \eqref{eq.tussenstap}, we conclude that $(\id \ot \psi)\psi_\pi(a) = (\id \ot \psi_\eta)\psi_\pi(a)$ for all $a \in M_0$.
\end{proof}

As a consequence of Theorem \ref{thm.thm}, we now prove Theorem \ref{thm.main} stated in the introduction.

\begin{proof}[Proof of Theorem \ref{thm.main}]
Denote by $M_1$ the hyperfinite II$_1$ factor and take a minimal action $G \actson M_1$. Put $N := M_1^G$.

Define the group $\Gamma$, its subgroup $\Lambda < \Gamma$ and the scalar $2$-cocycle $\Omega \in \rZ^2(\Gamma,S^1)$ as in Example \ref{exam.our-example}. Write $R := \rL_\Omega(\Lambda)$ and $M_0 := \rL_\Omega(\Gamma)$. Denote by $\cF$ the fusion subalgebra of $\FAlg(R)$ generated by all finite index $R$-$R$-bimodules appearing as an $R$-$R$-subbimodule of a finite index $M_0$-$M_0$-bimodule. By Remark \ref{rem.countable}, $\cF$ is countable.

Note that both $N$ and $R$ are isomorphic with the hyperfinite II$_1$ factor. Whenever $\al : N \recht R$ is an isomorphism, we can view $\al^{-1} \cF \al$ as a fusion subalgebra of $\FAlg(N)$. By Theorem \ref{Gdelta}, we can choose $\al$ such that $\al^{-1} \cF \al$ is free w.r.t.\ the image of $\Rep(G)$ inside $\FAlg(N)$. Identifying $N$ and $R$ through $\al$, it follows from Example \ref{exam.our-example} that all assumptions for Theorem \ref{thm.thm} are satisfied.

So, we can take $M := M_0 *_N M_1$, extend $G \actson M_1$ to a minimal action $G \actson M$ by acting trivially on $M_0$ and conclude from Theorem \ref{thm.thm} that the natural tensor functor $\Rep(G) \recht \Bimod(M^G)$ is an equivalence of categories.
\end{proof}

\section{Proof of Theorem \ref{thm.subfactors}}

Fix a second countable compact group $G$ and a minimal action $G \overset{\sigma}{\actson} M$ on a II$_1$ factor $M$. Let $G \overset{\alpha}{\actson} A$ be an action of $G$ on the finite dimensional von Neumann algebra $A$. Denote by $\al \ot \si$ the diagonal action of $G$ on $A \ot M$, given by $(\al \ot \si)_g = \al_g \ot \si_g$ for all $g \in G$. Denote by $N^\beta$ the fixed point algebra of an action $\beta$ on a von Neumann algebra $N$.

\begin{lemm}\label{lemma.intermediate}
The fixed point algebra $(A \ot M)^{\alpha \ot \sigma}$ is a factor iff $\cZ(A)^\al = \C 1$. The inclusion $1 \ot M^\si \subset (A \ot M)^{\al \ot \sigma}$ is irreducible iff $A^\al = \C 1$.

Every intermediate von Neumann algebra $1 \ot M^\si \subset N \subset (A \ot M)^{\alpha \ot \sigma}$ is of the form $(D \ot M)^{\al \ot \si}$ for a uniquely determined globally $G$-invariant $*$-subalgebra $D \subset A$.
\end{lemm}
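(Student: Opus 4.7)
The plan is to use the isotypic decomposition of $L^2(M)$ afforded by the minimality of $\si$. Writing $P := M^\si$, the key common input is the relative-commutant identity $(1 \ot P)' \cap (A \ot M) = A \ot 1$, which is immediate from $(M^\si)' \cap M = \C 1$.

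Parts~2 and~1 then drop out quickly. For Part~2, intersecting with the fixed algebra gives $(1 \ot P)' \cap (A \ot M)^{\al \ot \si} = A^\al \ot 1$, so irreducibility is equivalent to $A^\al = \C 1$. For Part~1, the same bound puts $\cZ((A \ot M)^{\al \ot \si}) \subset A^\al \ot 1$, and $\cZ(A)^\al \ot 1$ is clearly central. Conversely, if $a \in A^\al$ makes $a \ot 1$ central then commuting with each averaged element $\int_G \al_g(b) \ot \si_g(m)\,dg$ yields
$$\int_G [a, \al_g(b)] \ot \si_g(m) \, dg = 0 \quad \text{for all } b \in A, \; m \in M.$$
Ranging $m$ over the matrix coefficients of the unitaries $V_\pi$ and using their Peter-Weyl orthogonality in $L^2(M)$ forces all Fourier coefficients of $g \mapsto [a, \al_g(b)]$ to vanish, whence $[a, b] = 0$ for all $b \in A$, i.e.\ $a \in \cZ(A)^\al$. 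So $\cZ((A \ot M)^{\al \ot \si}) = \cZ(A)^\al \ot 1$ and Part~1 follows.

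For Part~3 I use the full isotypic decomposition. Decompose $A = \bigoplus_\pi A_{\bar\pi}$ into $\al$-isotypes and write canonically $A_{\bar\pi} \cong B_\pi \ot H_{\bar\pi}$, with $B_\pi$ the multiplicity space (carrying the trivial $G$-action). Via $\theta_\pi$, one identifies $L(\pi) \cong H_\pi \ot H(\psi_\pi)$ as a $G$-$P$-$P$-object, with $G$ acting only on $H_\pi$ and $P$-$P$ only on $H(\psi_\pi)$. Since $(H_{\bar\pi} \ot H_\pi)^G$ is one-dimensional by Schur, one obtains the $P$-$P$-bimodule identification
$$(A \ot M)^{\al \ot \si} = \bigoplus_{\pi \in \irr(G)} (A_{\bar\pi} \ot L(\pi))^{\al \ot \si} \;\cong\; \bigoplus_\pi B_\pi \ot H(\psi_\pi).$$

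Since $\psi_\pi$ is an irreducible inclusion, $H(\psi_\pi)$ is an irreducible $P$-$P$-bimodule, so every $P$-$P$-subbimodule of $B_\pi \ot H(\psi_\pi)$ has the form $W_\pi \ot H(\psi_\pi)$ for a uniquely determined subspace $W_\pi \subset B_\pi$. Applied to the intermediate $N$ (which is a $P$-$P$-bimodule since $1 \ot P \subset N$), this gives $N = \bigoplus_\pi W_\pi \ot H(\psi_\pi)$, and these $W_\pi$'s correspond bijectively to $G$-invariant subspaces $D_{\bar\pi} := W_\pi \ot H_{\bar\pi} \subset A_{\bar\pi}$; the sum $D := \bigoplus_\pi D_{\bar\pi}$ is then the unique $G$-invariant subspace of $A$ with $N = (D \ot M)^{\al \ot \si}$. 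A direct translation through the identification shows $N^* = N$ iff $D^* = D$, and $N \cdot N \subset N$ iff $D \cdot D \subset D$, so $N$ is a $*$-subalgebra iff $D$ is. The \emph{main obstacle} will be setting up the bimodule identification $(A_{\bar\pi} \ot L(\pi))^{\al \ot \si} \cong B_\pi \ot H(\psi_\pi)$ correctly---tracking the $G$-action, both $P$-actions, and the algebra structure simultaneously---and verifying that the multiplication on the left descends precisely to the $A$-multiplication on $D$ under the isotypic decomposition.
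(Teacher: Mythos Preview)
Your approach is correct and closely parallel to the paper's, but the packaging differs and the step you flag as the ``main obstacle'' is exactly where the paper supplies a short explicit argument that you should adopt.

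For Parts~1 and~2 you do essentially what the paper does. Your Fourier/Peter--Weyl argument for $\cZ((A\ot M)^{\al\ot\si})\subset \cZ(A)^\al\ot 1$ is a mild variant of the paper's route: the paper instead proves the spanning identity
\[
A=\lspan\{(\id\ot\omega)(x)\mid x\in (A\ot M)^{\al\ot\si},\ \omega\in M_*\}
\]
by observing that for every $X\in H_\pi^*\ot A$ with $(\id\ot\al_g)(X)=X(\pi(g)\ot 1)$ one has $X_{12}(V_\pi)^*_{13}\in H_\pi^*\ot(A\ot M)^{\al\ot\si}$, and then concludes centrality from that. Both arguments use the same input (the $V_\pi$'s witness every $\pi$) and are equivalent.

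For Part~3 the difference is organizational. You decompose $A$ into $\al$-isotypes and match $A_{\bar\pi}$ with $L(\pi)$ one $\pi$ at a time. The paper does this in one stroke: view $\al$ as a unitary representation $\pi_A:G\to\cU(A)$ (for a $G$-invariant inner product on $A$), pick a single unitary $W\in\B(A)\ot M$ with $(\id\ot\si_g)(W)=W(\pi_A(g)\ot 1)$, and set $\gamma(a)=W(1\ot a)W^*$. Then $a\mapsto Wa$ is a $P$-$P$-bimodular unitary $\rL^2((A\ot M)^{\al\ot\si})\to \bim{\gamma(P)}{(A\ot\rL^2(P))}{P}$, and any intermediate $N$ corresponds to a projection in $\gamma(P)'\cap(\B(A)\ot P)=W(\B(A)^{\Ad\pi_A}\ot 1)W^*$, hence to a $G$-invariant subspace $D\subset A$ with $N=(D\ot M)^{\al\ot\si}$. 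This is exactly your conclusion; the single $W$ just replaces the $\pi$-by-$\pi$ bookkeeping. One small point you use implicitly and should state: the $H(\psi_\pi)$ are pairwise inequivalent irreducible $P$-$P$-bimodules (by the full faithfulness of $\Rep(G)\to\Bimod(P)$), which is what guarantees that every $P$-$P$-subbimodule of $\bigoplus_\pi B_\pi\ot H(\psi_\pi)$ is diagonal.

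Where you stop---``$N\cdot N\subset N$ iff $D\cdot D\subset D$''---the paper gives a two-line argument that bypasses tracking Clebsch--Gordan maps. First, from the spanning identity above (applied with $D$ in place of $A$) one gets $D=\lspan\{(\id\ot\omega)(x):x\in N,\ \omega\in M_*\}$, whence $D^*=D$. For closure under products, take $X\in H_\pi^*\ot D$ and $Y\in H_\eta^*\ot D$ with the covariance property; then $X_{12}(V_\pi)^*_{13}$ and $Y_{12}(V_\eta)^*_{13}$ lie in $H_\pi^*\ot N$ and $H_\eta^*\ot N$, so their product
\[
X_{13}\,(V_\pi)^*_{14}\,Y_{23}\,(V_\eta)^*_{24}\in (H_\pi\ot H_\eta)^*\ot N\subset (H_\pi\ot H_\eta)^*\ot D\ot M,
\]
and since the two middle factors commute this forces $X_{13}Y_{23}\in(H_\pi\ot H_\eta)^*\ot D$, i.e.\ $D\cdot D\subset D$. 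Plugging this in completes your proof without having to ``set up the bimodule identification and track the algebra structure simultaneously.''
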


\begin{proof}
Denote $P := M^\si$. By minimality, the relative commutant of $1 \ot P$ inside $(A \ot M)^{\alpha \ot \sigma}$ equals $A^\al \ot 1$. So, the inclusion $1 \ot P \subset (A \ot M)^{\alpha \ot \sigma}$ is irreducible iff $A^\al = \C 1$. Also, $\cZ\bigl((A \ot M)^{\al \ot \si}\bigr) \subset A^\al \ot 1$.

We claim that
\begin{equation}\label{eq.nuttig}
A = \lspan \{ (\id \ot \om)(a) \mid a \in (A \ot M)^{\al \ot \si} \; , \; \om \in M_* \} \; .
\end{equation}
In order to prove this claim, let $\pi \in \irr(G)$, $X \in H_\pi^* \ot A$ and $(\id \ot \al_g)(X) = X(\pi(g) \ot 1)$ for all $g \in G$. Note that $A$ is the linear span of all possible $X (H_\pi \ot 1)$. On the other hand, $X_{12} (V_\pi)^*_{13}$ belongs to $H_\pi^* \ot (A \ot M)^{\al \ot \si}$, implying that $X(H_\pi \ot 1)$ is included in the expression at the right-hand side of \eqref{eq.nuttig}. This proves \eqref{eq.nuttig}.

A combination of the claim and the first paragraph of the proof implies that $\cZ\bigl((A \ot M)^{\al \ot \si}\bigr) = \cZ(A)^\al \ot 1$. Hence, $(A \ot M)^{\al \ot \si}$ is a factor iff $\cZ(A)^\al = \C 1$.

Let $1 \ot M^\si \subset N \subset (A \ot M)^{\alpha \ot \sigma}$ be an intermediate von Neumann algebra. Choose a $G$-invariant trace on $A$, so that we can view $A$ as a finite-dimensional Hilbert space. In this way, the action $\al$ can be seen as a unitary representation $\pi_A : G \recht \cU(A)$. Choose a unitary $W \in \B(A) \ot M$ satisfying $(\id \ot \si_g)(W) = W (\pi_A(g) \ot 1)$ for all $g \in G$. Define the finite index inclusion $\gamma : P \recht \B(A) \ot P : \gamma(a) = W(1 \ot a)W^*$. The map $a \mapsto Wa$ defines a $P$-$P$-bimodular unitary
$$\Theta : \bim{{(1 \ot P)}}{\rL^2\bigl((A \ot M)^{\al \ot \si}\bigr)}{(1 \ot P)} \recht \bim{{\gamma(P)}}{\bigl(A \ot \rL^2(P)\bigr)}{P} \; .$$
It follows that $\Theta(N) = q (A \ot \rL^2(P))$, where $q$ is a projection in $$\B(A) \ot P \cap \gamma(P)' = W(\B(A)^{\Ad \pi_A} \ot 1)W^* \; .$$ Write $q = W (p \ot 1) W^*$ and define the vector subspace $D \subset A$ as the image of the projection $p$. Since $p$ commutes with $\pi_A(G)$, it follows that $D$ is globally $\al$-invariant. We have shown that
$$N = (A \ot M)^{\al \ot \si} \cap (D \ot M) = (D \ot M)^{\al \ot \si} \; .$$
It remains to prove that $D$ is a $*$-algebra.

Since $D$ is globally $\al$-invariant, it follows that $D$ is linearly spanned by elements of the form $X(H_\pi \ot 1)$, where $\pi \in \irr(G)$, $X \in H_\pi^* \ot D$ and $(\id \ot \al_g)(X) = X(\pi(g) \ot 1)$ for all $g \in G$. As in \eqref{eq.nuttig}, it follows that $D$ is linearly spanned by $(\id \ot \om)(a)$, $\om \in M_*$ and $a \in N$. Hence, $D = D^*$.

Further, let $\pi,\eta \in \irr(G)$, $X \in H_\pi^* \ot D$, $Y \in H_\eta^* \ot D$ and $(\id \ot \al_g)(X) = X(\pi(g) \ot 1)$, $(\id \ot \al_g)(Y) = Y (\eta(g) \ot 1)$ for all $g \in G$. To conclude the proof of the lemma, it suffices to show that $X_{13} Y_{23} \in (H_\pi \ot H_\eta)^* \ot D$. But, we know that $X_{12} (V_{\pi})^*_{13} \in H_\pi^* \ot N$ and $Y_{12} (V_\eta)^*_{13} \in H_\eta^* \ot N$. Since $N$ is an algebra, it follows that
$$X_{13} \; (V_\pi)^*_{14} \; Y_{23} \; (V_\eta)^*_{24} \in (H_\pi \ot H_\eta)^* \ot N \subset (H_\pi \ot H_\eta)^* \ot D \ot M \; .$$
The two factors in the middle commute and the conclusion follows.
\end{proof}

We are now ready to prove Theorem \ref{thm.subfactors}.

\begin{proof}[Proof of Theorem \ref{thm.subfactors}]
Take $G \actson M$ as in the formulation of the theorem and put $P := M^\si$. Let $P_0 \subset P$ be a finite index subfactor. We first prove that $P_0$ is unitarily conjugate in $P$ to a subfactor of the form $P(\al)$ for some action $G \overset{\al}{\actson} A$ of $G$ on a finite dimensional von Neumann algebra $A$ satisfying $\cZ(A)^\al = \C 1$.

Let $P_0 \subset P \subset P_1$ be the basic construction. Then, $\bim{P}{\rL^2(P_1)}{P}$ is a finite index $P$-$P$-bimodule. By assumption, we find a finite dimensional unitary representation $\pi : G \recht \cU(n)$ and a unitary $V \in \M_n(\C) \ot M$ satisfying $(\id \ot \si_g)(V) = V (\pi(g) \ot 1)$ for all $g \in G$, such that
$$\bim{P}{\rL^2(P_1)}{P} \cong \bim{{\gamma(P)}}{\bigl(\C^n \ot \rL^2(P)\bigr)}{P} \quad\text{where}\quad \gamma(a) = V(1 \ot a) V^* \;\;\text{for all}\; a \in P \; .$$
The left $P_1$-action on $\rL^2(P_1)$ commutes with the right $P$-action and hence, we can extend $\gamma$ to an inclusion $\gamma : P_1 \recht \M_n(\C) \ot P$. Denote $N = V^* \gamma(P_1) V$ and write $\al(g) = \Ad(\pi(g))$. It follows that $1 \ot P \subset N \subset (\M_n(\C) \ot M)^{\al \ot \si}$. Applying Lemma \ref{lemma.intermediate}, we find a finite dimensional von Neumann algebra $A$, an action $G \overset{\al}{\actson} A$ satisfying $\cZ(A)^\al = \C 1$ and a $*$-isomorphism $\theta : P_1 \recht (A \ot M)^{\al \ot \si}$ satisfying $\theta(a) = 1 \ot a$ for all $a \in P$. By uniqueness of the tunnel construction, it follows that $P_0$ and $P(\al)$ are unitarily conjugate.

Finally, suppose that $G \overset{\alpha}{\actson} A$ and $G \overset{\beta}{\actson} B$ satisfy $\cZ(A)^\al = \C1$ and $\cZ(B)^\beta = 1$ and suppose that the subfactors $P(\al)$ and $P(\beta)$ are unitarily conjugate in $P$. It remains to construct a $*$-isomorphism $\pi : A \recht B$ satisfying $\beta_g \circ \pi = \pi \circ \al_g$ for all $g \in G$. By assumption, we find a $*$-isomorphism $\theta : (A \ot M)^{\al \ot \si} \recht (B \ot M)^{\be \ot \si}$ satisfying $\theta(1 \ot a) = 1 \ot a$ for all $a \in P$. Repeating the argument given in the proof of Lemma \ref{lemma.intermediate}, we find a bijective linear map $\gamma : A \recht B$ such that $\gamma \circ \al_g = \beta_g \circ \gamma$ for all $g \in G$ and $\theta(b) = (\gamma \ot \id)(b)$ for all $b \in (A \ot M)^{\al \ot \si}$. By \eqref{eq.nuttig}, it follows that $\gamma$ is a $*$-isomorphism.
\end{proof}

\end{document}